\DeclareMathAlphabet{\mathpzc}{OT1}{pzc}{m}{it}
\begin{document}


\newtheorem{proposition}{Proposition}[section]
\newtheorem{theorem}[proposition]{Theorem}
\newtheorem{corollary}[proposition]{Corollary}
\newtheorem{lemma}[proposition]{Lemma}
\newtheorem{conjecture}[proposition]{Conjecture}
\newtheorem{question}[proposition]{Question}
\newtheorem{definition}[proposition]{Definition}
\newtheorem{algorithm}[proposition]{Algorithm}
\newtheorem{assumption}[proposition]{Assumption}
\newtheorem{condition}[proposition]{Condition}
\renewcommand{\thefootnote}{\color{red}\arabic{footnote}}
\newcommand{\esquare}{\begin{flushright}$\square$\end{flushright}}
\numberwithin{equation}{section}
\numberwithin{proposition}{section}
\renewcommand{\theenumi}{\roman{enumi}}


%
%
\newcommand{\skp}{\vspace{\baselineskip}}
\newcommand{\noi}{\noindent}
\newcommand{\osc}{\mbox{osc}}
\newcommand{\lfl}{\lfloor}
\newcommand{\rfl}{\rfloor}

\theoremstyle{remark}
\newtheorem{example}{\bf Example}[section]
\newtheorem{remark}{\bf Remark}[section]

\newcommand{\img}{\imath}
\newcommand{\iy}{\infty}
\newcommand{\eps}{\epsilon}
\newcommand{\veps}{\varepsilon}
\newcommand{\del}{\delta}
\newcommand{\Rk}{\mathbb{R}^k}
\newcommand{\RR}{\mathbb{R}}
\newcommand{\spa}{\vspace{.2in}}
\newcommand{\V}{\mathcal{V}}
\newcommand{\E}{\mathbb{E}}
\newcommand{\I}{\mathbb{I}}
\newcommand{\PP}{\mathbb{P}}
\newcommand{\sgn}{\mbox{sgn}}
\newcommand{\ti}{\tilde}

\newcommand{\QQ}{\mathbb{Q}}

\newcommand{\XX}{\mathbb{X}}
\newcommand{\XXz}{\mathbb{X}^0}

\newcommand{\lan}{\langle}
\newcommand{\ran}{\rangle}
\newcommand{\lf}{\lfloor}
\newcommand{\rf}{\rfloor}
\def\wh{\widehat}
\newcommand{\defn}{\stackrel{def}{=}}
\newcommand{\txb}{\tau^{\epsilon,x}_{B^c}}
\newcommand{\tyb}{\tau^{\epsilon,y}_{B^c}}
\newcommand{\tilxb}{\tilde{\tau}^\eps_1}
\newcommand{\pxeps}{\mathbb{P}_x^{\eps}}
\newcommand{\non}{\nonumber}
\newcommand{\dist}{\mbox{dist}}

\newcommand{\Om}{\mathnormal{\Omega}}
\newcommand{\om}{\omega}
\newcommand{\vph}{\varphi}
\newcommand{\Del}{\mathnormal{\Delta}}
\newcommand{\Gam}{\mathnormal{\Gamma}}
\newcommand{\Sig}{\mathnormal{\Sigma}}

\newcommand{\tilyb}{\tilde{\tau}^\eps_2}
\newcommand{\beq}{\begin{eqnarray*}}
\newcommand{\eeq}{\end{eqnarray*}}
\newcommand{\beqn}{\begin{eqnarray}}
\newcommand{\eeqn}{\end{eqnarray}}
\newcommand{\ink}{\rule{.5\baselineskip}{.55\baselineskip}}

\newcommand{\bt}{\begin{theorem}}
\newcommand{\et}{\end{theorem}}
\newcommand{\deps}{\Del_{\eps}}
\newcommand{\dbl}{\mathbf{d}_{\tiny{\mbox{BL}}}}

\newcommand{\be}{\begin{equation}}
\newcommand{\ee}{\end{equation}}
\newcommand{\bes}{\begin{equation*}}
\newcommand{\ees}{\end{equation*}}
\newcommand{\ba}{\begin{aligned}}
\newcommand{\ea}{\end{aligned}}
\newcommand{\ac}{\mbox{AC}}
\newcommand{\BB}{\mathbb{B}}
\newcommand{\VV}{\mathbb{V}}
\newcommand{\DD}{\mathbb{D}}
\newcommand{\KK}{\mathbb{K}}
\newcommand{\HH}{\mathbb{H}}
\newcommand{\TT}{\mathbb{T}}
\newcommand{\CC}{\mathbb{C}}
\newcommand{\ZZ}{\mathbb{Z}}
\newcommand{\SSS}{\mathbb{S}}
\newcommand{\EE}{\mathbb{E}}
\newcommand{\NN}{\mathbb{N}}

\newcommand{\clg}{\mathcal{G}}
\newcommand{\clb}{\mathcal{B}}
\newcommand{\cls}{\mathcal{S}}
\newcommand{\clc}{\mathcal{C}}
\newcommand{\clj}{\mathcal{J}}
\newcommand{\clm}{\mathcal{M}}
\newcommand{\clx}{\mathcal{X}}
\newcommand{\cld}{\mathcal{D}}
\newcommand{\cle}{\mathcal{E}}
\newcommand{\clv}{\mathcal{V}}
\newcommand{\clu}{\mathcal{U}}
\newcommand{\clr}{\mathcal{R}}
\newcommand{\clt}{\mathcal{T}}
\newcommand{\cll}{\mathcal{L}}
\newcommand{\clz}{\mathcal{Z}}
\newcommand{\clq}{\mathcal{Q}}
\newcommand{\clo}{\mathcal{O}}

\newcommand{\cli}{\mathcal{I}}
\newcommand{\clp}{\mathcal{P}}
\newcommand{\cla}{\mathcal{A}}
\newcommand{\clf}{\mathcal{F}}
\newcommand{\clh}{\mathcal{H}}
\newcommand{\N}{\mathbb{N}}
\newcommand{\Q}{\mathbb{Q}}
\newcommand{\bfx}{{\boldsymbol{x}}}
\newcommand{\bfh}{{\boldsymbol{h}}}
\newcommand{\bfs}{{\boldsymbol{s}}}
\newcommand{\bfm}{{\boldsymbol{m}}}
\newcommand{\bff}{{\boldsymbol{f}}}
\newcommand{\bfb}{{\boldsymbol{b}}}
\newcommand{\bfw}{{\boldsymbol{w}}}
\newcommand{\bfz}{{\boldsymbol{z}}}
\newcommand{\bfu}{{\boldsymbol{u}}}
\newcommand{\bfell}{{\boldsymbol{\ell}}}
\newcommand{\bfn}{{\boldsymbol{n}}}
\newcommand{\bfd}{{\boldsymbol{d}}}
\newcommand{\bfbeta}{{\boldsymbol{\beta}}}
\newcommand{\bfzeta}{{\boldsymbol{\zeta}}}
\newcommand{\bfnu}{{\boldsymbol{\nu}}}

\newcommand{\bfl}{{ L}}
\newcommand{\bfc}{{C}}
\newcommand{\bfg}{{\bf G}}
\newcommand{\bfi}{{1}}
\newcommand{\bfk}{{\bf k}}
\newcommand{\bfDD}{{D}}

\newcommand{\bft}{{\bf T}}

\newcommand{\hatq}{\hat{Q}}
\newcommand{\hata}{\hat{A}}
\newcommand{\hatd}{\hat{D}}
\newcommand{\hats}{\hat{S}}
\newcommand{\hatx}{\hat{X}}
\newcommand{\haty}{\hat{Y}}
\newcommand{\hati}{\hat{I}}
\newcommand{\hatw}{\hat{W}}
\newcommand{\bard}{\bar{D}}
\newcommand{\barq}{\bar{Q}}
\newcommand{\bara}{\bar{A}}
\newcommand{\bars}{\bar{S}}
\newcommand{\bari}{\bar{I}}

\newcommand{\bart}{\bar{T}}
\newcommand{\Go}{\Rightarrow}

\newcommand{\curvz}{{\bf \mathpzc{z}}}
\newcommand{\curvx}{{\bf \mathpzc{x}}}
\newcommand{\curvi}{{\bf \mathpzc{i}}}
\newcommand{\curvs}{{\bf \mathpzc{s}}}
\newcommand{\blip}{\mathbb{B}_1}

\newcommand{\BM}{\mbox{BM}}

\newcommand{\tac}{\mbox{\scriptsize{AC}}}

\newcommand{\bfA}{{\boldsymbol{A}}}
\newcommand{\bfB}{{\boldsymbol{B}}}
\newcommand{\bfC}{{\boldsymbol{C}}}
\newcommand{\bfD}{D}
\newcommand{\bfG}{{\boldsymbol{G}}}
\newcommand{\bfH}{{\boldsymbol{H}}}
\newcommand{\vt}{\Theta}
\newcommand{\dmnk}{\mathcal{D}^{n,k}_m}
\newcommand{\hmnk}{\mathcal{H}^{n,k}_m}
%


\begin{frontmatter}
\title{ Construction of Asymptotically Optimal Control for a Stochastic Network from a Free Boundary Problem
}

 \runtitle{Asymptotically Optimal Control.}

\begin{aug}
\author{Amarjit Budhiraja, Xin Liu and Subhamay Saha\\ \ \\
}
\end{aug}

\today

\skp

\begin{abstract}
\noi
An asymptotic framework for optimal control of multiclass stochastic processing networks, using formal
diffusion approximations under suitable temporal and spatial scaling, by Brownian control problems (BCP)
and their equivalent workload formulations (EWF), has been developed by Harrison (1988). This  framework has been implemented in many works for 
constructing asymptotically optimal control policies for a broad range of stochastic network models.  To date all asymptotic optimality results for such networks correspond to settings where the solution of the EWF is a reflected Brownian motion in the positive orthant with normal reflections. In this work we consider a well studied stochastic network which is perhaps the
simplest example of a model with more than one dimensional workload process. In the regime considered here, the singular control
problem corresponding to the EWF does not have a simple form explicit solution, however by considering an associated free boundary problem one can give a representation for an optimal controlled process as a two dimensional reflected Brownian motion
in a Lipschitz domain whose boundary is determined by the solution of the free boundary problem. Using the form of the optimal solution we propose a sequence of control policies, given in terms of suitable thresholds, for the scaled stochastic network control problems and prove that this sequence of policies is asymptotically optimal. As suggested by the solution of the EWF, the policy we propose requires a server to idle under certain conditions which are specified in terms of the thresholds determined from the free boundary.
\\ \ \\

\noi {\bf AMS 2010 subject classifications:} Primary 60K25, 68M20, 90B36; secondary 60J70.\\ \ \\

\noi {\bf Keywords:} Stochastic networks, dynamic control,  heavy traffic, diffusion approximations, Brownian control problems,
singular control problems, reflected Brownian motions, free boundary problems, threshold policies, large deviations.
\end{abstract}

\end{frontmatter}

%
%
%
%
%
%
%
%
%
%

\section{Introduction}

Stochastic processing networks arise commonly in manufacturing, computer and communication systems.
Optimal regulation  is a key  objective in the design of such networks. Construction and implementation of an optimal control
can in general be challenging due to complex dynamics,  lack of simple Markovian state descriptors, and high frequency and throughput characteristics. An approach pioneered by Harrison \cite{harrison88} is to approximate the control problems for such complex networks, when the system is in {\em heavy traffic}, through certain control problems for Brownian motions.
These Brownian control problems (BCP) are quite non-standard in that the control processes may not even have bounded variation sample paths.  A key result of Harrison-van Mieghem \cite{HarVan} says that in quite general settings there are {\em equivalent
workload formulations} (EWF) of such BCP which correspond to  more tractable control problems.  In the EWF, controls are bounded variation processes and thus these problems fall within the classical framework of singular stochastic control, although here one has the
additional feature of state constraints in non-smooth domains (typically the state space is a convex polyhedral cone). Furthermore, in many examples the EWF is of much lower dimension than the original BCP, thus providing significant model simplification. In recent years there have been many works
that have developed asymptotically optimal control policies for a range of network models by analyzing the solutions of the associated BCP and EWF \cite{bellwill01, Kum, Har2, bellwill05, DaiLin, BudLiu}. 
Specifically, these works consider a sequence of control problems, indexed by a parameter $n$, for the underlying network
such that as $n$ becomes large the traffic intensity approaches criticality. An asymptotically optimal control is a sequence of
policies $\{T_n^*\}$ such that the (scaled) cost when $T_n^*$ is used in the $n$-th network is asymptotically the lowest that is achievable among all such control sequences.
One simplifying feature of all the models in the above papers is that the associated EWF is a one dimensional singular
control problem in $\mathbb{R}_+$ with a monotonic cost function. Such control problems have a simple solution given through
a one dimensional reflected Brownian motion and this explicit form plays a key role in the proofs.  

In \cite{AA} a well known 
queuing system with three buffers and two stations (see Figure 1), in heavy traffic, for which the associated EWF is a two dimensional singular control problem has been analyzed. This is perhaps the simplest non-trivial model with more than one-dimensional workload process.  The model, referred to in the literature as the {\em crisscross network}, has been previously analyzed in \cite{HarWei, YCY, MSS, KusMar2}. The network is of interest in its own right, but its analysis also gives insight for large networks with bottleneck sub-systems that have similar features as the criss-cross network. A brief description of the network is as follows -- there are $3$ classes of customers (corresponding to $3$ buffers) and $2$ servers; for $k=1,2,$ customers of Class $k$ arrive according to a renewal process  and receive service at Station $1$; Class $1$ customers leave the system once their service is completed; Class $2$ customers after being processed at Station $1$ proceeds to Station $2$, where they are re-designated as Class $3$ customers and get processed at Station 2 after which they leave the system. Precise descriptions
of the control problem and the cost criterion are given in Sections \ref{sec:schcont} and \ref{sec:netcont} respectively.
The form of an optimal control for this network depends on the underlying parameters, in particular on the 
strictly positive  holding cost vector $(c_1, c_2, c_3)$ and the (asymptotic) service rate vector $(\mu_1, \mu_2, \mu_3)$, and in general can be quite complex. Indeed the paper \cite{MSS} discusses two distinct regimes where the  structure of optimal control policies are expected to be quite different.   These regimes are as follows.  Case I:  $c_1\mu_1-c_2\mu_2+c_3\mu_2 \le 0$, and Case II:  $c_1\mu_1-c_2\mu_2+c_3\mu_2 > 0$. The paper \cite{MSS} further differentiates 
Case II into four sub-cases: Case IIA: $c_2\mu_2 - c_3\mu_2 \ge 0,\,\, c_2\mu_2-c_1\mu_1\geq 0$;
 Case IIB: $c_2\mu_2 - c_3\mu_2 <0,\,\, c_2\mu_2-c_1\mu_1\geq 0$;
 Case IIC: $c_2\mu_2 - c_3\mu_2 \ge 0,\,\, c_2\mu_2-c_1\mu_1< 0$;
 Case IID: $c_2\mu_2 - c_3\mu_2 <0,\,\, c_2\mu_2-c_1\mu_1 < 0$. 

Case I is the simplest to analyze and it  has been shown in \cite{YCY} that a simple priority policy (server 1 always gives priority to Buffer 1) is (asymptotically) optimal. Asymptotically optimal control policies for Case IIA have been constructed in \cite{MSS,AA}. In this case using certain monotonicity properties one can show that the optimal state process for the singular control problem is a two dimensional
reflected Brownian motion in $\RR_+^2$ with normal reflections.  Furthermore,  using results of \cite{HarVan} an explicit solution of the BCP can be given as well. The proposed policies and the proof of asymptotic optimality in \cite{MSS} and \cite{AA} are quite different -- the first paper uses quite technical machinery from viscosity solution analysis of Hamilton-Jacobi-Bellman(HJB) equations whereas the paper \cite{AA} proceeds by constructing a multiple threshold policy for which the
associated state process closely mimics the solution of the BCP. 

Cases IIB, IIC and IID, to date have remained unsolved.  One of the key obstacles in their analysis has been that in these regimes the singular control problem does not admit a simple form solution.  Indeed, although Harrison's framework has provided asymptotically optimal control policies for a broad range of models, all of the available results correspond to settings where the
solution of the EWF is a reflected Brownian motion in the positive orthant (with normal reflections). In \cite{AK} the authors studied the singular control problem that corresponds to the EWF for Cases IIB and IIC.  Typically, solutions of singular control problems are given in terms of an open set $\clo$ in the state space, referred to as the {\em continuation region}, such that starting within $\clo$ no control is applied until the boundary of $\clo$
is reached; if the initial condition is in $\bar{(\clo)^c}$ ({\em action region}), an instantaneous control in a pre-specified direction is applied to bring the state to $\partial \clo$; and once in $\bar \clo$, the state process is constrained in the set by suitable reflection at $\partial \clo$.  In terms of the associated HJB equation, in $\clo$ the value function satisfies a linear elliptic PDE while in $\clo^c$ a nonlinear first order PDE is satisfied; the boundary $\partial \clo$ separating these two regions is referred to as the {\em free boundary} for the system of PDE and determining this boundary is called a {\em free boundary problem}.  In \cite{AK} it was shown
that the value function $J^*$ of the control problem is $\mathcal{C}^1$ up to the boundary and the continuation region for the optimal control is given as 
$G = \{x\in \RR^2_+: x_1 \ge \Psi(x_2)\}$ where $\Psi: \RR_+ \to \RR_+$ is a Lipschitz non-decreasing function given as 
\begin{align}\label{imfcn}
\Psi(w_2)\doteq \sup\{w_1\geq 0:\partial_{w_1}J^*(w_1,w_2)=0\}\,.
\end{align}
One novel feature of this result is that here the principle of smooth fit ideas that have been used in previous works \cite{BSW, HarTak, ShSo1, ShSo2} are not applicable and in fact $C^2$-regularity of the value functions -- a crucial ingredient in these works -- is not available. The paper \cite{AK} constructs an optimal controlled process as a reflected Brownian motion in $G$ reflected at $\partial G = \partial_1G \cup \partial_2G$, where the
direction of reflection is $e_2 = (0,1)'$ on $ \partial_2G = \{x \in \RR_+^2: x\cdot e_2 =0\}$ and is $e_1=(1,0)'$ on $ \partial_1G = \{x \in \RR_+^2:\Psi(x_2)=x_1\}$.   The solution is not altogether explicit since in order
to determine $\Psi$ one need the value function of the control problem, however numerical methods for computing the
free boundary $\Psi$ are available \cite{MutKum2, KusMar, BudRos}.

The goal of this work is to use the solution of the free boundary problem from \cite{AK} (given by the function $\Psi$) to obtain an asymptotically optimal control policy for the corresponding crisscross network in regime IIB and IIC.
We only study Case IIB here since treatment of the other case is expected to be quite similar.   Recall that Case IIB corresponds to $ c_2\mu_2 - c_3\mu_2 <0,\,\, c_2\mu_2-c_1\mu_1\geq 0$ (note that it implies
$c_1\mu_1-c_2\mu_2+c_3\mu_2 >0$).
In this case serving Class $1$ customers reduces  cost of jobs in Buffer 1 at an (asymptotic) average rate of $c_1\mu_1$.
Also serving Class $2$ customers  reduces  cost of jobs in Buffer 2 at an (asymptotic) average rate of $c_2\mu_2$ and at the same time  increases  cost at an (asymptotic) average rate of $c_3\mu_2$ for Buffer $3$. The  condition $c_1\mu_1> c_2\mu_2-c_3\mu_2$ implies that overall cost is reduced at a higher rate if Server 1 processes Class $1$ customers instead
of Class $2$ customers.
The second condition $c_2\mu_2 < c_3\mu_2$  says that it is cheaper to keep jobs in Buffer $2$ than in Buffer $3$.
 The third condition $c_2\mu_2\ge c_1\mu_1$ means the  cost from the queues processed by Server 1 is reduced more rapidly
if jobs in Buffer 2 are processed.
The first condition suggests that a  priority policy that favors Class 1 customers should be used, however the third condition says that the minimization of immediate workload at Server 1 will be achieved by processing Class 2 customers.
Also, always giving high priority to Class 1 customers may lead to an undesirable underutilization of Server 2. Thus an optimal policy needs to suitably balance these opposing considerations. Additionally, even if there are jobs in the second queue (but say no jobs in Queue 1) it may be preferable for Server 1 to idle since holding costs in Queue 3 are higher than that in Queue 2.
Thus an optimal control is not expected to be a non-idling policy.  In Section \ref{sec:netcont} we  describe our proposed policy that suitably takes into account the various complex features of this parameter regime.
Furthermore (as in \cite{AA}) the policy is designed so that the associated state process closely mimics the solution of the BCP given in Section \ref{BCP}. The policy we propose will require Server 1 to idle under certain circumstances which are specified in terms of a threshold determined from the free boundary $\Psi$. 

For asymptotic optimality we will assume that the inter-arrival and service times have  finite moment generating functions in a neighborhood of $0$ (unlike \cite{AA} we do not assume these random variables to be Exponential).
These conditions allow the use of certain large deviation estimates that are key in the proof of asymptotic optimality.  Such large deviation techniques for obtaining asymptotically optimal control policies for stochastic networks were first introduced by Bell-Williams\cite{bellwill01} and later also used in \cite{Har2, AA, bellwill05}. Our main result is Theorem \ref{main-result} which under Assumptions \ref{rates-c}, \ref{htc}, \ref{ass:ass3} and \ref{regime-c} proves the asymptotic optimality of the control policy in Definition \ref{propolicy} with a suitable choice of threshold parameters $c, l_0, g_0$. This result treats Case IIB but, as noted earlier,  Case IIC can be treated in a similar fashion. Treatment of Case IID is  a challenging open problem. For this case even the solutions of the associated EWF and the BCP are currently unavailable. We have the following conjecture for the form of the solution to the EWF, which if resolved will be a key step forward in the construction of asymptotically optimal control policies in regime IID.
\begin{conjecture}
	\label{conj:conj1}
Let $B$ be the two dimensional Brownian motion on a filtered probability space $(\Om, \clf,$ $ \{\clf_t\}_{t\ge 0}, \PP)$  as in Definition \ref{def:ewf} and consider the EWF where
$\hat h$ is as in \eqref{LP-value} and $(\mu_1, \mu_2, \mu_3)$, $(c_1,c_2,c_3)$ are as in Case IID. Then there exist functions $\Psi_i: \RR_+ \to \RR_+$, $i=1,2$,
that are Lipschitz, strictly increasing, and $\Psi_i(x) \to \infty$ as $x \to \infty$, such that
there is a unique pair of $\{\clf_t\}$ adapted continuous processes $W_1^*, W_2^*$ with values in $\RR_+$ that solve
\begin{equation}
\begin{aligned}\label{workload-general-solu-conj}
{W}^*_1(t)&= B_1(t) + \sup_{0\leq s \leq t}\left[B_1(s)-\Psi_1({W}^*_2(s))\right]^-\,,\\
{W}^*_2(t)&= B_2(t)+\sup_{0\leq s \leq t}\left[B_2(s)-\Psi_2({W}^*_1(s))\right]^-\, ,
\end{aligned}
\end{equation}
and $I_1^*(t) = \sup_{0\leq s \leq t}\left[B_1(s)-\Psi_1({W}^*_2(s))\right]^-$,
$I_2^*(t) = \sup_{0\leq s \leq t}\left[B_2(s)-\Psi_2({W}^*_1(s))\right]^-$ is an optimal control for the EWF.
\end{conjecture}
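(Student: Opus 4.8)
\emph{A possible approach.} The natural plan is to run the program of \cite{AK} in the two-dimensional free-boundary geometry that should govern Case IID. Let $J^*$ denote the value function of the EWF from Definition \ref{def:ewf}, with cost $\hat h$ as in \eqref{LP-value} and parameters as in Case IID, and consider the associated Hamilton--Jacobi--Bellman variational inequality on $\RR_+^2$, of the schematic form
\[
\min\Big\{\, \mathcal{L}J^* - \hat h,\ \ \partial_{w_1}J^*,\ \ \partial_{w_2}J^* \,\Big\}=0\,,
\]
where $\mathcal{L}$ is the generator of the driving Brownian motion $B$ (together with the zeroth-order term corresponding to the cost in Definition \ref{def:ewf}), supplemented by the appropriate conditions on $\{w_1=0\}$ and $\{w_2=0\}$. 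The structural conditions of Case IID ($c_2\mu_2-c_3\mu_2<0$ and $c_2\mu_2-c_1\mu_1<0$) should make \emph{both} gradient constraints active: the state space splits into a \emph{continuation region} $G$ on which $\mathcal{L}J^*=\hat h$, and \emph{two} distinct action regions on which $\partial_{w_1}J^*=0$ and $\partial_{w_2}J^*=0$, respectively. This is the essential new difficulty compared with Cases IIB/IIC, where exactly one curved free boundary appeared. The first and hardest block of work is an existence-and-regularity theorem: $J^*$ is convex and $\mathcal{C}^1$ up to the boundary. As emphasized in \cite{AK}, the $\mathcal{C}^2$ smooth-fit theory does not apply, so one would adapt the penalization / obstacle-problem arguments of \cite{AK} to treat the two obstacles $\{\partial_{w_1}J^*\ge0\}$ and $\{\partial_{w_2}J^*\ge0\}$ simultaneously, including the analysis near the corner where the two action regions may meet.

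Once $\mathcal{C}^1$ regularity is available, define, in analogy with \eqref{imfcn},
\[
\Psi_1(w_2)\doteq\sup\{w_1\ge0:\partial_{w_1}J^*(w_1,w_2)=0\}\,,\qquad
\Psi_2(w_1)\doteq\sup\{w_2\ge0:\partial_{w_2}J^*(w_1,w_2)=0\}\,,
\]
so that $G=\{(w_1,w_2)\in\RR_+^2: w_1\ge\Psi_1(w_2),\ w_2\ge\Psi_2(w_1)\}$, with reflection direction $e_1=(1,0)'$ along $\partial_1 G=\{w_1=\Psi_1(w_2)\}$ and $e_2=(0,1)'$ along $\partial_2 G=\{w_2=\Psi_2(w_1)\}$. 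The three asserted properties of $\Psi_i$ then reduce to properties of $J^*$: monotonicity of $\Psi_i$ follows from a comparison/monotonicity argument for the gradient of $J^*$ powered by the monotonicity of $\hat h$ in Case IID; \emph{strict} monotonicity together with $\Psi_i(x)\to\infty$ as $x\to\infty$ follows from a coercivity estimate showing that for $w$ large the running cost is strictly decreasing in each coordinate, so that applying control is never optimal far out, combined with a transversal non-degeneracy of $\partial_{w_i}J^*$ across $\partial_i G$; and Lipschitz continuity of $\Psi_i$ follows from an interior-cone/barrier argument at the free boundary along the lines of \cite{AK}, again bypassing $\mathcal{C}^2$ bounds. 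A quantitative by-product that one should aim to extract here is a bound on the Lipschitz constants, ideally $\mathrm{Lip}(\Psi_1)\cdot\mathrm{Lip}(\Psi_2)<1$, which the geometry of the crisscross workload cone should supply and which is needed in the next step.

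It remains to construct the optimal controlled process and to verify its optimality. Since the reflection directions are the coordinate directions and each boundary piece of the Lipschitz domain $G$ is the graph of a Lipschitz function, well-posedness of \eqref{workload-general-solu-conj} can be approached by the Picard iteration it itself suggests: starting from a guess $W_2^{(0)}$, solve the one-dimensional Skorokhod problem for $W_1^{(k+1)}$ against the moving barrier $\Psi_1(W_2^{(k)}(\cdot))$, then symmetrically obtain $W_2^{(k+1)}$ from the barrier $\Psi_2(W_1^{(k+1)}(\cdot))$. The Lipschitz property of the one-dimensional Skorokhod map, composed with the bound $\mathrm{Lip}(\Psi_1)\,\mathrm{Lip}(\Psi_2)<1$, should make this a contraction in the sup-norm over each finite time horizon, yielding pathwise existence and uniqueness of $(W_1^*,W_2^*)$, and hence of $(I_1^*,I_2^*)$; the unboundedness of $G$ causes no difficulty. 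For optimality, apply It\^o's formula to $J^*(W^*(t))$ (incorporating the discount factor if present): inside $G$ one uses $\mathcal{L}J^*=\hat h$, and the increments $dI_i^*$ are supported on $\partial_i G$, where $\partial_{w_i}J^*=0$, so every boundary contribution vanishes and the cost incurred by $(I_1^*,I_2^*)$ equals $J^*$ evaluated at the initial workload; the matching lower bound for an arbitrary admissible control follows because the variational inequality makes $J^*$ a supersolution. I expect the dominant obstacle to be the two-free-boundary regularity program --- establishing $\mathcal{C}^1$ regularity of $J^*$ and Lipschitz regularity of $\Psi_1$ and $\Psi_2$ simultaneously when the two action regions can be adjacent, and in particular pinning down the contraction constant $\mathrm{Lip}(\Psi_1)\,\mathrm{Lip}(\Psi_2)<1$. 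Once these analytic facts are in hand, the construction of the reflected Brownian motion in $G$ and the verification argument are close adaptations of the corresponding steps in \cite{AK}.
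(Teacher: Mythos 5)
This is a \emph{conjecture} in the paper, not a theorem: the authors explicitly state that even the solution of the EWF in Case IID is currently unavailable, and they record Conjecture \ref{conj:conj1} as an open problem whose resolution ``will be a key step forward.'' There is therefore no proof in the paper against which to compare your attempt; what you have written is a plausible research program, and you are candid about where the difficulties lie. Your overall plan --- formulate the two-obstacle HJB variational inequality, establish $\mathcal{C}^1$ regularity of $J^*$ by penalization, define $\Psi_1,\Psi_2$ via the analogue of \eqref{imfcn}, build $W^*$ by a Picard iteration for the coupled Skorohod problems, and verify optimality by It\^o --- is indeed the natural extension of \cite{AK} and matches the structure the authors appear to have in mind.

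One concrete gap worth flagging in the fixed-point step: you need $\mathrm{Lip}(\Psi_1)\cdot\mathrm{Lip}(\Psi_2)<1$, and you state this as something ``the geometry of the crisscross workload cone should supply.'' But the only Lipschitz bound available from the Case IIB analysis (Lemma \ref{imp-fcn}, i.e.\ Lemma 5.1 of \cite{AK}) is $\mathrm{Lip}(\Psi_1)\le \mu_3/\mu_2$, which comes from the LP-cone constraint $0\le\Psi_1(w_2)\le(\mu_3/\mu_2)w_2$; the symmetric constraint in Case IID would give $\mathrm{Lip}(\Psi_2)\le\mu_2/\mu_3$, and the product of these a priori bounds is exactly $1$, not strictly less than $1$. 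So the contraction is not free: either you must prove a strictly sharper bound on at least one of the two Lipschitz constants (which presupposes the two-boundary regularity theory you have yet to build), or you must replace the global sup-norm contraction by a different well-posedness mechanism --- e.g.\ a localized iteration on short time intervals exploiting continuity of $W^*$, a weighted-norm argument, or a direct appeal to existence/uniqueness results for constrained processes in piecewise-smooth domains with oblique (here, coordinate) reflection. As written, the uniqueness claim in \eqref{workload-general-solu-conj} is asserted but not yet justified, and this is in addition to the regularity program you already identify as the dominant obstacle.
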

Note that Case IIB corresponds to a setting where $\Psi_1 = \Psi$, with $\Psi$ as in \eqref{imfcn}, and $\Psi_2 =0$.

The paper is organized as follows.  In Section \ref{sec:qnet} we present the queuing network considered in this work. We also introduce our main assumptions on the arrival and service processes.  In Section \ref{sec:netcont} we introduce our scheduling policy and state the main result which gives asymptotic optimality of the policy with suitable choices of threshold parameters.
Section \ref{BCP} reviews results from \cite{AK}, in particular we present the solution of the Brownian control problem associated
with the network from Section \ref{sec:qnet}.  Section \ref{sec:sec5} contains the proof of our main result: Theorem \ref{main-result}.  Key steps in the proof of the theorem are contained in  Theorem \ref{impthm1}, Theorem \ref{impthm2}
and Lemma \ref{lower-bound-est}, the proofs of which are given in Section \ref{impthms}.  
Theorems \ref{impthm1} and \ref{impthm2} are the most technical parts of the paper.  The statement of these results are in the same vein as Theorems 4.8 and 4.9 in \cite{AA}.  However these latter results assume that the interarrival and service times are exponential and the proofs in the general distribution case treated here are substantially more technical.  In particular, the proof of Theorem 4.9 in \cite{AA} relies on  sample path large deviation estimates for Poisson processes, whereas in the proof of Theorem \ref{impthm2}, we   use fixed time large deviation estimates (as in Lemma \ref{LDP-renewal}) for renewal processes.
Finally the appendix summarizes some elementary facts about the one dimensional Skorohod map that are appealed to in our proofs.

The following notation will be used. For a Polish space $S$, $D([0,\infty):S)$ will denote the space of right continuous functions with left limits (RCLL) from $[0,\infty)$ to $S$ equipped with the usual Skorohod topology. Define
$\cld_1 = \{f \in D([0,\infty):\RR): f(0) \ge 0\}$.  All stochastic processes in this work will have RCLL sample paths.
A stochastic process $X$ with values in $S$ will be regarded as a random variable with values in 
$D([0,\infty):S)$.
Convergence in distribution of $S$ valued random variables $X^n$ to $X$ will be denoted as $X^n \Rightarrow X$.
A sequence $X^n$ of processes with sample paths in $D([0,\infty):S)$ is said to be $C$-tight if the corresponding sequence of probability laws is relatively compact (in the usual weak convergence topology) and any limit point is supported  on the space of $S$ valued continuous functions.

\section{Queueing Network Model}
\label{sec:qnet}

\subsection{Network structure}

Consider a sequence of  networks indexed by  $n\in \N$ of the form in Figure 1. The $n$th network consists of $3$ classes of customers (corresponding to $3$ buffers) and $2$ servers. For $k=1,2,$ customers of Class $k$ arrive according to a renewal process with rate $\lambda^n_k$ and receive service at Station $1$. Class $1$ customers leave the system once their service is completed. Class $2$ customers after being served at Station $1$ proceed to Station $2$, where they are re-designated as Class $3$ customers and get served at Station 2. The service rates for these $3$ classes of customers are denoted by $\mu^n_j, j=1,2,3.$ Within each class, customers are processed using the First-Come-First-Served discipline.  
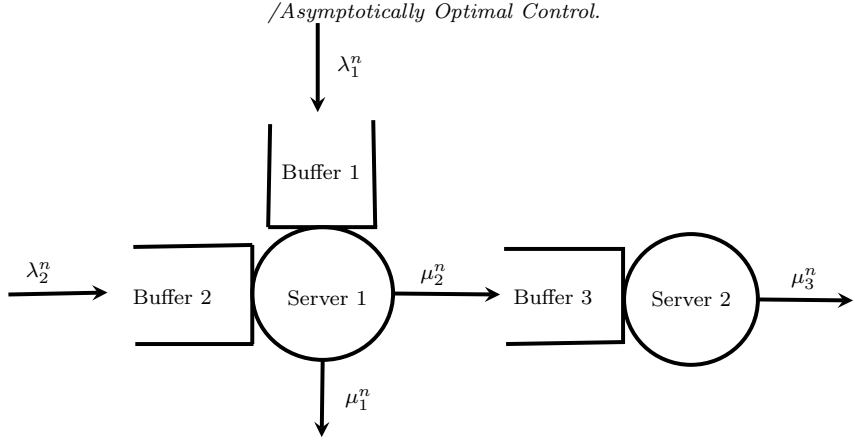
\begin{figure}
\ifx\du\undefined
  \newlength{\du}
\fi
\setlength{\du}{15\unitlength}
\begin{tikzpicture}
\pgftransformxscale{1.000000}
\pgftransformyscale{-1.000000}
\definecolor{dialinecolor}{rgb}{0.000000, 0.000000, 0.000000}
\pgfsetstrokecolor{dialinecolor}
\definecolor{dialinecolor}{rgb}{1.000000, 1.000000, 1.000000}
\pgfsetfillcolor{dialinecolor}
\definecolor{dialinecolor}{rgb}{1.000000, 1.000000, 1.000000}
\pgfsetfillcolor{dialinecolor}
\pgfpathellipse{\pgfpoint{19.925000\du}{11.575000\du}}{\pgfpoint{1.775000\du}{0\du}}{\pgfpoint{0\du}{1.675000\du}}
\pgfusepath{fill}
\pgfsetlinewidth{0.100000\du}
\pgfsetdash{}{0pt}
\pgfsetdash{}{0pt}
\pgfsetmiterjoin
\definecolor{dialinecolor}{rgb}{0.000000, 0.000000, 0.000000}
\pgfsetstrokecolor{dialinecolor}
\pgfpathellipse{\pgfpoint{19.925000\du}{11.575000\du}}{\pgfpoint{1.775000\du}{0\du}}{\pgfpoint{0\du}{1.675000\du}}
\pgfusepath{stroke}
\definecolor{dialinecolor}{rgb}{0.000000, 0.000000, 0.000000}
\pgfsetstrokecolor{dialinecolor}
\node at (19.925000\du,11.770000\du){};
\pgfsetlinewidth{0.100000\du}
\pgfsetdash{}{0pt}
\pgfsetdash{}{0pt}
\pgfsetbuttcap
{
\definecolor{dialinecolor}{rgb}{0.000000, 0.000000, 0.000000}
\pgfsetfillcolor{dialinecolor}
\definecolor{dialinecolor}{rgb}{0.000000, 0.000000, 0.000000}
\pgfsetstrokecolor{dialinecolor}
\draw (15.150000\du,10.400000\du)--(18.150000\du,10.350000\du);
}
\pgfsetlinewidth{0.100000\du}
\pgfsetdash{}{0pt}
\pgfsetdash{}{0pt}
\pgfsetbuttcap
{
\definecolor{dialinecolor}{rgb}{0.000000, 0.000000, 0.000000}
\pgfsetfillcolor{dialinecolor}
\definecolor{dialinecolor}{rgb}{0.000000, 0.000000, 0.000000}
\pgfsetstrokecolor{dialinecolor}
\draw (15.200000\du,12.850000\du)--(18.180000\du,12.850000\du);
}
\pgfsetlinewidth{0.100000\du}
\pgfsetdash{}{0pt}
\pgfsetdash{}{0pt}
\pgfsetbuttcap
{
\definecolor{dialinecolor}{rgb}{0.000000, 0.000000, 0.000000}
\pgfsetfillcolor{dialinecolor}
\definecolor{dialinecolor}{rgb}{0.000000, 0.000000, 0.000000}
\pgfsetstrokecolor{dialinecolor}
\draw (18.150000\du,10.350000\du)--(18.150000\du,12.850000\du);
}
\pgfsetlinewidth{0.100000\du}
\pgfsetdash{}{0pt}
\pgfsetdash{}{0pt}
\pgfsetbuttcap
{
\definecolor{dialinecolor}{rgb}{0.000000, 0.000000, 0.000000}
\pgfsetfillcolor{dialinecolor}
\definecolor{dialinecolor}{rgb}{0.000000, 0.000000, 0.000000}
\pgfsetstrokecolor{dialinecolor}
\draw (24.500000\du,10.450000\du)--(27.480000\du,10.450000\du);
}
\pgfsetlinewidth{0.100000\du}
\pgfsetdash{}{0pt}
\pgfsetdash{}{0pt}
\pgfsetbuttcap
{
\definecolor{dialinecolor}{rgb}{0.000000, 0.000000, 0.000000}
\pgfsetfillcolor{dialinecolor}
\definecolor{dialinecolor}{rgb}{0.000000, 0.000000, 0.000000}
\pgfsetstrokecolor{dialinecolor}
\draw (24.550000\du,12.850000\du)--(27.510000\du,12.800000\du);
}
\pgfsetlinewidth{0.100000\du}
\pgfsetdash{}{0pt}
\pgfsetdash{}{0pt}
\pgfsetbuttcap
{
\definecolor{dialinecolor}{rgb}{0.000000, 0.000000, 0.000000}
\pgfsetfillcolor{dialinecolor}
\definecolor{dialinecolor}{rgb}{0.000000, 0.000000, 0.000000}
\pgfsetstrokecolor{dialinecolor}
\draw (27.500000\du,10.400000\du)--(27.500000\du,12.850000\du);
}
\definecolor{dialinecolor}{rgb}{0.000000, 0.000000, 0.000000}
\pgfsetstrokecolor{dialinecolor}
\node[anchor=west] at (14.900000\du,11.662500\du){Buffer $2$};
\pgfsetlinewidth{0.100000\du}
\pgfsetdash{}{0pt}
\pgfsetdash{}{0pt}
\pgfsetbuttcap
{
\definecolor{dialinecolor}{rgb}{0.000000, 0.000000, 0.000000}
\pgfsetfillcolor{dialinecolor}
\pgfsetarrowsend{stealth}
\definecolor{dialinecolor}{rgb}{0.000000, 0.000000, 0.000000}
\pgfsetstrokecolor{dialinecolor}
\draw (21.700000\du,11.575000\du)--(24.400000\du,11.600000\du);
}
\pgfsetlinewidth{0.100000\du}
\pgfsetdash{}{0pt}
\pgfsetdash{}{0pt}
\pgfsetbuttcap
{
\definecolor{dialinecolor}{rgb}{0.000000, 0.000000, 0.000000}
\pgfsetfillcolor{dialinecolor}
\pgfsetarrowsend{stealth}
\definecolor{dialinecolor}{rgb}{0.000000, 0.000000, 0.000000}
\pgfsetstrokecolor{dialinecolor}
\draw (30.900000\du,11.725000\du)--(33.300000\du,11.750000\du);
}
\pgfsetlinewidth{0.100000\du}
\pgfsetdash{}{0pt}
\pgfsetdash{}{0pt}
\pgfsetbuttcap
{
\definecolor{dialinecolor}{rgb}{0.000000, 0.000000, 0.000000}
\pgfsetfillcolor{dialinecolor}
\pgfsetarrowsend{stealth}
\definecolor{dialinecolor}{rgb}{0.000000, 0.000000, 0.000000}
\pgfsetstrokecolor{dialinecolor}
\draw (12.000000\du,11.600000\du)--(14.500000\du,11.550000\du);
}
\definecolor{dialinecolor}{rgb}{0.000000, 0.000000, 0.000000}
\pgfsetstrokecolor{dialinecolor}
\node[anchor=west] at (24.500000\du,11.650000\du){Buffer $3$};
\definecolor{dialinecolor}{rgb}{0.000000, 0.000000, 0.000000}
\pgfsetstrokecolor{dialinecolor}
\node[anchor=west] at (18.800000\du,11.675000\du){Server $1$};
\definecolor{dialinecolor}{rgb}{0.000000, 0.000000, 0.000000}
\pgfsetstrokecolor{dialinecolor}
\node[anchor=west] at (29.319781\du,10.763566\du){};
\definecolor{dialinecolor}{rgb}{0.000000, 0.000000, 0.000000}
\pgfsetstrokecolor{dialinecolor}
\node[anchor=west] at (27.200000\du,11.500000\du){};
\pgfsetlinewidth{0.100000\du}
\pgfsetdash{}{0pt}
\pgfsetdash{}{0pt}
\pgfsetbuttcap
{
\definecolor{dialinecolor}{rgb}{0.000000, 0.000000, 0.000000}
\pgfsetfillcolor{dialinecolor}
\definecolor{dialinecolor}{rgb}{0.000000, 0.000000, 0.000000}
\pgfsetstrokecolor{dialinecolor}
\draw (18.600000\du,7.300000\du)--(18.550000\du,9.900000\du);
}
\pgfsetlinewidth{0.100000\du}
\pgfsetdash{}{0pt}
\pgfsetdash{}{0pt}
\pgfsetbuttcap
{
\definecolor{dialinecolor}{rgb}{0.000000, 0.000000, 0.000000}
\pgfsetfillcolor{dialinecolor}
\definecolor{dialinecolor}{rgb}{0.000000, 0.000000, 0.000000}
\pgfsetstrokecolor{dialinecolor}
\draw (18.550000\du,9.900000\du)--(21.300000\du,9.900000\du);
}
\pgfsetlinewidth{0.100000\du}
\pgfsetdash{}{0pt}
\pgfsetdash{}{0pt}
\pgfsetbuttcap
{
\definecolor{dialinecolor}{rgb}{0.000000, 0.000000, 0.000000}
\pgfsetfillcolor{dialinecolor}
\definecolor{dialinecolor}{rgb}{0.000000, 0.000000, 0.000000}
\pgfsetstrokecolor{dialinecolor}
\draw (21.250000\du,9.900000\du)--(21.200000\du,7.200000\du);
}
\pgfsetlinewidth{0.100000\du}
\pgfsetdash{}{0pt}
\pgfsetdash{}{0pt}
\pgfsetbuttcap
{
\definecolor{dialinecolor}{rgb}{0.000000, 0.000000, 0.000000}
\pgfsetfillcolor{dialinecolor}
\pgfsetarrowsend{stealth}
\definecolor{dialinecolor}{rgb}{0.000000, 0.000000, 0.000000}
\pgfsetstrokecolor{dialinecolor}
\draw (19.925000\du,13.250000\du)--(19.900000\du,15.200000\du);
}
\definecolor{dialinecolor}{rgb}{0.000000, 0.000000, 0.000000}
\pgfsetstrokecolor{dialinecolor}
\node[anchor=west] at (18.650000\du,8.500000\du){Buffer $1$};
\definecolor{dialinecolor}{rgb}{0.000000, 0.000000, 0.000000}
\pgfsetstrokecolor{dialinecolor}
\node[anchor=west] at (16.050000\du,11.650000\du){};
\definecolor{dialinecolor}{rgb}{0.000000, 0.000000, 0.000000}
\pgfsetstrokecolor{dialinecolor}
\node[anchor=west] at (19.925000\du,11.575000\du){};
\definecolor{dialinecolor}{rgb}{0.000000, 0.000000, 0.000000}
\pgfsetstrokecolor{dialinecolor}
\node[anchor=west] at (29.319781\du,10.763566\du){};
\definecolor{dialinecolor}{rgb}{1.000000, 1.000000, 1.000000}
\pgfsetfillcolor{dialinecolor}
\pgfpathellipse{\pgfpoint{29.212500\du}{11.725000\du}}{\pgfpoint{1.687500\du}{0\du}}{\pgfpoint{0\du}{1.650000\du}}
\pgfusepath{fill}
\pgfsetlinewidth{0.100000\du}
\pgfsetdash{}{0pt}
\pgfsetdash{}{0pt}
\pgfsetmiterjoin
\definecolor{dialinecolor}{rgb}{0.000000, 0.000000, 0.000000}
\pgfsetstrokecolor{dialinecolor}
\pgfpathellipse{\pgfpoint{29.212500\du}{11.725000\du}}{\pgfpoint{1.687500\du}{0\du}}{\pgfpoint{0\du}{1.650000\du}}
\pgfusepath{stroke}
\definecolor{dialinecolor}{rgb}{0.000000, 0.000000, 0.000000}
\pgfsetstrokecolor{dialinecolor}
\node at (29.212500\du,11.920000\du){};
\definecolor{dialinecolor}{rgb}{0.000000, 0.000000, 0.000000}
\pgfsetstrokecolor{dialinecolor}
\node[anchor=west] at (27.975000\du,11.720000\du){Server $2$};
\pgfsetlinewidth{0.100000\du}
\pgfsetdash{}{0pt}
\pgfsetdash{}{0pt}
\pgfsetbuttcap
{
\definecolor{dialinecolor}{rgb}{0.000000, 0.000000, 0.000000}
\pgfsetfillcolor{dialinecolor}
\pgfsetarrowsend{stealth}
\definecolor{dialinecolor}{rgb}{0.000000, 0.000000, 0.000000}
\pgfsetstrokecolor{dialinecolor}
\draw (19.800000\du,4.750000\du)--(19.800000\du,7.000000\du);
}
\definecolor{dialinecolor}{rgb}{0.000000, 0.000000, 0.000000}
\pgfsetstrokecolor{dialinecolor}
\node[anchor=west] at (20.050000\du,5.850000\du){$\lambda^n_1$};
\definecolor{dialinecolor}{rgb}{0.000000, 0.000000, 0.000000}
\pgfsetstrokecolor{dialinecolor}
\node[anchor=west] at (12.200000\du,11.050000\du){$\lambda^n_2$};
\definecolor{dialinecolor}{rgb}{0.000000, 0.000000, 0.000000}
\pgfsetstrokecolor{dialinecolor}
\node[anchor=west] at (22.150000\du,11.100000\du){$\mu^n_2$};
\definecolor{dialinecolor}{rgb}{0.000000, 0.000000, 0.000000}
\pgfsetstrokecolor{dialinecolor}
\node[anchor=west] at (20.250000\du,14.250000\du){$\mu^n_1$};
\definecolor{dialinecolor}{rgb}{0.000000, 0.000000, 0.000000}
\pgfsetstrokecolor{dialinecolor}
\node[anchor=west] at (31.500000\du,11.200000\du){$\mu^n_3$};
\end{tikzpicture}
\caption{The $n$th crisscross network.}
\end{figure}

A precise mathematical description is as follows. Let $(\Omega, \mathcal{F},\mathbb{P})$ be a complete probability space; all random variables and stochastic processes described in this work are, without loss of generality, defined on this common probability space. 
For $k=1,2,$ and $j=1,2,3$, let $\{u_k(i):i=1,2,\cdots\}$ be a sequence of i.i.d. random variables with mean $1$ and standard deviation $\sigma_k$, and $\{v_j(i):i=1,2,\cdots\}$ a sequence of i.i.d. random variables with mean $1$ and standard deviation $\varsigma_j$. In the $n$th network, for $k=1,2$ and $j=1,2,3$, the inter-arrival times $\{u^n_k(i):i=1,2,\cdots\}$ for Class $k$ customers, and the service times $\{v^n_j(i):i=1,2,\cdots\}$ for Class $j$ customers are given by 
\bes
u^n_k(i) = \frac{1}{\lambda^n_k} u_k(i), \ \ v^n_j(i) = \frac{1}{\mu^n_j} v_j(i),
\ees
where $\lambda^n_k, \mu^n_j \in (0,\infty)$ are the arrival and service rates. We further assume that the sequences of inter-arrival times and service times are mutually independent for each $n\in\NN$. Define
\begin{align*}\xi^n_k(l)=\sum_{i=1}^lu_k^n(i),\; \eta^n_j(l)=\sum_{i=1}^lv_j^n(i)\;\;\mbox{for}\,\,l=1,2,\cdots,\,\,k=1,2, \,\,j=1,2,3.
\end{align*}
Then the arrival and service processes can be described as follows:
\begin{align*}
A^n_k(t)=\sup\{l\geq 0:\xi^n_k(l)\leq t\},\; S^n_j(t)=\sup\{l\geq 0:\eta^n_j(l)\leq t\},\,\,\,t\geq 0,\,\,k=1,2,\,\,j=1,2,3.
\end{align*}
Thus $A^n_k(t)$ represents the numbers of customers of Class $k$ who have arrived up to time $t$ and $S^n_j(t)$ represents the number of customers of Class $j$ who would have finished service up to time $t$ if the corresponding server had continuously served Class $j$ customers during time interval $[0,t]$. We make the following assumptions on the arrival and service rates.
\begin{assumption}\label{rates-c}
For $k=1,2$ and $j=1,2,3$, there exist $\lambda_k, \mu_j \in (0,\infty)$ such that 
$\lim_{n\rightarrow \infty}\lambda_k^n=\lambda_k$,  $\lim_{n\rightarrow \infty}\mu_j^n=\mu_j$.
\end{assumption}
The following is our main heavy traffic assumption.
\begin{assumption} \label{htc} The following relations hold for the arrival and service rate parameters:
\begin{align}\label{htc-1}
\frac{\lambda_1}{\mu_1}+\frac{\lambda_2}{\mu_2}=1,\,\,\,\frac{\lambda_2}{\mu_3}=1\,,
\end{align}
and there exist $b_i \in \mathbb{R}$, $i=1,2,3$, such that 
\begin{align}\label{htc-2}
\lim_{n\to\infty} \sqrt{n}\biggl(\frac{\lambda^n_i}{\mu^n_i}-\frac{\lambda_i}{\mu_i}\biggr) = b_i,\,\,\,i=1,2,\,\,\,\lim_{n\to\infty} \sqrt{n}\biggl(\frac{\lambda^n_2}{\mu^n_3}-1\biggr) = b_3\,.
\end{align}
\end{assumption}
Condition \eqref{htc-1} says the traffic intensities at both stations converge to $1$ as $n\to\infty$, while the convergence rates of the traffic intensities are characterized in \eqref{htc-2}.

Finally, we make the following assumption on the logarithmic moment
generating functions for interarrival and service times that enables certain large deviation estimates  for the renewal processes $A^n_k, S^n_j$ (see Lemma \ref{LDP-renewal}).
\begin{assumption}
	\label{ass:ass3}
There is a non-empty open neighborhood $\clo$ of $0\in\RR$ such that for all $l\in\clo$, 
$$
\Lambda_{a,k}\doteq \log \EE(e^{{l u_k(1)}/{\lambda_k}}) < \infty, \; \Lambda_{s,j}\doteq \log \EE(e^{{l v_j(1)}/{\mu_j}}) < \infty,\; \ k=1,2,\;\ j=1,2, 3.
$$
\end{assumption}

\subsection{Scheduling control}
\label{sec:schcont}
Scheduling control for the $n$th network is described by a vector-valued allocation process
$$T^n(t)=(T^n_1(t),T^n_2(t),T^n_3(t))', \,\,\,t\geq 0\,,$$ where for $j=1,2,3,$ $T^n_j(t)$ represents the cumulative amount of service time devoted to Class $j$ customers in the time interval $[0,t]$. The idle time processes at two servers are defined as follows:
$$I^n_1(t)=t-T^n_1(t)-T^n_2(t),\,\,\,\,I^n_2(t)=t-T^n_3(t),\,\,\,t\geq 0\,.$$ 
For simplicity, we assume the system is initially empty. Then the queue-length processes corresponding to the three types of customers can be described as follows. For $t\geq 0$,
\be\label{queue-length}
Q^n_i(t) =A_i^n(t)-S^n_i(T^n_i(t)),\,\,\,i=1,2,\; \ \ Q^n_3(t)=S^n_2(T^n_2(t))-S^n_3(T^n_3(t)).
\ee
We write $A^n=(A^n_1, A^n_2)'$, $\lambda^n=(\lambda^n_1, \lambda^n_2)'$. The quantities $S^n, I^n, Q^n$, $\mu^n, \lambda, \mu$ are defined similarly.

In order to precisely formulate the family of admissible control policies, we need to introduce the notion of multi-parameter filtrations and stopping times (cf. Section 2.8 of \cite{EthKur}).
Define for $a = (a_1, a_2)'\in \NN^2, b = (b_1, b_2, b_3)' \in \NN^3$, 
\bes
\clg^n(a, b) = \sigma\left\{u^n_i(\tilde a_i), v^n_j(\tilde b_j): \tilde a_i \le a_i, \tilde b_j \le b_j, \ i=1,2, j=1,2,3\right\}.
\ees
Then $\{\clg^n(a,b): a\in\NN^2, b \in \NN^3\}$ is a {\em multiparameter filtration} with the following (partial) ordering
\bes
(\tilde a, \tilde b) \le (a, b) \ \mbox{if and only if} \ \tilde a_i \le a_i, \tilde b_j \le b_j, \ i=1,2, j=1,2,3.
\ees
A $\{\clg^n(a,b): a\in\NN^2, b \in \NN^3\}$ {\em multiparameter stopping time} is a random variable $\bft$ which takes values in $\bar\NN^5$, where $\bar\NN = \NN \cup \{\infty\}$, such that 
\bes
\{\bft = (a, b)\} \in \clg^n(a,b), \ \ \mbox{for all $a\in\NN^2, b \in \NN^3.$}
\ees
The $\sigma$-field associated with such a stopping time is given by 
\bes
\clg^n_\bft = \left\{B\in \clf: B\cap \{\bft = (a, b)\}\in \clg^n(a,b), \ \mbox{for all $a\in\NN^2, b\in\NN^3$}\right\}. 
\ees
The scheduling control process $\{T^n(t)\}$ is required to satisfy the following  conditions.  
\begin{itemize}
\item[\rm (i)] For $j=1,2,3$ and $n \in \NN$, $T^n_j$ is a continuous non-decreasing process with $T^n_j(0)=0$.
\item[\rm (ii)] For $k=1,2$ and $n \in \NN$, $I^n_k$ is a continuous non-decreasing process with $I^n_k(0)=0$.
\item[\rm (iii)] For $j=1,2,3$, $t\ge 0$, and $n \in \NN$, $Q^n_j(t)\geq 0$.
\item[\rm (iv)] Define for each $t \ge 0$ and $n \in \NN$, a $\NN^5$ valued random variable
$$\sigma_0^n(t) \doteq \left(A_i^n(nt)+1, i=1,2;\; S^n_j(T^n_j(nt))+1, j=1,2,3\right).$$
Then, for each $t\ge 0$, $\sigma^n_0(t)$ is a $\{\clg^n(a,b): a\in\NN^2, b \in \NN^3\}$ stopping time.  

Define the filtration $\{\clg^n_1(t): t\ge 0\}$ as $\clg^n_1(t) = \clg^n_{\sigma^n_0(t)}$. Then $I^n(nt)$ is $\clg^n_1(t)$ measurable for every $t\ge 0$.
\end{itemize} 
From (i) and (ii), we see that for all $j=1,2,3,$
\be\label{T-lip}
\mbox{ $T_j^n$ is uniformly (in $n$) Lipschitz continuous with Lipschitz constant bounded by $1$.}
\ee 
Although condition (iv) above appears somewhat technical, it is a natural non-anticipativity property and Theorem 5.4 of \cite{AA1} shows that the condition is satisfied for a very broad family of control processes.

Any process $T^n$ satisfying the above conditions will be referred to as an admissible control policy for the $n$th network.


\subsection{Scaled processes}\label{scaled}
Now we define fluid-scaled and diffusion-scaled processes corresponding to the processes described above. 
For each $n \in \N$, 
define for $t\geq 0$, $\bar{T}^n(t)  \doteq n^{-1}T^n(nt)$. Processes $\bar{I}^n, \bar{A}^n, \bar{S}^n, \bar{Q}^n$ are defined similarly. These will occasionally be referred to as fluid scaled processes.


We also defined diffusion-scaled processes:
\begin{equation*}
\begin{aligned}
\hat{A}^n(t) & \doteq n^{-1/2}(A^n(nt)- n\lambda^n t),\;
\hat{S}^n(t)  \doteq n^{-1/2}(S^n(nt)- n\mu^n t),\\
\hat{T}^n(t) & \doteq n^{-1/2}T^n(nt),\;
 \hat{I}^n(t)  \doteq n^{-1/2}I^n(nt),\;
\hat{Q}^n(t)  \doteq n^{-1/2}Q^n(nt). 
\end{aligned}
\end{equation*}
We next define the workload process $W^n =\{(W^n_1(t),W^n_2(t))': t\geq 0\}$, which measures the amount of service  needed for customers that are in the system at time $t$. More precisely, for $t\ge 0$, define
\be\label{workload}
W^n_1(t)= \frac{Q^n_1(t)}{\mu^n_1}+\frac{Q^n_2(t)}{\mu^n_2},\;
W^n_2(t) = \frac{Q^n_2(t)}{\mu^n_3}+\frac{Q^n_3(t)}{\mu^n_3}.
\ee
The fluid and diffusion scaled workload processes are defined as follows: For $t\ge 0$,
\bes
\bar W^n(t) \doteq n^{-1} W^n(nt),\; \hat W^n(t) \doteq n^{-1/2} W^n(nt). 
\ees

\section{Main Result}
\label{sec:netcont}

We consider an expected infinite horizon discounted cost, associated with an admissible control $T^n$ and the corresponding normalized queue-length process $\hat{Q}^n$, given as follows:
\begin{align}\label{cf}
\hat{J}^n(T^n)=\mathbb{E}\biggl(\int_0^{\infty}e^{-\gamma t}c\cdot\hat{Q}^n(t)dt\biggr)\,,
\end{align}where $\gamma \in (0, \infty)$ is the discount factor and $c\equiv (c_1,c_2,c_3)'$ is a strictly positive vector of holding costs.  The aim is to find a sequence $\{T^n\}$ of scheduling control policies which is asymptotically optimal, namely it satisfies
\begin{align*}
\lim_{n\rightarrow \infty}\hat{J}^n(T^n)=\inf \liminf_{n\rightarrow \infty}\hat{J}^n(\tilde{T}^n)\,,
\end{align*}where the infimum is over the set of all admissible control policy sequences $\{\tilde{T}^n\}_{n\in\NN}$.
We make the following assumption on the service rates and the holding cost parameters.
\begin{assumption}\label{regime-c} 
$ c_2\mu_2 - c_3\mu_2 <0,\,\, c_2\mu_2-c_1\mu_1\geq 0$.
\end{assumption}
As noted in the Introduction, this parameter regime is the Case IIB considered in \cite{MSS} and an optimal policy needs to suitably balance several considerations such as, overall cost is reduced at a higher rate if Server 1 processes Class $1$ customers instead
of Class $2$ customers; it is cheaper to keep jobs in Buffer $2$ than in Buffer $3$;  the  cost of jobs in the queues processed by Server 1 is reduced more rapidly
if jobs in Buffer 2 are processed; and that it is undesirable to have Server 2 idle when there is work in Buffer 2.

We now describe our proposed policy which suitably takes into account the various complex features of this parameter regime.
The policy is motivated by the form of the solution of the Brownian control problem associated with this control problem.
This point will be explained further in Section \ref{BCP} (see comments below Corollary \ref{optimal-solns}) where the Brownian control problem is studied in detail.
 
Fix $c, l_0 \in (1, \infty)$ and $g_0 \in (0,\infty)$. Define $L^n\doteq \lfloor l_0\log n\rfloor$ and $C^n\doteq \lfloor c_0 \log n\rfloor$, where $c_0=cl_0$. Since we are only interested in asymptotic optimality, we will assume without any loss of generality, that $n\geq \bar{n}$, where $\bar{n}$ is such that for all $n\geq \bar{n}$, $C^n-L^n-1\geq 1$ and $\frac{\mu_1^n}{\mu_2^n}(C^n-L^n+2)\geq 1$. 
The control policy will be specified in terms of the {\em free boundary}
$\{(x_1,x_2): x_1 = \Psi(x_2)\}$ where $\Psi$ is as specified in \eqref{imfcn} with $J^*$ as in \eqref{valuebcp}.

\begin{definition}[Control policy]\label{propolicy}
The policy is as follows. Server $2$ processes jobs from Buffer 3 whenever the buffer is nonempty. The sequencing policy for Server $1$ is as follows. At time $s\in [0,\infty),$
\begin{itemize}
\item[] if $Q^n_3(s)-\frac{\mu^n_2}{\mu^n_1}Q^n_1(s)< L^n$,
\begin{itemize}
\item[] serve Class $1$ customers (provided the queue is non-empty) if either $Q^n_3(s) \geq C^n-1$ or $Q^n_2(s)=0$,
\item[] serve Class $2$ customers if $Q^n_3(s)<C^n-1$ and $Q^n_2(s) > 0$;
\end{itemize}
\item[] if $Q^n_3(s)-\frac{\mu^n_2}{\mu^n_1}Q^n_1(s)\geq L^n$,
\begin{itemize}
\item[] serve Class $1$ customers  if either $Q^n_1(s)\geq \frac{\mu_1^n}{\mu_2^n}(C^n-L^n+2)$ or $(Q^n_2(s)=0 \mbox{ and } Q^n_1(s) > 0)$,
\item[] serve Class $2$ customers if $Q^n_1(s)< \frac{\mu_1^n}{\mu_2^n}(C^n-L^n+2)$, $Q^n_2(s) > 0$
and $W^n_1(s) -\sqrt{n}\Psi(W^n_2(s)/\sqrt{n})\\\ge g_0$,
\item[] idle Server 1 if $Q^n_1(s)< \frac{\mu_1^n}{\mu_2^n}(C^n-L^n+2)$, $Q^n_2(s) > 0$
and $W^n_1(s) -\sqrt{n}\Psi(W^n_2(s)/\sqrt{n}) < g_0$.
\end{itemize}
\end{itemize}
\end{definition}

\begin{remark}
The free boundary $\Psi$ is given in terms of the value function $J^*$ of the {\em Workload Control Problem} in
Section \ref{BCP}.  Although this function does not have a closed form expression, there are well developed numerical
methods for solving such free boundary problems (see \cite{KusMar, MutKum1, MutKum2, BudRos}). 
\end{remark}

\begin{remark}
We note that the proposed control policy addresses many of the complex features of the parameter regime. In particular
the policy says that if Queue 1 is sufficiently large then it receives priority, unless there is  a risk of Server 2
idling despite there being jobs in Queue 2. Furthermore, Server 1 idles when Queue 1 is (asymptotically) negligible and there
is enough work in Queue 3, so as to keep jobs in the buffer with a lower holding cost.  This latter property is enforced by the last line in the control policy that involves the free boundary function $\Psi$.
\end{remark}

\begin{remark}
	From Theorem 5.4 of \cite{AA1} it follows that the control policy in Definition \ref{propolicy} is an admissible control policy in the sense of Section \ref{sec:schcont}.
	An explicit representation for the policy in terms of a vector allocation process $T^n$ can be given in a manner analogous to Remark 3.8 of \cite{AA}.
\end{remark}


The following is our main result, which gives the asymptotic optimality of the  policy proposed in Definition \ref{propolicy}. The limit of the cost under the proposed policy is  characterized in Theorem \ref{main2} in terms of the solution of a Brownian control problem. 
\begin{theorem}\label{main-result}
Let $g_0\in(0,\infty)$. There exist $c, \bar{l} \in (1,\infty)$ such that the sequence of scheduling controls $\{T^n\}$ defined in Definition \ref{propolicy} with threshold parameters $c, l_0, g_0$, with $l_0\geq \bar{l}$, satisfies 
\bes
\lim_{n\to \infty} \hat{J}^n(T^n) = \inf\liminf_{n\to\infty} \hat{J}^n(\tilde T^n),
\ees
where the infimum is taken over all admissible control policy sequences $\{\tilde{T}^n\}$.
\end{theorem}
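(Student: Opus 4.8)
The plan is to prove a matching upper and lower bound for the scaled cost $\hat J^n(T^n)$. Both bounds will be expressed in terms of the value $J^*$ of the Workload Control Problem reviewed in Section~\ref{BCP}, which is attained by the reflected Brownian motion in the Lipschitz domain $G = \{x \in \RR_+^2 : x_1 \ge \Psi(x_2)\}$ from \cite{AK}. Thus the heart of the proof consists of two complementary statements: a \emph{lower bound} (Lemma~\ref{lower-bound-est}) that $\inf \liminf_n \hat J^n(\tilde T^n) \ge J^*$ over all admissible sequences, obtained by the usual route of showing any sequence of controlled, diffusion-scaled workload processes is $C$-tight, passing to a weak limit, verifying that the limit is feasible for the Brownian/workload control problem (so its cost is $\ge J^*$), and using the asymptotic equivalence between the network holding cost $c\cdot \hat Q^n$ and the workload-based cost $\hat h(\hat W^n)$ furnished by the effective cost function $\hat h$ from \eqref{LP-value}; and an \emph{upper bound} that under the proposed policy $\lim_n \hat J^n(T^n) \le J^*$, which is the content of the comparison with the Brownian control problem solution. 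The two together give the claimed equality.

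For the upper bound I would organize the argument around the key intermediate results Theorem~\ref{impthm1} and Theorem~\ref{impthm2}. The first step is to show that, under the policy of Definition~\ref{propolicy}, the ``fast'' queue-length coordinates (the combinations that are controlled by the threshold rules with thresholds of order $\log n$, namely $Q^n_3 - (\mu^n_2/\mu^n_1)Q^n_1$ and the relevant buffer occupancies) stay confined to an $O(\log n)$ band with overwhelming probability on compact time intervals; this is where Assumption~\ref{ass:ass3} enters, via the fixed-time large deviation estimates for renewal processes (Lemma~\ref{LDP-renewal}) used to control the rare excursions of $A^n_k, S^n_j$ over windows of length $\sim \log n$. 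The scaling $L^n = \lfloor l_0 \log n\rfloor$ with $l_0$ large is chosen precisely so that these deviation probabilities are summable/negligible after diffusion scaling, which is why the statement of the theorem only asserts existence of $c,\bar l$ with the conclusion holding for $l_0 \ge \bar l$. The second step, Theorem~\ref{impthm2}, is to show that with the fast coordinates pinned, the diffusion-scaled workload $\hat W^n$ under the proposed policy converges weakly to the reflected Brownian motion in $G$: the idling rule involving $W^n_1(s) - \sqrt n\,\Psi(W^n_2(s)/\sqrt n) < g_0$ is engineered so that, after scaling, $\hat W^n$ is reflected in direction $e_1$ on $\partial_1 G$ (the free boundary) and in direction $e_2$ on $\partial_2 G$, exactly matching the optimal controlled process of \cite{AK}; the parameter $g_0$ controls an $O(1/\sqrt n)$ boundary layer that vanishes in the limit. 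One then combines this with the asymptotic continuity/uniform integrability needed to pass the limit through the discounted integral, again using that $c\cdot\hat Q^n(t) \approx \hat h(\hat W^n(t))$ up to $o(1)$ terms, to conclude $\hat J^n(T^n) \to \EE\int_0^\infty e^{-\gamma t}\hat h(W^*(t))\,dt = J^*$.

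The main obstacle, and where almost all the technical weight lies, is Theorem~\ref{impthm2} — establishing that the proposed threshold policy actually produces a scaled workload that tracks the reflected Brownian motion in the \emph{Lipschitz} domain $G$. Unlike the normal-reflection-in-the-orthant case of \cite{AA}, here the reflecting boundary $\partial_1 G$ is a curve $x_1 = \Psi(x_2)$ that is only Lipschitz (no $C^2$ regularity, no smooth fit), so the standard invariance-principle machinery for oblique-reflection maps does not apply off the shelf; one must instead work directly with the Skorohod-type representation \eqref{workload-general-solu-conj} (with $\Psi_1 = \Psi$, $\Psi_2 = 0$) and prove stability/continuity of the solution map to this coupled, state-dependent one-sided reflection problem under the perturbations coming from the prelimit network. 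The appendix's facts about the one-dimensional Skorohod map are the tool for this, applied iteratively in the two coordinates, together with the monotonicity and Lipschitz properties of $\Psi$. A secondary difficulty is that the proof of the corresponding exponential confinement estimate (Theorem~\ref{impthm1}) must be carried out under general inter-arrival/service distributions rather than the exponential assumption of \cite{AA}, so the clean sample-path large-deviation arguments for Poisson processes must be replaced by the more delicate fixed-time renewal estimates, and one must verify these still give bands of width $O(\log n)$ with negligible overflow probability after the diffusion rescaling; getting the constants to line up is what forces the ``$l_0 \ge \bar l$'' hypothesis and the coupling of $c_0 = c l_0$.
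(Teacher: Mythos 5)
Your top-level architecture matches the paper exactly: Theorem~\ref{main-result} is proved by combining a lower bound ($\inf\liminf_n\hat J^n(\tilde T^n)\ge J^*(0)$ over all admissible sequences) with an upper bound ($\hat J^n(T^n)\to J^*(0)$ under the proposed policy), and the upper bound rests on the state-space-collapse estimates, the Skorohod-map representation of $\hat W^n$, and uniform integrability of $\sup_{s\le t}\hat W^n_i(s)$. However, two of your attributions are wrong in a way that matters.

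First, the lower bound is \emph{not} Lemma~\ref{lower-bound-est}. That lemma is a purely pathwise inequality giving a lower bound on $W^n_1(t)-\sqrt n\,\Psi(W^n_2(t)/\sqrt n)$ under the proposed policy; its role is in the \emph{upper}-bound argument, where it shows (via \eqref{main-conv-3}) that the term $(\hat w^n_1-\Psi(\hat W^n_2))1_{\bfC(n,\cdot)}$ vanishes and hence that the scaled workload asymptotically lives in $G$ as required. The cost lower bound over all admissible sequences is Theorem~\ref{main1}, which the paper does not re-prove but cites directly as an immediate consequence of Theorem~3.1 of \cite{AA1}. So your proposed programme of proving $C$-tightness, extracting a weak limit, and verifying feasibility is the right picture of what is in \cite{AA1}, but it is imported here, not redone, and it has nothing to do with Lemma~\ref{lower-bound-est}.

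Second, Theorem~\ref{impthm2} is \emph{not} the weak-convergence statement $\hat W^n\Rightarrow W^*$. Like Theorem~\ref{impthm1}, it is a quantitative large-deviations bound: it controls $\PP\bigl[\int_{[0,t)}\bfi_{\bfB_d(n,s)}\,d\hat I^n_2(s)\ne 0\bigr]$, i.e.\ the probability that Server~2 incurs idleness while $\hat Q^n_2$ is above a band of width $O(\log n/\sqrt n)$. The weak convergence $(\hat W^n,\hat I^n)\Rightarrow(W^*,I^*)$ is part (i) of Theorem~\ref{main2}, and its proof uses \emph{all three} of Theorem~\ref{impthm1}, Theorem~\ref{impthm2}, and Lemma~\ref{lower-bound-est}: the first two via Corollary~\ref{impcor} to eliminate the indicator and integral terms in the Skorohod representations \eqref{exp1}--\eqref{exp3}, and the last to control the $\bfC(n,\cdot)$ term. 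The genuinely technical weight that you correctly anticipate (replacing Poisson sample-path LDP by fixed-time renewal estimates, and tuning $l_0$, $c_0=cl_0$ so the exponents beat the polynomial prefactors $(nt+1)^2,(nt+1)^3$) sits in the proofs of Theorems~\ref{impthm1} and \ref{impthm2} themselves, not in an oblique-reflection invariance principle for $G$; the convergence step in Theorem~\ref{main2} is actually comparatively soft once those estimates are available, because equations \eqref{eq:eq1043}--\eqref{eq:eq1043b} recast the prelimit workload as a perturbation of the continuous map in \eqref{eq:eq1100}.
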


\begin{remark}
The parameters $c$ and $\bar l$ in Theorem \ref{main-result} can be chosen as follows. 

For $i=1,2,3, j=1,2$, and $l=1,2$, define $\Theta^{s,i}_l$ and $\Theta^{a,j}_l$ as $\Theta_l$ in Lemma \ref{LDP-renewal} with $\Lambda^*$ replaced by $\Lambda^*_{s,i}$ and $\Lambda^*_{a,j}$, respectively, where $\Lambda^*_{s,i}$ and $\Lambda^*_{s,j}$ are the Legendre-Fenchel transforms of $\Lambda_{s,i}$ and $\Lambda_{a,j}$, respectively. Now let $\theta_4$ be as in Theorem \ref{impthm1}. From \eqref{final-est-1} one can see that, $\theta_4$ can be chosen as follows:
\[
\theta_4 =\left( \inf_{n\ge 1} \frac{\mu^n_1}{2\mu^n_2(\lambda^n_1+\epsilon)}\right) \min \left\{\lambda_1 \Theta^{a,1}_1(\lambda_1, \epsilon), \ \mu_1 \Theta^{s,1}_2(\mu_1, \epsilon), \ p_0\epsilon/(2\mu_1) \right\},
\]
where $\epsilon \in (0, (\mu_1-\lambda_1)/4)$ and $p_0\in \mathcal{O}$. Let $c= 1+ \frac{4}{\theta_4}$. 

Next let $\gamma_4$ be as in Theorem \ref{impthm2}. From \eqref{final-est-2} we see that $\gamma_4$ can be taken to be
{\small\begin{align*}
\gamma_4 & = \min\left\{ d\lambda_2\Theta^{a,2}_1(\lambda_2, \epsilon_1)/K, \ d(\theta+1)(\mu_i-2\epsilon_1)\Theta^{s,i}_2(\mu_i, \epsilon_1)/K, \ d\theta\mu_i\Theta^{s,i}_1(\mu_i,\epsilon_1)/K, \ dp_0(\theta+1)\epsilon_1/(2\mu_iK),\right.\\
& \quad\quad\quad  \left. \mu_i\Theta^{s,i}_1(\mu_i,\epsilon_1)/(4\mu_3), \ \lambda_2\Theta^{a,2}_1(\lambda_2,\epsilon_1)/(4\mu_3), \ (\mu_i-2\epsilon_1)\Theta^{s,i}_2(\mu_i,\epsilon_1)/(4\mu_3), \right. \\
& \quad \quad\quad  \left.  (\lambda_2-2\epsilon_1)\Theta^{a,2}_2(\lambda_2,\epsilon_1)/(4\mu_3), \ p_0\epsilon_1/(8\mu_3\mu_i), \ p_0\epsilon_1/(8\mu_3\lambda_2), \ \ i=2,3  \right\},
\end{align*}}
where $\epsilon_1, d, K, \theta$ are as in \eqref{imp-parameters}. Finally, let $\bar l > \max\{1, \frac{3}{\gamma_4} \}$. 
\end{remark}

\section{Brownian Control Problem and Equivalent Workload Formulation} \label{BCP}

We now introduce the Brownian control problem associated with the control problem from Section \ref{sec:netcont}. Roughly speaking, the BCP is obtained by taking a formal limit of the sequence of queueing control problems. Using the scaling defined in Section \ref{scaled}, we have, from \eqref{queue-length},  for a given sequence of admissible control policies $\{T^n\}$, and for all $t\geq 0$, 
\begin{equation}\label{scaled1}
\begin{aligned}
\hat{Q}^n_i(t)&=\hat{A}^n_i(t)-\hat{S}^n_i(\bar{T}^n_i(t))+ \sqrt{n}(\lambda^n_it-\mu^n_i\bar{T}^n_i(t))\\
& = \hat{A}^n_i(t)-\hat{S}^n_i(\bar{T}^n_i(t)) + \sqrt{n}\mu^n_i\left(\frac{\lambda^n_i}{\mu^n_i} -\frac{\lambda_i}{\mu_i}\right) t + \sqrt{n} \mu^n_i \left(\frac{\lambda_i}{\mu_i} t - \bar{T}^n_i(t) \right), \ \ i=1,2, \\
\hat{Q}^n_3(t)&=\hat{S}^n_2(\bar{T}^n_2(t))-\hat{S}^n_3(\bar{T}^n_3(t)) + \sqrt{n}(\mu^n_2\bar{T}^n_2(t)-\mu^n_3\bar{T}^n_3(t)) \\
& = \hat{S}^n_2(\bar{T}^n_2(t))-\hat{S}^n_3(\bar{T}^n_3(t)) + \sqrt{n} \left(\mu^n_2 \frac{\lambda_2}{\mu_2} - \mu^n_3 \right) t
- \sqrt{n}\mu^n_2\left(\frac{\lambda_2}{\mu_2} t - \bar{T}^n_2(t) \right) + \sqrt{n}\mu^n_3(t - \bar{T}^n_3(t)).
\end{aligned}
\end{equation}
For $t\geq 0$, let
\begin{equation}\label{scaled2}
\begin{aligned}
\hat{X}^n_i(t)&\doteq \hat{A}^n_i(t)-\hat{S}^n_i(\bar{T}^n_i(t)) + \sqrt{n}\mu^n_i\left(\frac{\lambda^n_i}{\mu^n_i} -\frac{\lambda_i}{\mu_i}\right) t ,\,\,\,i=1,2,\\
\hat{X}^n_3(t)&\doteq \hat{S}^n_2(\bar{T}^n_2(t))-\hat{S}^n_3(\bar{T}^n_3(t)) + \sqrt{n}\left(\mu^n_2\frac{\lambda_2}{\mu_2}t-\mu^n_3t\right),
\end{aligned}
\end{equation}
and 
\begin{equation}\label{scaled3}
\hat{Y}^n_i(t)\doteq  \sqrt{n} \left(\frac{\lambda_i}{\mu_i} t - \bar{T}^n_i(t)\right),\,\,\,i=1,2,\;\ \ \hat{Y}^n_3(t)\doteq \sqrt{n}(t - \bar{T}^n_3(t)).
\end{equation}
From \eqref{scaled1} -- \eqref{scaled3}, we get the following relationships:
\be\label{scaled4}
\hatq^n_i(t)  = \hatx^n_i(t) + \mu^n_i \haty^n_i(t), \ \ i=1,2,\; \ \
\hatq^n_3(t)  = \hatx^n_3(t)  - \mu^n_2\haty^n_2(t) + \mu^n_3\haty^n_3(t). 
\ee
Define for $t\geq 0$,
\be\label{T-star}
\bart^*(t) = \left(\frac{\lambda_1}{\mu_1}, \frac{\lambda_2}{\mu_2}, 1\right)' t.
\ee
Write $\hatx^n = (\hatx^n_1, \hatx^n_2, \hatx^n_3)'$. 
It can be argued (see for example Lemma 3.3 in \cite{AA1}) that, under `reasonable' control policies, $\bart^n\Go\bart^*$ and consequently, using functional central limit theorem for renewal processes, under such policies
\be\label{BM-conv}
\hatx^n\Go X,
\ee 
where $X$ is a three-dimensional Brownian motion that starts from the origin and has drift $(\mu_1b_1,\mu_2b_2,\mu_3b_3-\mu_2b_2)$ and covariance matrix
\begin{align*}
\begin{pmatrix}
\sigma^2_1\lambda_1 + \varsigma^2_1\lambda_1 & 0 & 0\\
0 & \sigma_2^2\lambda_2 + \varsigma^2_1\lambda_2 & -\varsigma^2_1\lambda_2\\
0 & -\varsigma^2_1\lambda_2 & \varsigma^2_2\lambda_2 + \varsigma_3^2 \mu_3
\end{pmatrix}\,.
\end{align*}
Also,
\be\label{deviation-idle}
\hati^n_1(t) = \haty^n_1(t) + \haty^n_2(t), \ \mbox{and} \ \hati^n_2(t) = \haty^n_3(t), \ t\ge 0,
\ee
which are nondecreasing processes starting from $0$.

Thus taking a formal limit as $n\rightarrow \infty$ in \eqref{scaled4},  we arrive at the following BCP.
\begin{definition}\label{bcp}
Let $X$ be a three dimensional Brownian motion as in \eqref{BM-conv}, given on some filtered probability space
$(\Om, \clf, \{\clf_t\}_{t\ge 0}, \PP)$. The Brownian control problem is to find an $\mathbb{R}^3$-valued $\{\clf_t\}$-adapted stochastic process $\tilde{Y} =(\tilde{Y}_1, \tilde{Y}_2, \tilde{Y}_3)'$,  which minimizes
\begin{align*}
\mathbb{E}\biggl(\int_0^{\infty}e^{-\gamma t}c\cdot\tilde{Q}(t)dt\biggr)\,,
\end{align*}
subject to the following conditions. For all $t\geq 0$,
\begin{equation}\label{bcp1}
0  \le  \tilde{Q}_i(t) \doteq {X}_i(t) + \mu_i\tilde{Y}_i(t),\; i=1,2, \; 
0  \le  \tilde{Q}_3(t) \doteq {X}_3(t) + \mu_3\tilde{Y}_3(t)-\mu_2\tilde{Y}_2(t),
\end{equation}
and
\begin{equation}\label{bcp2}
\tilde{I}_1 \doteq \tilde{Y}_1 + \tilde{Y}_2,\;
\tilde{I}_2 \doteq \tilde{Y}_3\; \mbox{ are non-decreasing, and }\,\,\tilde{I}_i(0)=0,\; i=1,2.
\end{equation}
We will refer to any $\{\clf_t\}$-adapted process $\tilde{Y} = (\tilde Y_1, \tilde Y_2, \tilde Y_3)'$ satisfying \eqref{bcp1} and \eqref{bcp2} as an admissible control for the BCP and an admissible control that achieves the minimum cost as an optimal control for the BCP.
\end{definition}
We now introduce an equivalent workload formulation  of the above BCP that makes use of a certain static deterministic
 linear programming (LP) problem. Recall the workload process defined in \eqref{workload}.
Using \eqref{scaled4} and \eqref{deviation-idle}, we have for $t\ge 0$, 
\be\label{scaled-workload}\ba
\hatw^n_1(t)&= \frac{\hatq^n_1(t)}{\mu^n_1}+\frac{\hatq^n_2(t)}{\mu^n_2} = \frac{\hatx^n_1(t)}{\mu^n_1}+\frac{\hatx^n_2(t)}{\mu^n_2} + \hati^n_1(t), \\
\hatw^n_2(t)&= \frac{\hatq^n_2(t)}{\mu^n_3}+\frac{\hatq^n_3(t)}{\mu^n_3}  = \frac{\hatx^n_2(t)}{\mu^n_3}+\frac{\hatx^n_3(t)}{\mu^n_3} + \hati^n_2(t). 
\ea\ee
 Fix $w_1,w_2 \in [0,\infty)$. Consider the LP problem  defined as:
\be\label{LP}\begin{aligned}
&\mbox{minimize}_{q_1,q_2,q_3}\,\,\, c_1q_1+c_2q_2+c_3q_3\\
&\mbox{subject to}\,\,\,\frac{q_1}{\mu_1}+\frac{q_2}{\mu_2}=w_1,\;
\frac{q_2}{\mu_3}+\frac{q_3}{\mu_3}=w_2,\;
q_1,q_2,q_3\geq 0.
\end{aligned}\ee
A straightforward calculation using the fact that $c_1\mu_1-c_2\mu_2+c_3\mu_2 > 0$ shows that the value of the LP is
\begin{align}\label{LP-value}
\hat{h}(w_1,w_2)=\begin{cases}(c_1\mu_1)w_1+\frac{\mu_3}{\mu_2}(c_2\mu_2-c_1\mu_1)w_2, \,\,\,\mbox{when} \,\,\,\mu_3w_2\leq \mu_2w_1, \\
(c_2\mu_2-c_3\mu_2)w_1+(c_3\mu_3)w_2, \,\,\,\mbox{when} \,\,\,\mu_3w_2\geq \mu_2w_1,
\end{cases}
\end{align}
and the optimal solution is 
\be\label{LP-soln}\ba
& q^*_1 =\frac{\mu_1}{\mu_2}(\mu_2w_1-\mu_3w_2), & q^*_2 = \mu_3 w_2, \ \ & q^*_3 = 0, & \mbox{if} \ \mu_3w_2\le \mu_2w_1,\\
& q^*_1 =0, & q^*_2 = \mu_2 w_1, \ \ & q^*_3 = \mu_3w_2-\mu_2w_1, & \mbox{if} \ \mu_3w_2\ge \mu_2w_1.
\ea\ee
Using \eqref{LP-value}, and taking a formal limit in \eqref{scaled-workload}, we arrive at the following  control problem, which is usually referred to as the equivalent workload formulation (for the BCP in Definition \ref{bcp}). 
\begin{definition}
	\label{def:ewf}
Let $X$ and $(\Om, \clf, \{\clf_t\}_{t\ge 0}, \PP)$ be as in Definition \ref{bcp}. The equivalent workload formulation (EWF) is to find an $\mathbb{R}^2$-valued $\{\clf_t\}$-adapted stochastic process $\tilde{I}=(\tilde{I}_1,\tilde{I}_2)'$,  which minimizes
\begin{align}\label{workload-obj}
\mathbb{E}\biggl(\int_0^{\infty}e^{-\gamma t}\hat{h}(\tilde{W}(t))dt\biggr)\,,
\end{align}
subject to the following conditions. For all $t\geq 0$
\begin{equation}\label{workload-EWF}
\begin{aligned}
&0 \leq \tilde{W}_1(t)\doteq B_1(t)+\tilde{I}_1(t),\;
0 \leq \tilde{W}_2(t)\doteq B_2(t)+\tilde{I}_2(t), \\
&\tilde{I}_1, \ \tilde{I}_2 \,\,\mbox{are nondecreasing, and}\,\,\,\tilde{I}_1(0) = \tilde{I}_2(0)=0\, ,
\end{aligned}
\end{equation}
where
\begin{equation}
	\label{eq:eq911}
	B_1(t) = \frac{{X}_1(t)}{\mu_1}+\frac{{X}_2(t)}{\mu_2}, \;\; B_2(t) = \frac{{X}_2(t)}{\mu_3}+\frac{{X}_3(t)}{\mu_3}.
\end{equation}
We will refer to any $\{\clf_t\}$-adapted process $\tilde{I} = (\tilde I_1, \tilde I_2)'$ satisfying \eqref{workload-EWF}  as an admissible control for the EWF and an admissible control that achieves the minimum cost as an optimal control for the EWF.
\end{definition}

The BCP and EWF of the above form were first introduced by Harrison in \cite{harrison88}, and they have been used extensively in the study of optimal scheduling for multiclass queuing networks in heavy traffic 
(see \cite{HarVan, bellwill01, bellwill05, Har2, Kum,  AA, DaiLin, AA1, AA2, BudLiu}). In particular, the BCP and EWF introduced here are identical to those in \cite{AA}. 
The following lemma says that in order to solve the BCP it suffices to solve the associated EWF. The proof of this lemma is straightforward from \eqref{LP-value} and \eqref{LP-soln} and we refer the reader to Section 3.1 of \cite{AA} 
for details. 
\begin{lemma}\label{BCP-EWF}
Suppose $\tilde I^*$ is an optimal control of the EWF. Denote by $\tilde W^*$ the corresponding optimal workload (which is defined by \eqref{workload-EWF} with $\tilde I$ replaced by $\tilde I^*$). Define when $\mu_3\tilde{W}^*_2(t)< \mu_2\tilde{W}^*_1(t)$,
\begin{align}
\tilde{Y}^*_1(t) & \doteq -\frac{X_3(t)}{\mu_2} + \tilde I^*_1(t) - \frac{\mu_3}{\mu_2} \tilde I^*_2(t), \ \tilde Y^*_2(t)  \doteq \frac{X_3(t)}{\mu_2} + \frac{\mu_3}{\mu_2} \tilde I^*_2(t), \ \tilde Y^*_3(t)  \doteq \tilde I^*_2(t), \label{eq:eq1235a}
\end{align}
 and when $\mu_3\tilde{W}^*_2(t)\geq \mu_2\tilde{W}^*_1(t)$,
 \begin{align}
\tilde{Y}^*_1(t) & \doteq -\frac{X_1(t)}{\mu_1}, \ \tilde Y^*_2(t) \doteq \frac{X_1(t)}{\mu_1} +  \tilde I^*_1(t), \ \tilde Y^*_3(t)  \doteq\tilde I^*_2(t). \label{eq:eq1235b}
\end{align}
Then $\tilde Y^*$ is an optimal control of the BCP. 
\end{lemma}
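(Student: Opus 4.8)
The plan is to establish the chain
\[
\inf_{\tilde Y}\ \mathbb{E}\left(\int_0^{\infty}e^{-\gamma t}\,c\cdot\tilde{Q}(t)\,dt\right)\ \ge\ \inf_{\tilde I}\ \mathbb{E}\left(\int_0^{\infty}e^{-\gamma t}\,\hat{h}(\tilde{W}(t))\,dt\right)\ =\ \mathbb{E}\left(\int_0^{\infty}e^{-\gamma t}\,\hat{h}(\tilde{W}^*(t))\,dt\right),
\]
where the first infimum is over all admissible controls $\tilde Y$ of the BCP and the second over all admissible controls $\tilde I$ of the EWF (the last equality being the optimality of $\tilde I^*$), and then to check that the process $\tilde Y^*$ defined by \eqref{eq:eq1235a}--\eqref{eq:eq1235b} is admissible for the BCP and attains the left-most quantity. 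Together these give that $\tilde Y^*$ is an optimal control for the BCP.

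First I would prove the inequality. Fix any admissible control $\tilde Y=(\tilde Y_1,\tilde Y_2,\tilde Y_3)'$ for the BCP and set $\tilde I_1\doteq\tilde Y_1+\tilde Y_2$, $\tilde I_2\doteq\tilde Y_3$; by \eqref{bcp2} the pair $(\tilde I_1,\tilde I_2)'$ is admissible for the EWF. A short computation using \eqref{bcp1} and the definition \eqref{eq:eq911} of $B_1,B_2$ shows that the EWF workload $\tilde W$ built from $(\tilde I_1,\tilde I_2)'$ via \eqref{workload-EWF} satisfies $\tilde W_1=\tilde Q_1/\mu_1+\tilde Q_2/\mu_2$ and $\tilde W_2=\tilde Q_2/\mu_3+\tilde Q_3/\mu_3$. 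Hence for every $t\ge 0$ the vector $(\tilde Q_1(t),\tilde Q_2(t),\tilde Q_3(t))$ is feasible for the LP \eqref{LP} with data $(w_1,w_2)=(\tilde W_1(t),\tilde W_2(t))$, so by \eqref{LP-value} we have $c\cdot\tilde Q(t)\ge\hat h(\tilde W(t))$ pathwise. Multiplying by $e^{-\gamma t}$, integrating and taking expectations, the BCP cost of $\tilde Y$ dominates the EWF cost of $(\tilde I_1,\tilde I_2)'$, which is at least the optimal EWF cost; the infimum over $\tilde Y$ then gives the claim.

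Next I would show $\tilde Y^*$ realises this bound. Adaptedness of $\tilde Y^*$ follows from that of $X$, $\tilde I^*$, $\tilde W^*$ and progressive measurability of $\{t:\mu_3\tilde W_2^*(t)<\mu_2\tilde W_1^*(t)\}$ and its complement; moreover the two branches \eqref{eq:eq1235a}, \eqref{eq:eq1235b} agree on $\{\mu_3\tilde W_2^*=\mu_2\tilde W_1^*\}$ (both force $\tilde Y_1^*=-X_1/\mu_1$, $\tilde Y_2^*=X_1/\mu_1+\tilde I_1^*$, $\tilde Y_3^*=\tilde I_2^*$ there), so $\tilde Y^*$ inherits the path regularity of its ingredients. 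By direct substitution $\tilde Y_1^*+\tilde Y_2^*=\tilde I_1^*$ and $\tilde Y_3^*=\tilde I_2^*$ in both cases, so \eqref{bcp2} holds for $\tilde Y^*$. The one substantive computation is to plug \eqref{eq:eq1235a}--\eqref{eq:eq1235b} into \eqref{bcp1} and verify, in each of the two cases and using $\tilde W_i^*=B_i+\tilde I_i^*$, that the resulting $\tilde Q^*(t)$ equals exactly the optimal LP solution $(q_1^*,q_2^*,q_3^*)$ of \eqref{LP-soln} evaluated at $(w_1,w_2)=(\tilde W_1^*(t),\tilde W_2^*(t))$. This simultaneously yields $\tilde Q^*(t)\ge 0$, so $\tilde Y^*$ is BCP-admissible, and $c\cdot\tilde Q^*(t)=\hat h(\tilde W^*(t))$ for all $t$ since $(q_1^*,q_2^*,q_3^*)$ is LP-optimal. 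Hence the BCP cost under $\tilde Y^*$ equals the optimal EWF cost, which by the first step is no larger than the optimal BCP cost; so $\tilde Y^*$ is optimal for the BCP.

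I do not expect a genuine analytic difficulty: the argument is entirely pathwise and algebraic once the LP value \eqref{LP-value} and solution \eqref{LP-soln} are available. The two points needing care are (i) confirming that the EWF workload associated with the control $(\tilde Y_1+\tilde Y_2,\tilde Y_3)'$ coincides with the workload implicit in the BCP queue-length identities, so the cost functionals can be compared termwise, and (ii) checking the consistency of the piecewise definition of $\tilde Y^*$ across the boundary set $\{\mu_3\tilde W_2^*=\mu_2\tilde W_1^*\}$, which is what makes $\tilde Y^*$ a bona fide adapted process of the required regularity rather than merely a measurable selection. These are precisely the verifications carried out in Section 3.1 of \cite{AA}.
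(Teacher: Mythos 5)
Your argument is correct and is precisely the standard reduction (the lower bound $c\cdot\tilde Q\ge\hat h(\tilde W)$ via LP feasibility plus verification that $\tilde Y^*$ produces the pointwise LP minimizer, with continuity across $\{\mu_3\tilde W_2^*=\mu_2\tilde W_1^*\}$ checked directly) that the paper delegates to Section 3.1 of \cite{AA}. No genuinely different route is taken and no gap is present.
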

We note that if $\tilde Y^*$ is an optimal solution of the BCP, then the corresponding optimal queue length is 
\begin{align}
\tilde{Q}^*_1(t)&=\frac{\mu_1}{\mu_2}(\mu_2\tilde{W}^*_1(t)-\mu_3\tilde{W}^*_2(t)), \ \tilde{Q}^*_2(t)=\mu_3\tilde{W}^*_2(t), \ \tilde{Q}^*_3(t)=0, \label{eq:eq1237a}
\end{align}
if $\mu_3\tilde{W}^*_2(t) < \mu_2\tilde{W}^*_1(t)$, and 
 \begin{align}
\tilde{Q}^*_1(t)&=0,\ \tilde{Q}^*_2(t) =\mu_2\tilde{W}^*_1(t),\ \tilde{Q}^*_3(t) =\mu_3\tilde{W}^*_2(t)-\mu_2\tilde{W}^*_1(t), 
\label{eq:eq1237b}
\end{align}
if $\mu_3\tilde{W}^*_2(t) \ge \mu_2\tilde{W}^*_1(t)$.

Although the BCP and EWF here are the same as those in \cite{AA},  the solutions to these problems are much less straightforward than 
in \cite{AA}.  The latter paper considers the Case IIA, where, in particular, $c_2\mu_2-c_3\mu_2 \geq 0$ and
$c_2\mu_2-c_1\mu_1 \geq 0$. In their setting, $\hat{h}$ is a non-decreasing function of both its arguments and consequently, the solution of the EWF is trivial, in that it is given by the solution of the one-dimensional Skorohod problem (see Appendix \ref{SP} for the definition and properties of Skorohod problem). However for the regime considered here, $\hat{h}$ no longer has the above monotonicity property and thus a simple closed form solution of the EWF or BCP is not available. The BCP in Cases IIB
and IIC has been investigated in \cite{AK} where using certain optimal stopping problems a solution of the EWF has been provided
in terms of the free boundary associated with the control problem.
Below we summarize some key results from \cite{AK} that will be needed here. 

Consider the workload control problem in Definition \ref{def:ewf} corresponding to an arbitrary initial condition.
 More precisely, letting $B$ and
$(\Om, \clf, \{\clf_t\}_{t\ge 0}, \PP)$ be as in Definition \ref{def:ewf}, and $w = (w_1,w_2)' \in \mathbb{R}_+^2$, the control problem is to find an $\RR_+^2$-valued $\{\clf_t\}$-adapted non-decreasing process $\tilde I^w = (\tilde I_1^w, \tilde I_2^w)'$ to minimize 
\be\label{objective-general}
\mathbb{E}\biggl(\int_0^{\infty}e^{-\gamma t}\hat{h}(\tilde{W}^w(t))dt\biggr)
\ee
subject to the following conditions: For $t\ge 0,$
\begin{equation}\label{workload-general}
\begin{aligned}
&0 \le \tilde{W}_1^w(t)\doteq w_1+ B_1(t)+\tilde{I}_1^w(t),\\
&0 \le \tilde{W}_2^w(t)\doteq w_2+B_2(t)+\tilde{I}^w_2(t).
\end{aligned}
\end{equation}
We refer to  $\tilde{I}^w = (\tilde{I}_1^w, \tilde{I}_2^w)'$ as an admissible control for the initial condition $w$.
Define the optimal value function as:
\begin{align}\label{valuebcp}
J^*(w)= \inf_{\tilde{I}^w}\mathbb{E}\biggl(\int_0^{\infty}e^{-\gamma t}\hat{h}(\tilde{W}^w(t))dt\biggr),\; w \in \RR_+^2,
\end{align}
where the infimum is taken over all admissible controls for the initial condition $w$.

From Theorem 3.1. of \cite{AK}, $J^*$ is a $C^1$ function on $\RR_+^2$. Now define for 
$w_2 \in \mathbb{R}_+$, $\Psi: \RR_+ \to \RR_+$ as in \eqref{imfcn}.
The following result is taken from \cite{AK}.
\begin{lemma}[Lemma 5.1. of \cite{AK}]\label{imp-fcn} The function $\Psi$ has the following properties.
\begin{itemize}
\item[\rm (i)] For all $w_2 \ge 0$, $0\leq \Psi(w_2)\leq \frac{\mu_3}{\mu_2}w_2$.
\item[\rm (ii)] $\Psi$ is non-decreasing and Lipschitz continuous with Lipschitz constant bounded by $\frac{\mu_3}{\mu_2}$.
\item[\rm (iii)] $\displaystyle\lim_{w_2\rightarrow \infty}\Psi(w_2)=\infty$. 
\end{itemize}
\end{lemma}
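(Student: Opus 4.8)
The plan is to obtain all three items from the following ingredients, all available once one has the HJB variational inequality characterization of $J^*$ together with its $C^1$-regularity (Theorem 3.1 of \cite{AK}) and the structure of the continuation region $G=\{w_1\ge\Psi(w_2)\}$ recalled in the excerpt: the monotonicity $\partial_{w_1}J^*\ge0$, $\partial_{w_2}J^*\ge0$ on $\RR_+^2$; the explicit piecewise-affine form \eqref{LP-value} of $\hat h$; and the structural features of $\hat h$ that it generates. Namely: on the ``lower'' region $\clr^-=\{\mu_2w_1>\mu_3w_2\}$ one has $\partial_{w_1}\hat h=c_1\mu_1>0$ and $\partial_{w_2}\hat h=\tfrac{\mu_3}{\mu_2}(c_2\mu_2-c_1\mu_1)\ge0$ (the latter by Assumption \ref{regime-c}), whereas on $\clr^+=\{\mu_2w_1<\mu_3w_2\}$ one has $\partial_{w_1}\hat h=c_2\mu_2-c_3\mu_2<0$; and along the direction $\nu=(\tfrac{\mu_3}{\mu_2},1)'$ of the interface, the derivative of $\hat h$ equals the single constant $\mu_3c_2$ on both pieces, so that $\hat h(w+h\nu)=\hat h(w)+\mu_3c_2h$ for all $w\in\RR_+^2$, $h\ge0$. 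Monotonicity of $J^*$ itself I would establish by a ``rapid-push'' coupling: given a near-optimal control for $w'\ge w$, precede it by controls that drive the workload from $w$ up to $w'$ over a short interval $[0,\eps]$ at cost $O(\eps)$, and let $\eps\downarrow0$; this gives $J^*(w)\le J^*(w')$. In particular $\Psi(w_2)\ge0$ is immediate, with the convention $\sup\emptyset=0$.

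For (i) only the upper bound needs an argument, and it amounts to showing $\partial_{w_1}J^*>0$ everywhere on $\clr^-$: then no point of $\clr^-$ belongs to the zero set defining $\Psi$, so $\Psi(w_2)\le\tfrac{\mu_3}{\mu_2}w_2$. Given $\partial_{w_1}J^*\ge0$, it suffices to exclude the value $0$. The mechanism is that on $\clr^-$ the running cost genuinely decreases when $w_1$ is decreased ($\partial_{w_1}\hat h=c_1\mu_1>0$), and a decrease of the initial $w_1$ by $\del$ can be realized dynamically: from a near-optimal control for $(w_1+\del,w_2)$ one constructs a control for $(w_1,w_2)$ by lowering $\tilde W_1$ by $\del$ (with a Skorohod correction at $\{w_1=0\}$); on the event that the $(w_1+\del,w_2)$-workload stays at least $\del$ inside $\clr^-$ and has not yet needed that correction, the new running cost is smaller by exactly $c_1\mu_1\del$. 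Near a point $(w_1,w_2)$ of $\clr^-$ this event has strictly positive discounted expected duration (and the correction contributes $o(\del)$), giving $J^*(w_1,w_2)\le J^*(w_1+\del,w_2)-\kappa\del$ for some $\kappa>0$, hence $\partial_{w_1}J^*(w_1,w_2)\ge\kappa>0$. Close to the interface $\{\mu_2w_1=\mu_3w_2\}$, where this coupling is weak, I would instead use the viscosity HJB inequality at a point of the action region together with the flatness of $J^*$ in the $w_1$-direction there, as in \cite{AK}, to rule out $\partial_{w_1}J^*=0$ in $\clr^-$.

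For (ii) the Lipschitz constant $\tfrac{\mu_3}{\mu_2}$ is exactly the slope of the interface, which is no accident: the $\nu$-affinity of $\hat h$ shows, after the substitution $W=\tilde W+h\nu$, that the control problem from $w+h\nu$ is the same as the one from $w$ but with the state constraint relaxed from $W\ge0$ to $W\ge-h\nu$ and with the constant $\mu_3c_2h/\gamma$ added to the value. Since relaxing the state constraint cannot create a need for the $e_1$-control where there was none, the continuation region $G=\{\partial_{w_1}J^*>0\}$ is stable under translation by $h\nu$, $h>0$; combined with the structure $\{w_1:\partial_{w_1}J^*(w_1,w_2)=0\}=[0,\Psi(w_2)]$ this yields $\Psi(w_2+h)\le\Psi(w_2)+\tfrac{\mu_3}{\mu_2}h$. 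For monotonicity of $\Psi$ I would prove the complementary statement that the action region $\{\partial_{w_1}J^*=0\}$ is stable under increasing $w_2$ --- economically, extra work held in Buffer~$3$ only makes idling Server~$1$ more attractive --- by a comparison of the optimally controlled workloads, which gives $\Psi(w_2)\le\Psi(w_2+h)$; together with the previous bound, $\Psi$ is Lipschitz with constant $\tfrac{\mu_3}{\mu_2}$.

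For (iii) I would argue by contradiction: if $\Psi\le M$ then $\{w_1>M\}\subseteq G$ and $J^*$ solves the linear elliptic HJB equation $\gamma J^*-\cll J^*=\hat h$ there. As $w_2\to\infty$, $\{w_1>M\}\cap\clr^+$ contains horizontal segments $(M,\tfrac{\mu_3}{\mu_2}w_2)$ of unbounded length, on which the source satisfies $\partial_{w_1}\hat h=c_2\mu_2-c_3\mu_2<0$ while $\partial_{w_1}J^*\ge0$; comparing $J^*$ on a large sub-box with the solution of the same equation that vanishes on the boundary of the box (and which becomes arbitrarily negative in its interior as the box grows) forces $\partial_{w_1}J^*<0$ somewhere, a contradiction, so $\Psi(w_2)\to\infty$. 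Since $J^*$ is only $C^1$, this step is carried out in the viscosity-solution framework of \cite{AK}. I expect the strict positivity of $\partial_{w_1}J^*$ on $\clr^-$ behind (i), and the stability/ordering properties of $G$ behind (ii), to be the main obstacles: they require upgrading the qualitative monotonicity of $J^*$ to one-sided quantitative estimates while controlling the Skorohod corrections at $\{w_1=0\}$, which is exactly where the usual smooth-fit arguments break down because $C^2$-regularity is unavailable.
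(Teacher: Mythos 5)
The paper does not actually prove this lemma: it is quoted verbatim as Lemma~5.1 of \cite{AK} and used as an imported result, so there is no in-paper argument to compare your sketch against. The cited source (Budhiraja--Ross, as its title suggests) establishes these properties by reformulating the singular control problem through a family of optimal stopping problems for $\partial_{w_1}J^*$ and reading off the regularity, monotonicity, and Lipschitz continuity of $\Psi$ from the structure of the stopping region; your sketch instead works directly at the level of the singular control problem via pathwise perturbation and coupling, which is a genuinely different route.

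On the substance, parts (i) and (iii) are plausible in outline, though both defer the real work (the behaviour of $\partial_{w_1}J^*$ near the interface $\{\mu_2 w_1=\mu_3 w_2\}$ and near $\{w_1=0\}$, and the viscosity-solution version of the box comparison) to ``as in \cite{AK}'' without filling it in. Part (ii) has a genuine gap. The Lipschitz bound hinges on the unproven assertion that, after the translation $\tilde W = W - h\nu$ which relaxes the constraint from $\tilde W\ge 0$ to $\tilde W\ge -h\nu$, ``relaxing the state constraint cannot create a need for the $e_1$-control where there was none,'' i.e.\ that the continuation region $\{\partial_{w_1}J^*>0\}$ is monotone under enlarging the domain. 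This is a nontrivial domain-monotonicity statement for a two-dimensional singular control problem with two reflection directions; it does not follow from the $\nu$-affinity of $\hat h$ or from the monotonicity of $J^*$ itself, and it is precisely the kind of structural fact that the optimal-stopping reformulation of \cite{AK} is designed to deliver. Similarly, the non-decrease of $\Psi$ is asserted on heuristic economic grounds (``extra work in Buffer~3 makes idling more attractive'') without a comparison proof. As written, (ii) — including the specific constant $\mu_3/\mu_2$ — is not established.
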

The following result from \cite{AK} gives an optimal solution of the workload control problem in Definition \ref{def:ewf}. 
\begin{theorem}[Theorem 5.2. of \cite{AK}]\label{larger-control} For $t\ge 0$, define 
\begin{equation}
\begin{aligned}\label{workload-general-solu}
{W}^*_1(t)&= B_1(t) + \sup_{0\leq s \leq t}\left[B_1(s)-\Psi({W}^*_2(s))\right]^-\,,\\
{W}^*_2(t)&= B_2(t)+\sup_{0\leq s \leq t}\left[B_2(s)\right]^-\, ,
\end{aligned}
\end{equation}
where for $z \in \RR$, $z^{-} = -\min\{0,z\}$. Then the minimum value of \eqref{workload-obj} over all admissible controls
is given as 
\begin{align}\label{optimal-limit-soln}
J^*(0)=\mathbb{E}\biggl(\int_0^{\infty}e^{-\gamma t}\hat{h}(W^*(t))dt\biggr)\,.
\end{align}
Thus 
\be\label{optimal-idle}
{I}^*_1(t)  = \sup_{0\leq s \leq t}\left[B_1(s)-\Psi({W}^*_2(s))\right]^-,\; 
{I}^*_2(t)  = \sup_{0\leq s \leq t}\left[B_2(s)\right]^-. 
\ee
is an optimal control for the EWF.
\end{theorem}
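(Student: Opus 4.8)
The plan is a verification argument based on the regularity of the value function $J^*$ from \cite{AK}. Write $\cll$ for the infinitesimal generator of the two-dimensional Brownian motion $B=(B_1,B_2)'$ of Definition \ref{def:ewf}. Recall from \cite{AK} that $J^*\in C^1(\RR_+^2)$, that it is the value function \eqref{valuebcp}, and that it satisfies, in the interior of $\RR_+^2$, the singular-control variational inequality
\[
\min\bigl\{\gamma J^*-\cll J^*-\hat h,\ \partial_{w_1}J^*,\ \partial_{w_2}J^*\bigr\}=0,
\]
with $\partial_{w_1}J^*\equiv0$ on $\{w_1\le\Psi(w_2)\}$, $\partial_{w_2}J^*\equiv0$ on $\partial_2 G=\{w_2=0\}$, and $\gamma J^*-\cll J^*=\hat h$ a.e.\ on $G=\{w_1\ge\Psi(w_2)\}$ (the second-order relations hold only a.e., since $J^*$ is not $C^2$ across $\partial_1 G$). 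Given this, the theorem reduces to: (a) well-posedness of \eqref{workload-general-solu} with $W^*$ taking values in $\overline G$; and (b) that the control $I^*$ of \eqref{optimal-idle} attains $J^*(0)$ --- the reverse inequality $\EE\int_0^\infty e^{-\gamma t}\hat h(W^*(t))\,dt\ge J^*(0)$ being automatic from \eqref{valuebcp} and the admissibility of $I^*$.

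For (a): the second line of \eqref{workload-general-solu} is the one-dimensional Skorohod map of $B_2$ reflected at $0$, so $W^*_2\ge0$, and $I^*_2$ is continuous, nondecreasing and flat off $\{W^*_2=0\}$ (Appendix \ref{SP}). The map $s\mapsto\Psi(W^*_2(s))$ is then continuous --- in fact locally Lipschitz in $t$, as $\Psi$ is Lipschitz (Lemma \ref{imp-fcn}(ii)) and $W^*_2$ is continuous --- so the first line of \eqref{workload-general-solu} is the one-dimensional Skorohod problem for $B_1$ reflected to stay above the moving barrier $\Psi(W^*_2(\cdot))$; subtracting the barrier reduces it to a standard Skorohod map at $0$, yielding existence and pathwise uniqueness of $W^*_1$. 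Taking $s=t$ in the running supremum gives $W^*_1(t)\ge\Psi(W^*_2(t))\ge0$, so $W^*$ is a continuous $\overline G$-valued semimartingale, and $I^*_1$ is flat off $\partial_1 G=\{W^*_1=\Psi(W^*_2)\}$; in particular $I^*=(I^*_1,I^*_2)'$ is admissible for the EWF.

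For (b): mollify $J^*$ to $J_\delta=J^*\ast\rho_\delta$ and apply It\^o's formula to $e^{-\gamma t}J_\delta(W^*(t))$; after taking expectations,
\[
e^{-\gamma t}\EE J_\delta(W^*(t))-J_\delta(W^*(0))=\EE\!\int_0^t e^{-\gamma s}\bigl(\cll J_\delta-\gamma J_\delta\bigr)(W^*(s))\,ds+\EE\!\int_0^t e^{-\gamma s}\nabla J_\delta(W^*(s))\cdot dI^*(s).
\]
Now let $\delta\to0$. The terms involving $\nabla J_\delta$ converge by the $C^1$-convergence $J_\delta\to J^*$, the at most linear growth of $J^*$, and the bound $\EE|W^*(s)|+\EE I^*(s)=O(1+s)$ from the Skorohod representation; the drift term converges once the second-order coefficients of $J_\delta$ are dominated along the trajectory, which is where the finer regularity of $J^*$ from \cite{AK} is used. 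Since $W^*(s)\in\overline G$ for all $s$ and $W^*$ spends Lebesgue-a.e.\ time in the interior of $G$, we get $\cll J^*-\gamma J^*=-\hat h$ at $W^*(s)$ for a.e.\ $s$; moreover $I^*_1$ is flat off $\partial_1 G$ where $\partial_{w_1}J^*=0$ and $I^*_2$ is flat off $\partial_2 G$ where $\partial_{w_2}J^*=0$, so both boundary integrals vanish. Hence $J^*(0)=\EE\int_0^t e^{-\gamma s}\hat h(W^*(s))\,ds+e^{-\gamma t}\EE J^*(W^*(t))$, and letting $t\to\infty$ --- the last term tends to $0$ by linear growth of $J^*$ and $\EE|W^*(t)|=O(1+t)$ --- yields $J^*(0)=\EE\int_0^\infty e^{-\gamma t}\hat h(W^*(t))\,dt$, i.e.\ \eqref{optimal-limit-soln}; with (a) this gives \eqref{optimal-idle}. (Running the same computation for a general admissible pair produces only the inequalities $\cll J^*-\gamma J^*\ge-\hat h$, $\partial_{w_i}J^*\ge0$ and hence $\EE\int_0^\infty e^{-\gamma s}\hat h(\tilde W^w(s))\,ds\ge J^*(w)$, reconfirming optimality.)

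The main obstacle is the passage $\delta\to0$ in the drift term in the absence of $C^2$ regularity. By interior elliptic regularity $J^*$ is $C^2$ inside $G$ (away from the line $\mu_3w_2=\mu_2w_1$) and in the interior of $G^c$, but only $C^1$ across the free boundary $\partial_1 G$; one must show that the occupation measure of $W^*$ (and of a general controlled workload) on the Lipschitz curve $\partial_1 G$ produces no uncontrolled singular term as $\delta\to0$. Since $\partial_{w_1}J^*$ vanishes on $\partial_1 G$, the jump of the normal derivative of $J^*$ across $\partial_1 G$ is zero, so only an absolutely continuous second-order term should survive; making this rigorous requires the sharp one-sided bounds on $D^2J^*$ from \cite{AK} together with a Meyer--It\^o/It\^o--Krylov formula for functions that are $C^1$ and piecewise $C^2$ across a Lipschitz interface. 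An alternative --- essentially the route of \cite{AK} --- bypasses the singular-control verification by differentiating the variational inequality in $w_1$: $u:=\partial_{w_1}J^*$ solves the obstacle problem $\min\{\gamma u-\cll u-\partial_{w_1}\hat h,\ u\}=0$, an optimal stopping problem whose continuation region is $G$ and whose optimal boundary is $\Psi$, and one then verifies optimality of the Skorohod-map process $W^*$ through this optimal stopping value, which needs only the $C^1$ regularity of $J^*$ and standard one-dimensional Skorohod estimates.
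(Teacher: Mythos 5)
First, the paper contains no proof of Theorem \ref{larger-control}: it is imported verbatim as Theorem 5.2 of \cite{AK}, and \cite{AK} proves it by the ``alternative route'' you sketch at the end --- differentiating the variational inequality and studying the optimal stopping problem solved by $u=\partial_{w_1}J^*$, whose continuation region is $G$ --- precisely because a direct verification theorem is problematic here without $C^2$-regularity. So you are proposing an argument different from the one the cited source actually uses, and there is no ``paper proof'' in this document to compare against.

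Your part (a) is correct and reproduces the identity \eqref{eq:eq1100}. Part (b) is a reasonable outline, but your heuristic for closing the $\delta\to0$ gap is not sound as written. You argue that since $\partial_{w_1}J^*=0$ on $\partial_1 G$, ``the jump of the normal derivative of $J^*$ across $\partial_1 G$ is zero, so only an absolutely continuous second-order term should survive.'' The continuity of $\nabla J^*$ across $\partial_1 G$ is just the statement $J^*\in C^1$ and has nothing to do with $\partial_{w_1}J^*$ vanishing there, and the paper explicitly emphasizes (Introduction, discussion of \cite{AK}) that $C^2$-regularity and smooth fit \emph{fail} in this regime, so $D^2J^*$ does in general jump across the Lipschitz free boundary. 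To conclude that $\cll J_\delta(W^*(s))\to\cll J^*(W^*(s))$ in the required sense you need two ingredients that your sketch does not supply: (i) local two-sided boundedness of $D^2J^*$ near $\partial_1 G$, and (ii) that the Lebesgue occupation measure of $W^*$ on the Lipschitz curve $\partial_1 G$ vanishes --- an It\^o--Krylov-type occupation-density estimate for a reflected process on a merely Lipschitz domain, which is not automatic. This is exactly the obstruction that pushed \cite{AK} away from the singular-control verification; as it stands your part (b) is a plan with a non-trivial unfilled hole. Either supply (i) and (ii) in full, or carry the optimal stopping route --- which only uses $C^1$-regularity and one-dimensional Skorohod estimates --- through to completion.
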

Consider the one dimensional Skorohod map $\Gamma: \cld_1 \to D([0,\infty):\RR_+)$ defined as 
$$\Gamma(f)(t) \doteq f(t) + \sup_{0\leq s\leq t} (f(s))^{-}, \; f \in \cld_1, \; t \ge 0.$$
 Then for $t\ge 0,$
\begin{align}\label{eq:eq1100}
{W}^*_2(t)& \doteq\Gamma (B_2)(t),\;
{W}^*_1(t) \doteq\Gamma(B_1-\Psi({W}^*_2))(t)+\Psi({W}^*_2(t))\,.
\end{align}
Recall
$
G = \{x\in \RR^2_+: x_1 \ge \Psi(x_2)\}.
$
Clearly,
\be\label{optimal-workload}
W^*(t)\in G \ \ \mbox{for all $t\ge 0.$}
\ee 
Roughly speaking, the process $W^*$ behaves like the  Brownian motion $(B_1, B_2)'$ in the interior of $G$ and it is reflected on the boundary of $G$, where the directions of reflection on $\{x\in \RR^2_+: x \cdot e_2 =0\}$ and $\{x\in \RR^2_+: x_1 =\Psi(x_2)\}$ are $e_2$ and $e_1$, respectively.

Combining Theorem \ref{larger-control} with Lemma \ref{BCP-EWF} we have the following corollary.
\begin{corollary}\label{optimal-solns}
Let $W^* = (W^*_1, W^*_2)'$ and $I^*=(I^*_1, I^*_2)'$ be as in Theorem \ref{larger-control}. Then $Y^*$  defined by \eqref{eq:eq1235a} - \eqref{eq:eq1235b} (replacing $\tilde I^*$ there with $I^*$) is an optimal control for the BCP and
$Q^*$ defined by \eqref{eq:eq1237a} - \eqref{eq:eq1237b} (replacing $\tilde W^*$ there by $W^*$) is the corresponding optimally controlled state process.
\end{corollary}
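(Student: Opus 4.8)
The plan is to obtain the corollary by feeding the output of Theorem \ref{larger-control} into Lemma \ref{BCP-EWF}. First I would observe that, by Theorem \ref{larger-control}, the process $I^* = (I^*_1, I^*_2)'$ of \eqref{optimal-idle} is an optimal control for the EWF, and that the workload it generates through \eqref{workload-EWF} is exactly the process $W^* = (W^*_1, W^*_2)'$ of \eqref{workload-general-solu}: comparing the two displays, $W^*_1 = B_1 + I^*_1$ and $W^*_2 = B_2 + I^*_2$, so $W^*$ is precisely ``the corresponding optimal workload'' in the sense of the parenthetical in Lemma \ref{BCP-EWF}. The admissibility requirements — nonnegativity of $W^*$, monotonicity of $I^*_1, I^*_2$, the initial conditions $I^*_i(0)=0$, and $\{\clf_t\}$-adaptedness — are all part of the assertion of Theorem \ref{larger-control}; they rely on $W^*_2 = \Gamma(B_2) \ge 0$ and $W^*_1 = \Gamma(B_1 - \Psi(W^*_2)) + \Psi(W^*_2) \ge 0$, using $\Psi \ge 0$ from Lemma \ref{imp-fcn} and the elementary properties of the one-dimensional Skorohod map recalled in Appendix \ref{SP}.

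Second, I would apply Lemma \ref{BCP-EWF} with $\tilde I^* = I^*$ and the corresponding optimal workload $\tilde W^* = W^*$. The lemma then directly yields that the process $Y^*$ obtained from the right-hand sides of \eqref{eq:eq1235a}–\eqref{eq:eq1235b} upon replacing $\tilde I^*$ by $I^*$ (and $\tilde W^*$ by $W^*$ in the case split) is an optimal control for the BCP. This is the first assertion of the corollary, with no further work required.

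Finally, for the queue-length identities I would invoke the computation recorded immediately after Lemma \ref{BCP-EWF}: substituting the explicit formulas for $Y^*$ into the defining relations \eqref{bcp1}, namely $Q^*_i = X_i + \mu_i Y^*_i$ for $i=1,2$ and $Q^*_3 = X_3 + \mu_3 Y^*_3 - \mu_2 Y^*_2$, and using $W^*_1 = X_1/\mu_1 + X_2/\mu_2 + I^*_1$, $W^*_2 = X_2/\mu_3 + X_3/\mu_3 + I^*_2$ together with the structure of the LP solution \eqref{LP-soln}, gives \eqref{eq:eq1237a} on the event $\{\mu_3 W^*_2 < \mu_2 W^*_1\}$ and \eqref{eq:eq1237b} on $\{\mu_3 W^*_2 \ge \mu_2 W^*_1\}$. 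Since every step is an appeal to an already-established result combined with an elementary algebraic substitution, there is no genuine obstacle here; the single point deserving a line of care is the verification that the process $W^*$ defined implicitly in \eqref{workload-general-solu} coincides with the workload generated by $I^*$ in \eqref{workload-EWF} — and, as noted above, this is immediate from the displayed formulas and the identity \eqref{eq:eq1100}.
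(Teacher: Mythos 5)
Your proof is correct and is essentially the paper's own argument: the corollary is obtained by feeding the optimal EWF control $I^*$ and workload $W^*$ from Theorem \ref{larger-control} into Lemma \ref{BCP-EWF} and its following paragraph, exactly as you describe. Indeed the paper introduces the corollary with the single sentence ``Combining Theorem \ref{larger-control} with Lemma \ref{BCP-EWF} we have the following corollary,'' so no further review is needed.
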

The solution to the BCP given above suggests the following control policy for the $n$-th network. Note that the set
$\{Q_3^*(t) < \frac{\mu_2}{\mu_1} Q_1^*(t)\}$ equals $\{\mu_3 W_2^*(t) < \mu_2 W_1^*(t)\}$.  Since on this set $Q_3^*(t) =0$,
a good policy for the $n$-th network should keep $\hat Q^n_3(t)$ close to $0$ when $Q^n_3(t) < \frac{\mu_2^n}{\mu_1^n}Q_1^n(t)$.
Similarly, when $Q^n_3(t) \ge \frac{\mu_2^n}{\mu_1^n}Q_1^n(t)$ the policy should ensure that $\hat Q^n_1(t)$ is close to $0$.
This motivates the thresholds $L^n = \lfloor l_0 \log n\rfloor$ and $C^n = \lfloor c_0 \log n\rfloor$ introduced above Definition \ref{propolicy}. When $Q_3^n(t) - \frac{\mu_2^n}{\mu_1^n}Q_1^n(t) < L^n$, under the policy in Definition  \ref{propolicy}, 
Server 1 processes Class 2 customers (leading to an increase in $Q_3^n(t)$) only when $Q_3^n(t) < C^n-1$.  Thus $C^n$
can be interpreted as the level of `safety stock'  that prevents idleness of Server 2.  Since in the diffusion scaling the 
safety stock levels approach $0$ (i.e. $C^n/\sqrt{n} \to 0$ as $n \to \infty$) the policy ensures that $\hat Q^n_3$ is close to $0$ in this regime.  Similarly when $Q_3^n(t) - \frac{\mu_2^n}{\mu_1^n}Q_1^n(t) \ge L^n$, Server 1 processes Class 1 jobs as soon as
$Q_1^n(t) \ge \frac{\mu_1^n}{\mu_2^n} (C^n-L^n+2)$, ensuring that in this regime $\hat Q^n_1$ remains close to $0$.
We refer the reader to Corollary \ref{impcor} for a convergence result that makes these statements precise.  Finally, \eqref{optimal-workload} suggests that under a near optimal policy the condition $\hat W^n(t) \in G$ for all $t$ should be
satisfied approximately for large $n$.  As shown in Lemma \ref{lower-bound-est}, the policy in Definition \ref{propolicy}
satisfies this property. The proof of this lemma relies on the key idleness property formulated in the last line of
Definition  \ref{propolicy}.

\section{Proof of  asymptotic optimality }
\label{sec:sec5}
In this section we  prove Theorem \ref{main-result} which gives the asymptotic optimality of the policy proposed in Definition \ref{propolicy} for a suitable choice of threshold parameters $g_0,c$ and $l_0$. We begin with the
following result which is an immediate consequence of Theorem 3.1 of \cite{AA1}.
\begin{theorem}\label{main1}
Let for $n\ge 1$, $\tilde T^n$ be an admissible control policy for the $n$-th network. Then with $J^*(0)$ as in \eqref{optimal-limit-soln}, we have,
$\liminf_{n \rightarrow \infty}\hat{J}^n(\tilde T^n)\geq J^*(0)\,.$
\end{theorem}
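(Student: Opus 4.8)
\textbf{Proof plan for Theorem \ref{main1} (the lower bound $\liminf_n \hat J^n(\tilde T^n) \ge J^*(0)$).}

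The plan is to reduce the statement to the lower bound half of the diffusion approximation for the crisscross network established in \cite{AA1}, which says that \emph{every} sequence of admissible controls is asymptotically no better than the value of the associated Brownian control problem, and then to identify the value of that BCP with $J^*(0)$ using the equivalence between the BCP and the EWF. Concretely, Theorem 3.1 of \cite{AA1} (applied to the network of Section \ref{sec:qnet} with the discounted holding cost \eqref{cf}) gives $\liminf_{n\to\infty}\hat J^n(\tilde T^n) \ge \inf_{\tilde Y} \EE\big(\int_0^\infty e^{-\gamma t} c\cdot \tilde Q(t)\,dt\big)$, the infimum being over all admissible controls $\tilde Y$ for the BCP of Definition \ref{bcp}; note this is a genuine lower bound valid for arbitrary admissible $\{\tilde T^n\}$, with no structural assumptions, which is exactly what is needed here.

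The remaining step is purely at the level of the limit (Brownian) problems: one must show $\inf_{\tilde Y}\EE\big(\int_0^\infty e^{-\gamma t} c\cdot\tilde Q(t)\,dt\big) = J^*(0)$. For ``$\ge$'': given any admissible $\tilde Y$ for the BCP, the induced $\tilde I = (\tilde Y_1+\tilde Y_2, \tilde Y_3)'$ is admissible for the EWF (it is nondecreasing, starts at $0$, and keeps $\tilde W$ in $\RR_+^2$ via \eqref{bcp1}--\eqref{workload-EWF} and the definition \eqref{eq:eq911} of $B$), and since $\hat h(\tilde W(t))$ is by \eqref{LP} the \emph{minimum} of $c\cdot q$ over all $q\ge 0$ consistent with the workload $\tilde W(t)$, while $\tilde Q(t)$ is one such feasible $q$, we get $c\cdot\tilde Q(t)\ge \hat h(\tilde W(t))$ pointwise; integrating and taking infima gives $\inf_{\tilde Y}\EE(\cdots)\ge J^*(0)$. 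For ``$\le$'': take the optimal EWF control $I^*$ from Theorem \ref{larger-control} and the associated $Y^*,Q^*$ from Corollary \ref{optimal-solns}; then $Y^*$ is admissible for the BCP and, by \eqref{eq:eq1237a}--\eqref{eq:eq1237b}, $c\cdot Q^*(t) = \hat h(W^*(t))$, so $\EE\big(\int_0^\infty e^{-\gamma t} c\cdot Q^*(t)\,dt\big) = J^*(0)$ by \eqref{optimal-limit-soln}. This is precisely the content of Lemma \ref{BCP-EWF} together with \eqref{LP-value}--\eqref{LP-soln}, so the two inequalities combine to give the claimed identity, and chaining with the \cite{AA1} bound yields the theorem.

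I do not expect a genuine obstacle here, since the result is explicitly flagged in the excerpt as ``an immediate consequence of Theorem 3.1 of \cite{AA1}''; the only points requiring minor care are (a) checking that the hypotheses of Theorem 3.1 of \cite{AA1} are met — i.e.\ that Assumptions \ref{rates-c}, \ref{htc} supply the rate convergence and heavy-traffic conditions that paper needs, and that the cost \eqref{cf} is of the form treated there (it is, being the same discounted linear holding cost) — and (b) the measure-theoretic bookkeeping in passing $c\cdot\tilde Q(t)\ge \hat h(\tilde W(t))$ through the expectation and the $\int_0^\infty e^{-\gamma t}\,dt$, which is routine by monotonicity of the integral and nonnegativity of the integrands. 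The mild subtlety worth a sentence in the write-up is that the infimum over BCP controls is attained (by $Y^*$), so the lower bound is in fact an equality $\inf_{\tilde Y}\EE(\cdots)=J^*(0)$, and hence Theorem \ref{main1} together with the forthcoming upper bound (Theorem \ref{main2}) will pin down $\lim_n \hat J^n(T^n)$ exactly.
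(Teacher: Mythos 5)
Your proposal is correct and follows essentially the same route the paper takes: the paper offers no explicit argument beyond the observation that the bound is an immediate consequence of Theorem 3.1 of \cite{AA1}, and your write-up simply makes precise the two implicit sub-steps — that \cite{AA1} gives $\liminf_n \hat J^n(\tilde T^n)$ bounded below by the BCP value, and that this BCP value coincides with $J^*(0)$ via the LP/EWF reduction (Lemma \ref{BCP-EWF}, Theorem \ref{larger-control}, Corollary \ref{optimal-solns}). The one thing worth trimming in a final write-up is that for the stated inequality you only need the direction $V_{\mathrm{BCP}}\ge J^*(0)$ (admissible BCP controls induce admissible EWF controls and $c\cdot\tilde Q\ge\hat h(\tilde W)$ pointwise by definition of the LP); the reverse direction, while true and used later for Theorem \ref{main2}, is not needed here.
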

The above theorem  says that the optimal cost of the BCP is a lower bound for the asymptotic cost for any sequence of admissible
control policies.  Thus it suffices to show that the sequence of policies in Definition \ref{propolicy} (with a suitable choice
of threshold parameters) asymptotically achieves the optimal cost of the BCP. 
This is done in the theorem below.
\begin{theorem}\label{main2}
There exist $c, \bar{l} \in (1,\infty)$ such that for any $g_0\in(0,\infty)$ and $l_0\in [\bar l, \infty)$, the sequence of  control policies $\{T^n\}$  in Definition \ref{propolicy} with threshold parameters $c, l_0, g_0$, satisfies the following:

\textup{(i)} $(\hat{W}^n,\hat{I}^n)\Rightarrow ({W}^*, {I}^*) \,\,\,\mbox{as}\,\,\,n\rightarrow \infty,$

\textup{(ii)} $\hat{J}^n(T^n) \to J^*(0) \,\,\,\mbox{as}\,\,\,n\rightarrow \infty,$

\noindent where $(\hat{W}^n,\hat{I}^n)$ are defined as in Section \ref{sec:qnet} using the above sequence of control policies, and  $W^*, I^*$,  $J^*(0)$ are as in \eqref{workload-general-solu}, \eqref{optimal-idle}, and \eqref{optimal-limit-soln} respectively. 
\end{theorem}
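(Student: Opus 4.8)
\textbf{Proof plan for Theorem \ref{main2}.}
The plan is to deduce part (ii) from part (i) by a continuous-mapping and uniform-integrability argument, so the substance lies in establishing the weak convergence $(\hat W^n,\hat I^n)\Rightarrow(W^*,I^*)$ for an appropriate choice of $c$ and $\bar l$. First I would record the basic decomposition \eqref{scaled-workload}, writing $\hat W^n_1 = \hat X^n_1/\mu^n_1 + \hat X^n_2/\mu^n_2 + \hat I^n_1$ and $\hat W^n_2 = \hat X^n_2/\mu^n_3 + \hat X^n_3/\mu^n_3 + \hat I^n_2$, and define the ``driving'' processes $\hat B^n_1 \doteq \hat X^n_1/\mu^n_1 + \hat X^n_2/\mu^n_2$, $\hat B^n_2 \doteq \hat X^n_2/\mu^n_3 + \hat X^n_3/\mu^n_3$. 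By the functional CLT for renewal processes (and Assumptions \ref{rates-c}, \ref{htc}) together with the fact that under our policy $\bar T^n \Rightarrow \bar T^*$, one gets $\hat B^n \Rightarrow B$ where $B=(B_1,B_2)'$ is as in \eqref{eq:eq911}. The goal is then to show that the idleness processes $\hat I^n$ are asymptotically the Skorohod-type regulators forced by the free boundary, i.e. that asymptotically
\begin{equation*}
\hat W^n_2(t) \approx \Gamma(\hat B^n_2)(t), \qquad \hat W^n_1(t) \approx \Gamma\bigl(\hat B^n_1 - \Psi(\hat W^n_2)\bigr)(t) + \Psi(\hat W^n_2(t)),
\end{equation*}
matching \eqref{eq:eq1100}.

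The key steps, in order, are as follows. (1) Use Theorem \ref{impthm1} (the analogue of Theorem~4.8 of \cite{AA}) to control the ``fast'' dynamics at Server~1: show that, with $c = 1 + 4/\theta_4$, the quantity $Q^n_3 - (\mu^n_2/\mu^n_1)Q^n_1$ together with the safety-stock mechanism keeps, with overwhelming probability on compact time intervals, either $\hat Q^n_3$ or $\hat Q^n_1$ within $O(\log n/\sqrt n) = o(1)$ of zero according to the sign of $\mu^n_3\hat W^n_2 - \mu^n_2\hat W^n_1$; this is where the large-deviation estimates for renewal processes (Lemma \ref{LDP-renewal}) enter, and it gives the convergence recorded in Corollary \ref{impcor}. (2) Use Lemma \ref{lower-bound-est} to show $\hat W^n(t)\in G$ up to $o(1)$ errors, i.e. $\hat W^n_1(t) \ge \Psi(\hat W^n_2(t)) - o(1)$ uniformly on compacts; this is precisely where the last (idling) line of Definition \ref{propolicy} is used — whenever $W^n_1 - \sqrt n\,\Psi(W^n_2/\sqrt n) < g_0$ and $Q^n_1$ is small, Server~1 idles, pushing $\hat I^n_1$ up and preventing $\hat W^n_1$ from dropping below the free boundary. (3) Use Theorem \ref{impthm2} (the analogue of Theorem~4.9 of \cite{AA}), again via fixed-time renewal large deviations, to show the complementary bound: $\hat I^n_1$ increases only when $\hat W^n_1$ is within $o(1)$ of the free boundary $\Psi(\hat W^n_2)$, and $\hat I^n_2$ increases only when $\hat W^n_2$ is within $o(1)$ of $0$ — i.e. the ``minimality'' half of the Skorohod characterization. (4) Combine (2) and (3): $\hat W^n_2$ satisfies, up to asymptotically negligible error terms, the one-dimensional Skorohod problem driven by $\hat B^n_2$, and $\hat W^n_1 - \Psi(\hat W^n_2)$ satisfies the one-dimensional Skorohod problem driven by $\hat B^n_1 - \Psi(\hat W^n_2)$; since $\Psi$ is Lipschitz (Lemma \ref{imp-fcn}(ii)) the map $\hat B^n \mapsto (\hat W^n_1,\hat W^n_2)$ defined by \eqref{eq:eq1100} is continuous on the Skorohod space, and the Lipschitz/contraction properties of the one-dimensional Skorohod map collected in the appendix give well-posedness and continuity. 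Apply the continuous mapping theorem with $\hat B^n \Rightarrow B$ to conclude $(\hat W^n, \hat I^n) \Rightarrow (W^*, I^*)$, proving (i).

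For part (ii), I would first upgrade (i) to $C$-tightness of $(\hat W^n, \hat I^n, \hat Q^n)$ and show $\hat Q^n \Rightarrow Q^*$ using \eqref{scaled4}, Corollary \ref{impcor}, and the identities \eqref{eq:eq1237a}--\eqref{eq:eq1237b} (so that $c\cdot\hat Q^n(t) \Rightarrow c\cdot Q^*(t) = \hat h(W^*(t))$ for each fixed $t$). Then one needs $\int_0^\infty e^{-\gamma t}\,\mathbb E[c\cdot\hat Q^n(t)]\,dt \to \int_0^\infty e^{-\gamma t}\,\mathbb E[\hat h(W^*(t))]\,dt = J^*(0)$, which follows from convergence of the integrands at each $t$ (by continuous mapping, since $\hat h$ is continuous and linearly bounded) plus a uniform integrability / dominated-convergence bound: using the Lipschitz-$1$ bound \eqref{T-lip} on $T^n$ one gets $\mathbb E\|\hat Q^n(t)\|$ growing at most linearly in $t$, uniformly in $n$ (standard renewal moment bounds under Assumption \ref{ass:ass3}), so $e^{-\gamma t}$ provides the needed domination. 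Combined with the lower bound $\liminf_n \hat J^n(T^n)\ge J^*(0)$ from Theorem \ref{main1}, this yields $\hat J^n(T^n)\to J^*(0)$.

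I expect the main obstacle to be Step (3), i.e. Theorem \ref{impthm2}: showing that $\hat I^n$ does not increase when $\hat W^n$ is in the interior of $G$. This is the place where one must quantify, uniformly over compact time intervals, how quickly the fast Server-1 sequencing (and the threshold/idling rules) drive the relevant queue lengths back toward the state suggested by the BCP, and the argument requires delicate fixed-time large-deviation estimates for renewal counting processes (as opposed to the cruder sample-path estimates for Poisson processes available in \cite{AA}); controlling the error terms arising from the interaction between the $\Psi$-dependent idling rule and the excursions of $\hat B^n_1$ is the technically heaviest part. The choice of $c$ and $\bar l$ large enough (as in the Remark following Theorem \ref{main-result}) is exactly what makes the relevant excursion lengths short enough for these estimates to close.
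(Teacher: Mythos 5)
Your plan follows the paper's route quite closely: set up the Skorohod-map representations \eqref{exp1} and \eqref{exp3} for $\hat W^n_2$ and $\hat W^n_1$, send the driving processes $\hat B^n_i$ to $B_i$ by the renewal FCLT together with $\bar T^n\Rightarrow\bar T^*$ (Lemma \ref{implem}), invoke Theorem \ref{impthm1}, Theorem \ref{impthm2} and Lemma \ref{lower-bound-est} to kill the error terms sitting inside the reflection map, then close part (ii) by uniform integrability and the continuous mapping theorem. Two points are worth correcting.

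First, your Step (3) misattributes the control of $\hat I^n_1$ to Theorem \ref{impthm2}. That theorem concerns $\hat I^n_2$ only: it shows $\int 1_{\{\hat Q^n_2(s)\ge dl_0\log n/\sqrt n\}}\,d\hat I^n_2(s)\Rightarrow 0$, which is the one genuinely delicate large-deviation estimate. The statement that $\hat I^n_1$ grows only near the free boundary is not an estimate but an \emph{exact} consequence of Definition \ref{propolicy}: Server $1$ can idle only on $\{Q^n_1=Q^n_2=0\}$ or on $\{W^n_1-\sqrt n\,\Psi(W^n_2/\sqrt n)<g_0\}$, and in either case (using $\Psi\ge 0$, Lemma \ref{imp-fcn}(i)) the event $\bfC(n,t)$ holds so the quantity $(\hat W^n_1-\Psi(\hat W^n_2))1_{\bfC(n,\cdot)^c}$ vanishes. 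That is precisely what makes \eqref{exp3} an exact Skorohod representation; the only remaining error term $(\hat W^n_1-\Psi(\hat W^n_2))1_{\bfC(n,\cdot)}$ is then controlled by Lemma \ref{lower-bound-est} together with $\bar Q^n\Rightarrow 0$.

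Second, your uniform-integrability step for part (ii) does not work as stated. The Lipschitz-$1$ bound on $T^n$ only gives $\sqrt n\,|\lambda^n_i t-\mu^n_i\bar T^n_i(t)|\le \sqrt n\,\max(\lambda^n_i,\mu^n_i)\,t$, which is $O(\sqrt n\,t)$, not $O(t)$; there is no \emph{a priori} bound on $\mathbb E\|\hat Q^n(t)\|$ uniform in $n$ for arbitrary admissible controls. The bound is a property of this particular policy, and must be extracted from the Skorohod representations \eqref{exp1}, \eqref{exp3} and \eqref{estimate}: the reflection map is $2$-Lipschitz, so $\sup_{s\le t}\hat W^n_i(s)$ is controlled by the sup of the driving process, whose second moment is bounded uniformly in $n$ via Lorden's inequality (Doob's inequality is unavailable since the renewal processes here are not Poisson martingales). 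This is Theorem \ref{impthm3}. You also still need a device like Lemma \ref{techlem} to pass from the convergence of $\int e^{-\gamma t}\mathbb E[\hat W^n_i(t)]\,dt$ to the convergence of $\int e^{-\gamma t}\mathbb E[c\cdot\hat Q^n(t)]\,dt$, because the LP decomposition \eqref{LP-soln} relating $\hat Q^n$ to $\hat W^n$ involves the indicators $1_{\bfA(n,nt)}$, $1_{\bfA(n,nt)^c}$ whose limits are discontinuous on the free boundary.
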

Proof of Theorem \ref{main-result} is immediate from Theorems \ref{main1} and \ref{main2}:
\begin{proof}[Proof of Theorem \ref{main-result}]
	From Theorem \ref{main1}
	$$\inf \liminf_{n\to \infty} \hat J^n(\tilde T^n) \ge J^*(0)$$ 
	where the infimum is taken over all admissible control sequences $\{\tilde T^n\}$.
	Also, with $\{T^n\}$ as in Theorem \ref{main2}, 
	$$\inf \liminf_{n\to \infty} \hat J^n(\tilde T^n) \le \liminf_{n\to \infty} \hat J^n( T^n) = J^*(0).$$
	Combining the above two inequalities, we have $\inf \liminf_{n\to \infty} \hat J^n(\tilde T^n) = J^*(0) = \lim_{n\to \infty} \hat J^n(T^n)$.
\end{proof}
Rest of this section is devoted to the  proof of Theorem \ref{main2}.  The proof  relies on three technical results: Theorem \ref{impthm1}, Theorem \ref{impthm2}
and Lemma \ref{lower-bound-est}, the proofs of which are postponed to Section \ref{impthms}. Throughout this section $\{T^n\}$ will denote 
the sequence of  control policies   in Definition \ref{propolicy} with some choice of threshold parameters.
  We begin with the following lemma
from  \cite{AA}. 
\begin{lemma}[Lemma 4.7 of \cite{AA}]\label{techlem}
Let $\{f_n\}$ and $\{g_n\}$ be sequences of functions in $D([0,\infty):\RR)$, and let $f$ and $g$ be continuous functions from $[0,\infty)$ to $\mathbb{R}$, such that $f_n\rightarrow f$, $g_n\rightarrow g$ in $D([0,\infty):\RR)$ as $n\rightarrow \infty$. Suppose that 
$\int_{[0,\infty)}e^{-\gamma t}1_{\{g(t)=0\}}dt=0\,.$
Let $\epsilon_n$ be a sequence of non-negative real numbers converging to $0$. Then for all $T > 0$, the following hold:
$$\int_0^Te^{-\gamma t}f_n(t)1_{\{g_n(t)\geq \epsilon_n\}}dt\rightarrow \int_0^Te^{-\gamma t}f(t)1_{\{g(t)\geq 0\}}dt\,\, \ \mbox{as}\,\,n\rightarrow \infty\,,$$
$$\int_0^Te^{-\gamma t}f_n(t)1_{\{g_n(t)\leq \epsilon_n\}}dt\rightarrow \int_0^Te^{-\gamma t}f(t)1_{\{g(t)\leq 0\}}dt\,\, \ \mbox{as}\,\,n\rightarrow \infty\,.$$
\end{lemma}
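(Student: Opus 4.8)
The plan is to reduce both assertions to a single application of the dominated convergence theorem on $[0,T]$, the two ingredients being (a) local uniform convergence $f_n\to f$ and $g_n\to g$, which makes the integrands uniformly bounded and makes the indicator arguments converge genuinely pointwise, and (b) the fact that the hypothesis $\int_{[0,\infty)}e^{-\gamma t}1_{\{g(t)=0\}}\,dt=0$ forces the ``boundary set'' $N\doteq\{t\ge 0:g(t)=0\}$ to be Lebesgue-null.

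First I would record that, since the limits $f$ and $g$ are continuous, convergence in $D([0,\infty):\RR)$ with the Skorohod topology is equivalent to uniform convergence on every compact interval; in particular $M_T\doteq\sup_{n}\sup_{0\le t\le T}|f_n(t)|<\infty$, so on $[0,T]$ each of the integrands $e^{-\gamma t}f_n(t)1_{\{g_n(t)\ge \epsilon_n\}}$ and $e^{-\gamma t}f_n(t)1_{\{g_n(t)\le \epsilon_n\}}$ is dominated in absolute value by the integrable function $M_T 1_{[0,T]}$, and $\sup_{0\le t\le T}|g_n(t)-g(t)|\to 0$. Next, since $t\mapsto e^{-\gamma t}$ is strictly positive, the hypothesis $\int_{[0,\infty)}e^{-\gamma t}1_{\{g(t)=0\}}\,dt=0$ gives $\mathrm{Leb}(N)=0$, so $g(t)\neq 0$ for a.e. $t\in[0,T]$ and the contribution of $N$ to every integral in the statement is $0$.

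Then I would check pointwise convergence off $N$. If $g(t)>0$, then since $g_n(t)\to g(t)>0$ and $\epsilon_n\to 0$ we have $g_n(t)>\epsilon_n$ for all large $n$, whence $1_{\{g_n(t)\ge\epsilon_n\}}=1=1_{\{g(t)\ge 0\}}$ and $1_{\{g_n(t)\le\epsilon_n\}}=0=1_{\{g(t)\le 0\}}$ eventually; if $g(t)<0$, then since $g_n(t)\to g(t)<0\le\epsilon_n$ we have $g_n(t)<\epsilon_n$ for all large $n$, whence $1_{\{g_n(t)\ge\epsilon_n\}}=0=1_{\{g(t)\ge 0\}}$ and $1_{\{g_n(t)\le\epsilon_n\}}=1=1_{\{g(t)\le 0\}}$ eventually. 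Combining with $f_n(t)\to f(t)$ yields $e^{-\gamma t}f_n(t)1_{\{g_n(t)\ge\epsilon_n\}}\to e^{-\gamma t}f(t)1_{\{g(t)\ge 0\}}$ and $e^{-\gamma t}f_n(t)1_{\{g_n(t)\le\epsilon_n\}}\to e^{-\gamma t}f(t)1_{\{g(t)\le 0\}}$ for every $t\in[0,T]\setminus N$, i.e. for a.e. $t\in[0,T]$. Applying the dominated convergence theorem with dominating function $M_T 1_{[0,T]}$ passes both limits inside the integrals, giving the two claimed convergences.

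I do not expect a genuine obstacle here: the only point requiring care is the very first step, namely invoking that Skorohod convergence to a continuous limit upgrades to uniform convergence on compacts (needed both for the uniform bound $M_T$ and so that the indicator arguments $g_n(t)$ converge to $g(t)$ for each fixed $t$ rather than merely along time changes); everything else is a routine bounded-convergence argument, and the treatment of the second display is symmetric to that of the first.
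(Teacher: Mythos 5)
The paper does not prove this lemma; it is imported verbatim as Lemma 4.7 of \cite{AA} (Budhiraja--Ghosh 2005), so there is no ``paper's own proof'' here to compare against. Your argument is correct and is precisely the natural one. The two ingredients you isolate are the right ones: (a) Skorohod convergence to a \emph{continuous} limit upgrades to uniform convergence on compacts, which both gives the uniform bound $M_T$ and makes $g_n(t)\to g(t)$ for every fixed $t$ (not merely along time changes); and (b) strict positivity of $e^{-\gamma t}$ forces $\mathrm{Leb}(\{g=0\})=0$. Your pointwise analysis off the null set correctly uses both $\epsilon_n\to 0$ (for the case $g(t)>0$) and $\epsilon_n\ge 0$ (for the case $g(t)<0$), and the dominated convergence theorem on the finite-measure set $[0,T]$ with dominating constant $M_T$ finishes it. This is essentially the argument given in \cite{AA}; there is nothing to object to.
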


Given $c \in (1,\infty)$, let 
\begin{align}
\kappa(c) \doteq \max \biggl\{\frac{4\mu_1}{\mu_2},\frac{4}{c-1},\frac{c}{c-1}, 4\biggr\}.
\end{align}
For $n\in \N$, $\kappa \ge \kappa(c)$, and $t\geq 0$, define the events:
$$\bfA(n,t) \doteq \left\{{Q}^n_3(t)-\frac{\mu_2^n}{\mu_1^n}{Q}^n_1(t) < L^n\right\},$$ 
\begin{equation}
	\label{eq:eq816}\mathcal{E}_{\kappa}(n,t)\doteq \left\{\sup_{0\leq s \leq t}\hat{Q}^n_3(s)1_{\bfA(n,ns)}> \frac{\kappa(C^n-L^n+1)}{\sqrt{n}}\right\}
\cup \left\{\sup_{0\leq s \leq t}\hat{Q}^n_1(s)1_{\bfA(n,ns)^c}> \frac{\kappa(C^n-L^n+1)}{\sqrt{n}}\right\}.
\end{equation}

Proofs of the following two results are given in Section \ref{impthms}.
\begin{theorem}\label{impthm1}
There exist $\theta_i \in (0,\infty)$, $i=1,2,3,4$ and $n_0 \in \NN$ such that for the sequence $\{T^n\}$ with threshold parameters $l_0>1, c>1, g_0>0$ and with $\mathcal{E}_{\kappa}(n,t)$ defined as in \eqref{eq:eq816}
\begin{align}\label{prob1}
\PP(\cle_{\kappa}(n,t)) \le \theta_1 (nt+1)^2 e^{-\theta_2 nt} + \theta_3 (nt +1)^3 n^{-\theta_4 (c-1)l_0},
\end{align}
whenever $\kappa \ge \kappa(c)$, $n \ge n_0$ and $nt \ge 2$.
\end{theorem}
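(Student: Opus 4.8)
The plan is to bound separately the two events whose union is $\cle_\kappa(n,t)$ in \eqref{eq:eq816}. Write $M^n \doteq C^n-L^n+1$ (so $M^n/\log n \to (c-1)l_0$), let $E_3$ be the event $\{\sup_{s\le t}\hat Q^n_3(s)\,1_{\bfA(n,ns)} > \kappa M^n/\sqrt n\}$ and $E_1$ the event $\{\sup_{s\le t}\hat Q^n_1(s)\,1_{\bfA(n,ns)^c} > \kappa M^n/\sqrt n\}$. Both are handled by the same two steps. Step~1 is a pathwise analysis of the policy of Definition~\ref{propolicy}: an overshoot of $Q^n_3$ above $\kappa M^n$ during a visit to the regime $\bfA$ (respectively of $Q^n_1$ above $\kappa M^n$ during a visit to $\bfA^c$) can occur only along a time window of length at least of order $(c-1)l_0\log n$ over which at least one of the renewal processes $A^n_k,S^n_j$ deviates from its rate by a definite relative amount. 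Step~2 turns this into \eqref{prob1} via a union bound over the $O(nt)$ candidate windows, each charged a large deviation cost from Lemma~\ref{LDP-renewal}.

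For Step~1, I would first extract the deterministic skeleton of the policy. On $\bfA(n,\cdot)$, $Q^n_3$ increases only at Class~$2$ service completions and the policy forbids serving Class~$2$ once $Q^n_3\ge C^n-1$; on $\bfA(n,\cdot)^c$, $Q^n_1$ increases only at Class~$1$ arrivals and the policy drives Server~$1$ onto Class~$1$ once $Q^n_1\ge \frac{\mu^n_1}{\mu^n_2}(C^n-L^n+2)$; moreover the free-boundary idling clause only lowers $Q^n_3$ and raises $Q^n_1$, and only while $Q^n_1$ is below that same level. Tracking the queue $Q^n_1$ and the switching variable $V^n\doteq Q^n_3-\frac{\mu^n_2}{\mu^n_1}Q^n_1$ at the regime-change instants — using the heavy-traffic relations and rate positivity of Assumptions~\ref{htc} and~\ref{rates-c}, which in particular give $\lambda_1<\mu_1$, $\lambda_2<\mu_2$, $\mu_3<\mu_2$ — one obtains an a priori ceiling for the value of $Q^n_3$ on $\bfA$ and of $Q^n_1$ on $\bfA^c$ that holds unless a renewal process misbehaves; the definition of $\kappa(c)$ is arranged exactly so that $\kappa M^n$ exceeds this ceiling by at least $\delta_0(c-1)l_0\log n$ for a rate-dependent $\delta_0>0$. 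Consequently, if the overshoot occurs, there is an interval $[\sigma,\tau]$ with $\tau-\sigma\ge \delta_0'(c-1)l_0\log n$ over which one of the following holds: Server~$2$ serves Class~$3$ continuously while $Q^n_3$ drains by less than half its expected amount; Server~$1$ feeds Class~$3$ continuously while $Q^n_3$ climbs by more than twice its expected amount; a phase in which Server~$1$ is not serving Class~$1$ persists only because too few Class~$1$ customers arrive; or, for the $E_1$ analysis, Server~$1$ serves Class~$1$ continuously yet $Q^n_1$ fails to drain, or Class~$1$ arrivals are in excess. Each alternative is a definite relative deviation of one of $A^n_k,S^n_j$ over a window of length $\ge\delta_0'(c-1)l_0\log n$.

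For Step~2, I would restrict to the good event $\Omega_n\doteq\{A^n_k(nt)\le 2\lambda_k nt,\ S^n_j(nt)\le 2\mu_j nt,\ k=1,2,\ j=1,2,3\}$. The fixed-time renewal large deviation estimate of Lemma~\ref{LDP-renewal} yields $\PP(\Omega_n^c)\le\theta_1(nt+1)^2e^{-\theta_2 nt}$ for suitable $\theta_1,\theta_2$ and all $n\ge n_0$, which is the first term of \eqref{prob1}. On $\Omega_n$ the queue processes make at most $Cnt$ jumps by time $nt$, so the window of Step~1 has at most $Cnt$ possible left endpoints; fixing one, the deviation event of Step~1 says that a renewal process, started from one of its renewal epochs, deviates by a fixed relative factor over the next $\delta_0'(c-1)l_0\log n$ units of its own clock (and if the window were shorter it would instead have to accumulate too many renewals in too little time, again a large deviation). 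By Lemma~\ref{LDP-renewal}, together with Assumption~\ref{ass:ass3} which furnishes the finite logarithmic moment generating functions and a deviation rate $\theta>0$ uniform in $n$, each such event has probability at most a polynomial in $nt$ times $e^{-\theta(c-1)l_0\log n}=n^{-\theta(c-1)l_0}$. Summing over the $O(nt)$ endpoints and the finitely many renewal processes produces the second term $\theta_3(nt+1)^3n^{-\theta_4(c-1)l_0}$, with $\theta_4$ the minimum of the renewal-rate contributions (the explicit value is recorded in the Remark following Theorem~\ref{main-result}). Combining the two bounds and using $\kappa\ge\kappa(c)$ completes the proof.

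I expect Step~1, the pathwise analysis, to be the main obstacle: the two servers interact, the indicator $\bfA$ may flip many times within a single excursion of $Q^n_3$ (alternately enabling and disabling Server~$1$'s feeding of buffer~$3$), the free-boundary idling clause must be carried through the estimates, and one must show at once that an overshoot forces a definite renewal deviation \emph{and} that its window cannot be shorter than order $(c-1)l_0\log n$ — it is this window length that is responsible for the decisive factor $n^{-\theta_4(c-1)l_0}$. By contrast, once the pathwise statement is in place, Step~2 is a routine union bound over $O(nt)$ windows combined with the fixed-time renewal large deviation bound of Lemma~\ref{LDP-renewal}.
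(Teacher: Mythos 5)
Your Step~2 matches the paper's machinery exactly: both you and the authors restrict to a high-probability event that caps the number of renewal jumps at $O(nt)$ (with the complement charged $\theta_1(nt+1)^2e^{-\theta_2 nt}$), and then union-bound over $O(nt)$ excursion windows, each paying a cost $\lesssim n^{-\theta(c-1)l_0}$ by Lemma~\ref{LDP-renewal} applied over a window of length $\asymp (c-1)l_0\log n$. You have also identified the right high-level strategy in Step~1 — the window length forced by the threshold gap $C^n-L^n$ is what produces the $(c-1)l_0$ exponent — and you correctly single out Step~1 as the crux. But Step~1 is not resolved, and the specific decomposition you sketch is not the one that works.

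The paper's pivotal move, which is missing from your proposal, is to \emph{reduce the $Q^n_3$-overshoot event to a $Q^n_1$-overshoot event} rather than handling them separately. Concretely: if $Q^n_3(s)>\kappa(C^n-L^n+1)$ and $\bfA(n,s)$ holds, then by the definition of $\bfA$, $Q^n_1(s)>\frac{\mu^n_1}{\mu^n_2}\bigl(\kappa(C^n-L^n+1)-L^n\bigr)>\kappa'(C^n-L^n+1)$ once $\kappa\ge\kappa(c)$. This collapses both halves of $\cle_\kappa(n,t)$ into a single claim about $Q^n_1$. That claim is then handled by a precise stopping-time hierarchy: the $\tau^n_k$ of \eqref{stopping-times} isolate the $\bfA^c$-designated intervals (on $\bfA$-intervals, $Q^n_3\le C^n\le\kappa(C^n-L^n+1)$ and $Q^n_1 1_{\bfA^c}=0$ deterministically), and within each, the $\eta^{n,k}_l$ of \eqref{substopping-times} isolate the maximal sub-intervals on which Server~1 serves Class~1 continuously with $Q^n_1$ starting near $\frac{\mu^n_1}{\mu^n_2}(C^n-L^n+2)$. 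On such a sub-interval $Q^n_1$ is dominated by a $GI/GI/1$ queue started just above the threshold with traffic intensity $<1$; the excursion above $\kappa'(C^n-L^n+1)$ before returning below a lower barrier requires an arrival/service deviation over a horizon $t^n\asymp(C^n-L^n)/\lambda_1$, and this is where the $n^{-\theta_4(c-1)l_0}$ cost enters. Your list of ``alternatives'' — in particular the ones invoking Server~2 or the Class~2/Class~3 service clocks — is not how the event splits, and you have not shown that those alternatives actually cover a $Q^n_3$-overshoot when $\bfA$ flips during the excursion, which is the very difficulty you flag. Without the $E_3\Rightarrow E_1$ reduction and the explicit $\tau/\eta$ decomposition, there is a genuine gap: the pathwise claim that an overshoot must manifest as a definite renewal deviation on a long window is asserted but not established. (Minor additional point: the paper's count is a \emph{double} sum over $\tau$-intervals and $\eta$-sub-intervals, giving $(nt+1)^2$ rather than $(nt+1)$ many candidate windows; your single-sum accounting would not reproduce the $(nt+1)^3$ polynomial.)
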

\begin{theorem}\label{impthm2}
 There exist $n_1 \in \NN$, $\epsilon \in (0,1)$, $\gamma_i =\gamma_i(c) \in (0,\infty)$, $i=1,2,3,4$, and $d = d(c) \in (0, \infty)$, such that for the sequence $\{T^n\}$ of control policies with threshold parameters $c>1$ and arbitrary $l_0 > 1$, $g_0 > 0$, 
	\begin{align}\label{prob2}
	\mathbb{P}&\biggl[\int_{[0,t)}1_{\bfB_d(n,s)}d\hat{I}^n_2(s)\neq 0\biggr]
	\le \gamma_1(nt+1)^2 e^{-\gamma_2 nt} + \gamma_3 (nt+1)^3 n^{-\gamma_4 l_0},
	\end{align}
	whenever $n\geq n_1$ and $nt\ge 2/\epsilon$,
	where 
$$ \bfB_d(n,t) \doteq \left\{\hat{Q}^n_2(t)\geq \frac{dl_0\log n}{\sqrt{n}}\right\}.$$
\end{theorem}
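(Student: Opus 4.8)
The plan is to show that, with high probability, the idle time process $\hat I^n_2$ can only increase at times $s$ when $\hat Q^n_2(s)$ is small, i.e. outside $\bfB_d(n,s)$. The starting point is the structure of the policy: Server $2$ is non-idling, so $\hat I^n_2$ increases only when $Q^n_3=0$. Thus the event in \eqref{prob2} forces the existence of a time $s<t$ with $Q^n_3(ns)=0$ and simultaneously $\hat Q^n_2(s)\ge dl_0\log n/\sqrt n$, i.e. $Q^n_2(ns)\ge d l_0 \log n$. First I would reduce the problem to controlling, for all $s$ in a suitable grid of $[0,t)$, the probability that Buffer $3$ empties while Buffer $2$ holds at least $\sim dl_0\log n$ jobs; a continuity/Lipschitz argument (the queue-length processes change by at most $O(1)$ between successive jump times, and by bounded increments over short intervals via the renewal large deviation bound of Lemma \ref{LDP-renewal}) lets one pass from the grid to all $s$, at the cost of the polynomial-in-$nt$ prefactors appearing in \eqref{prob2}.

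The heart of the argument is a ``bad-to-worse'' excursion analysis. Whenever $Q^n_3$ hits $0$, consider the last time $\tau$ before $s$ at which $Q^n_2$ first reached the level $\sim dl_0\log n$ (or time $0$). On $[\tau,s]$ one of two things is happening under the policy in Definition \ref{propolicy}: either we are in the regime $\bfA(n,\cdot)^c$, where Server $1$ is essentially giving priority to Class $1$ and feeding Class $2$ sparingly — but then $Q^n_2$ large together with $Q^n_3$ small is hard to reconcile with the safety-stock mechanism, which keeps $Q^n_3$ near $C^n$ whenever Class $2$ is being served; or we are in regime $\bfA(n,\cdot)$ near the free-boundary/idling clause, where again Server $1$ either serves Class $2$ (increasing $Q^n_3$, so $Q^n_3$ cannot stay at $0$ for long once Class $2$ is served) or idles. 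Either way, to have $Q^n_2$ build up to order $\log n$ while $Q^n_3$ drains to $0$, Server $2$ must out-process its Class $3$ input over a time window during which Server $1$ fails to transfer enough work, and this requires an atypical fluctuation of the renewal counting processes $A^n_2, S^n_2, S^n_3$ over an interval of length at least of order $\log n / \sqrt n$ in diffusion time (order $\log n\cdot\sqrt n$ in real time, or for the relevant buildup, a window where the net drift points the wrong way over a span proportional to $dl_0\log n$). Quantifying this as a large-deviation event and applying Lemma \ref{LDP-renewal} with the rate functions $\Lambda^*_{s,i},\Lambda^*_{a,2}$ yields a bound of the form $n^{-\gamma_4 l_0}$ for an explicit $\gamma_4=\gamma_4(c)>0$, while fluctuations on the macroscopic time scale $[0,t)$ contribute the $e^{-\gamma_2 nt}$ term. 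The constants $d,\epsilon_1,\theta,K$ are chosen (as displayed in the remark following Theorem \ref{main-result}) precisely so that the logarithmic level $dl_0\log n$ sits above the typical order of $Q^n_2$ dictated by $C^n$ but the resulting deviation exponent still dominates the polynomial prefactors.

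A convenient way to organize the window analysis is to combine the one-dimensional Skorohod-map bounds from the appendix with the representation \eqref{queue-length}: on an excursion where $Q^n_3$ stays small, $S^n_3(T^n_3(\cdot))$ tracks $S^n_2(T^n_2(\cdot))$ closely, so the growth of $Q^n_2=A^n_2-S^n_2(T^n_2)$ is controlled by comparing the Class $2$ arrival stream to the Class $2$ service actually delivered by Server $1$; the policy guarantees that in the relevant regime $T^n_2$ advances at nearly full rate whenever $Q^n_2>0$ and $Q^n_3<C^n-1$, so a persistent buildup of $Q^n_2$ to order $\log n$ is only possible if $A^n_2$ runs ahead of rate-$\lambda^n_2$ time (or $S^n_2$ lags) by a margin summing to $\gtrsim dl_0\log n$ — exactly the fixed-time renewal deviation Lemma \ref{LDP-renewal} is built to estimate. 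The main obstacle, and where the bookkeeping is most delicate, is handling the switching between the regimes $\bfA(n,\cdot)$ and $\bfA(n,\cdot)^c$ and the idling clause during a single excursion: one must rule out pathological alternation that individually looks typical but accumulates a large buildup, which I would do by a stopping-time decomposition counting the number of regime switches and using that each ``bad'' sub-excursion of length $\ell$ costs probability $e^{-c'\ell}$, then summing a geometric-type series. I expect this combinatorial control of excursion structure, rather than any single large-deviation estimate, to be the technically hardest part — it is the analogue of, and the reason for the greater difficulty than, the Poisson sample-path argument used for Theorem 4.9 in \cite{AA}.
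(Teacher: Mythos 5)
Your high-level intuition is right: $\hat I^n_2$ increases only when $Q^n_3=0$, so the event requires $Q^n_2$ to be large while $Q^n_3$ has drained to zero, and the policy's safety-stock mechanism plus the drift $\mu_2>\mu_3$ make this a large-deviation event on a window of length $O(\log n)$. But the proposal has several gaps that would derail an actual write-up, and the part you flag as "technically hardest" is in fact one of the simpler pieces.

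\textbf{Time scale.} You first claim the atypical fluctuation must occur "over an interval of length at least of order $\log n/\sqrt n$ in diffusion time (order $\log n\cdot\sqrt n$ in real time)." This is off by a factor of roughly $\sqrt n$: the relevant windows in the paper's proof have length $\theta D^n/K$ and $D^n/K$ in \emph{real} (unscaled) time, i.e.\ $O(\log n)$, and this is exactly what makes the arithmetic work: a fixed-time renewal LD estimate on a window of length $\ell$ has probability $e^{-c\ell}$, so one needs $\ell\sim l_0\log n$ to produce $n^{-\gamma_4 l_0}$. You hedge toward the right answer at the end of that sentence, but the initial mis-scaling suggests confusion that would propagate.

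\textbf{Decomposition and combinatorics.} You propose a backward excursion decomposition (look at the last time before $s$ when $Q^n_2$ first reached $\sim dl_0\log n$), plus a stopping-time count of regime switches summed as a geometric-type series. The paper instead uses a forward, nested hierarchy of $\{\clg^n_1(t)\}$-stopping times $\tau^n_k\supset\tilde\eta^{n,k}_m\supset\beta^{n,k}_m$ (see \eqref{stopping-times} and \eqref{substopping-timesb}), designed so that (a) the interval $[\tau^n_{2k-1},\tau^n_{2k})$ never has $Q^n_3=0$ (so $I^n_2$ cannot increase there), (b) within $[\tau^n_{2k-2},\tau^n_{2k-1})$ the regime is fixed to $\bfA$ and Server~1's action is determined by $Q^n_3$ versus $C^n-1$ and $Q^n_2$ versus $0$ alone, and (c) the number of sub-intervals is bounded by counting arrival/service events, giving the polynomial prefactors $(nt+1)^2, (nt+1)^3$ directly. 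No geometric series is needed, and the "pathological regime alternation" you worry about is simply absent once one restricts to the regime-constant intervals. A backward-in-time decomposition is also technically awkward because the last pre-$s$ hitting time is not a stopping time, so one cannot directly apply the multi-parameter conditioning machinery.

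\textbf{The drift argument.} The key structural fact the proof leans on, which your sketch only gestures at, is that under the policy Server~1 serves Class~2 continuously on the relevant sub-intervals whenever $Q^n_2>0$ and $Q^n_3<C^n-1$, so that Buffer~3's net input rate is $\mu_2-\mu_3>0$; consequently $Q^n_3$ climbs from its starting level (at least $L^n-1$ in the $\clc^{n,k}_m$ case) to $C^n$ in real time $O(\log n)$ with high probability, ending the sub-excursion before $Q^n_3$ can empty. This positive-drift observation, quantified with carefully tuned constants $\epsilon_1, K, d, \theta$ in \eqref{imp-parameters}, is the actual engine of the proof; without it, "Server~1 fails to transfer enough work" has no teeth.

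\textbf{Conditioning at random times.} Your sketch applies Lemma \ref{LDP-renewal} starting from random stopping times but does not mention the mechanism that makes this legitimate: the multi-parameter stopping time construction (Lemma \ref{multi-1}) and the regeneration property (Lemma \ref{multi-2}) that together let one condition at $\beta^{n,k}_m$ and restart the renewal estimates. This is not a cosmetic detail: it is why $\beta^{n,k}_m$ is defined the way it is, and why the forward decomposition is preferred.

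In short: right direction, wrong estimate of where the difficulty lies, and several quantitative/structural ingredients missing that would have to be supplied before the sketch becomes a proof.
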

An immediate consequence of the above two theorems is the following.
\begin{corollary}\label{impcor}
Let $\theta_4$ be as in Theorem \ref{impthm1}. Let $c= 1+ \frac{4}{\theta_4}$, and let $\gamma_i \in (0,\infty)$,
	$i=1,2,3,4$ and $d$ be as in Theorem \ref{impthm2}.  Choose $\bar l \in (1,\infty)$ to be large enough so that $\gamma_4\bar{l}>3$. Fix $t\ge 0$. Then for all $l_0 \ge \bar l$, $g_0 >0$
	and sequence $\{T^n\}$ of control policies with threshold parameters $c, l_0, g_0$
	the probabilities in \eqref{prob1} and \eqref{prob2} tend to $0$ as $n\rightarrow \infty$ for all $\kappa \ge \kappa(c)$. In particular, as $n\rightarrow \infty$,
\begin{align*}
\hat{Q}^n_1(\cdot)1_{\bfA(n,n\cdot)^c}\Rightarrow 0,\; \ \
\hat{Q}^n_3(\cdot)1_{\bfA(n,n\cdot)}\Rightarrow 0,\; \  \
\int_{[0,\cdot]}1_{\bfB_d(n,s)}d\hat{I}^n_2(s)\Rightarrow 0.
\end{align*}
\end{corollary}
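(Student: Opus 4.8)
The plan is to derive Corollary~\ref{impcor} as a direct combination of Theorems~\ref{impthm1} and~\ref{impthm2} together with the observation that uniform-in-compact-intervals convergence in probability to $0$ is equivalent to the stated weak convergences, since the limits are deterministic. First I would fix the threshold parameters exactly as prescribed: take $\theta_4$ from Theorem~\ref{impthm1}, set $c = 1 + 4/\theta_4$ (so that $\theta_4(c-1) = 4$), take $\gamma_i, d$ from Theorem~\ref{impthm2}, and pick $\bar l$ with $\gamma_4 \bar l > 3$. With these choices, for any $l_0 \ge \bar l$ and any fixed $t \ge 0$, I examine the right-hand sides of \eqref{prob1} and \eqref{prob2}. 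The first terms $\theta_1(nt+1)^2 e^{-\theta_2 nt}$ and $\gamma_1(nt+1)^2 e^{-\gamma_2 nt}$ tend to $0$ as $n\to\infty$ because the exponential decay in $n$ dominates the polynomial growth. For the second terms, $\theta_3(nt+1)^3 n^{-\theta_4(c-1)l_0} = \theta_3(nt+1)^3 n^{-4 l_0}$ and $\gamma_3(nt+1)^3 n^{-\gamma_4 l_0}$: since $(nt+1)^3 \sim (t)^3 n^3$ for fixed $t>0$, these behave like constants times $n^{3 - 4l_0}$ and $n^{3 - \gamma_4 l_0}$ respectively, and both exponents are negative because $l_0 \ge \bar l > 1$ forces $4l_0 > 3$ and $\gamma_4 l_0 \ge \gamma_4 \bar l > 3$. (The case $t = 0$ is trivial since all the processes involved vanish at time $0$.) Hence both probabilities tend to $0$ as $n\to\infty$, for every $\kappa \ge \kappa(c)$ and every fixed $t$.

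Next I translate these probability bounds into the weak-convergence statements. Since $\{\cle_\kappa(n,t)\}$ is, by its definition in \eqref{eq:eq816}, the event that either $\sup_{0\le s\le t}\hat Q^n_3(s)1_{\bfA(n,ns)}$ or $\sup_{0\le s\le t}\hat Q^n_1(s)1_{\bfA(n,ns)^c}$ exceeds $\kappa(C^n-L^n+1)/\sqrt n$, and since $C^n - L^n + 1 = \lfloor c_0\log n\rfloor - \lfloor l_0\log n\rfloor + 1 = O(\log n)$ so that $(C^n-L^n+1)/\sqrt n \to 0$, the vanishing of $\PP(\cle_\kappa(n,t))$ for every fixed $t$ gives that for every $\eta > 0$ and every $t$, $\PP(\sup_{0\le s\le t}\hat Q^n_3(s)1_{\bfA(n,ns)} > \eta) \to 0$ and similarly for the $\hat Q^n_1$ term: indeed, for large enough $n$ one has $\kappa(C^n-L^n+1)/\sqrt n < \eta$, so the event $\{\sup_{0\le s\le t}\hat Q^n_3(s)1_{\bfA(n,ns)} > \eta\}$ is contained in $\cle_\kappa(n,t)$. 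This is precisely uniform-on-compacts convergence in probability of $\hat Q^n_3(\cdot)1_{\bfA(n,n\cdot)}$ and $\hat Q^n_1(\cdot)1_{\bfA(n,n\cdot)^c}$ to the zero process, which since these processes are nonnegative (by admissibility, condition (iii) in Section~\ref{sec:schcont}) yields $\hat Q^n_3(\cdot)1_{\bfA(n,n\cdot)} \Rightarrow 0$ and $\hat Q^n_1(\cdot)1_{\bfA(n,n\cdot)^c} \Rightarrow 0$ in $D([0,\infty):\RR)$.

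For the third statement I use Theorem~\ref{impthm2}. The process $t \mapsto \int_{[0,t]}1_{\bfB_d(n,s)}d\hat I^n_2(s)$ is nondecreasing and nonnegative, so $\sup_{0\le s\le t}\int_{[0,s]}1_{\bfB_d(n,r)}d\hat I^n_2(r) = \int_{[0,t]}1_{\bfB_d(n,s)}d\hat I^n_2(s)$, and the event that this quantity is nonzero is exactly the event in \eqref{prob2}; since $\PP$ of that event $\to 0$ for every fixed $t$, we get $\int_{[0,\cdot]}1_{\bfB_d(n,s)}d\hat I^n_2(s) \to 0$ uniformly on compacts in probability, hence $\Rightarrow 0$ in $D([0,\infty):\RR)$. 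I would remark that convergence in probability, uniformly on compact time intervals, to a continuous deterministic limit implies convergence in distribution in the Skorohod topology (this is a standard fact; see e.g. the characterization of $J_1$-convergence to continuous limits).

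I do not expect any genuine obstacle here: the corollary is designed to be an immediate packaging of the two hard theorems, whose proofs are deferred to Section~\ref{impthms}. The only points requiring a little care are (a) checking that the particular choices $c = 1 + 4/\theta_4$ and $\gamma_4\bar l > 3$ indeed make every exponent in \eqref{prob1}--\eqref{prob2} negative --- the reason $\bar l > 1$ alone is not quite enough for the $n^{3-4l_0}$ term is that one needs $4l_0 > 3$, which holds since $l_0 \ge \bar l > 1$; and (b) being explicit that the bound must hold for \emph{every fixed} $t$, since the right-hand sides of \eqref{prob1}--\eqref{prob2} blow up as $t\to\infty$ for fixed $n$ --- but this is fine because Skorohod convergence on $[0,\infty)$ is determined by convergence on each $[0,t]$. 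I would keep the write-up short, essentially: fix parameters, read off that each of the four error terms $\to 0$, invoke nonnegativity/monotonicity to pass from the exceedance probabilities to uniform-on-compacts convergence in probability, and then to weak convergence.
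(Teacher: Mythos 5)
Your proposal is correct and follows exactly the argument the paper intends: the paper labels the corollary ``an immediate consequence'' of Theorems~\ref{impthm1} and~\ref{impthm2} and offers no written proof, and your write-up supplies precisely the omitted packaging — verifying that the chosen $c$ and $\bar l$ make every exponent in \eqref{prob1}--\eqref{prob2} negative, noting $(C^n-L^n+1)/\sqrt n \to 0$ so the exceedance events are eventually contained in $\cle_\kappa(n,t)$, using monotonicity of the idleness integral, and passing from uniform-on-compacts convergence in probability to a continuous deterministic limit to Skorohod weak convergence. No gaps.
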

For the rest of this section we fix threshold parameters $c, l_0, g_0$ and constants $d$, $\kappa$ as in Corollary \ref{impcor}.
We will suppress $\kappa$ and $d$ in the notation for $\bfB_d(n,s), \cle_{\kappa}(n,s)$.
We next provide a lower bound for $W^n_1(t) - \sqrt{n}\Psi(W^n_2(t)/\sqrt{n}), t\ge 0$, which will ensure that $\hat W^n(t) \in G$ asymptotically. The proof is  given in Section \ref{impthms}.
\begin{lemma}\label{lower-bound-est}
There exist $C_1, C_2 \in (0,\infty)$ such that for $t\ge 0$, we have 
\begin{align}
& W^n_1(t) - \sqrt{n}\Psi(W^n_2(t)/\sqrt{n})  \nonumber\\
&\quad \ge  - \frac{1}{\mu^n_1}  \left|1- \frac{\mu_3\mu^n_2}{\mu^n_3\mu_2}\right| Q^n_1(t) -  \frac{1}{\mu^n_2}  \left|1- \frac{\mu_3\mu^n_2}{\mu^n_3\mu_2}\right| (2Q^n_2(t)+ 1) - C_1(C^n-L^n+2)+g_0 - C_2.
\label{eq:eq141}
\end{align}
\end{lemma}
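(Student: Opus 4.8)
The plan is to obtain the lower bound by a direct, if slightly tedious, algebraic decomposition of the quantity $W^n_1(t) - \sqrt{n}\Psi(W^n_2(t)/\sqrt{n})$, isolating the "idealized" part that would be nonnegative by construction from the error terms coming from the discrepancies between the prelimit service rates $\mu^n_j$ and their limits $\mu_j$, and from the discreteness of the thresholds.

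First I would rewrite everything in terms of the unscaled queue lengths. By definition, $W^n_1(t) = Q^n_1(t)/\mu^n_1 + Q^n_2(t)/\mu^n_2$ and $W^n_2(t) = Q^n_2(t)/\mu^n_3 + Q^n_3(t)/\mu^n_3$. The next step is to compare $\sqrt{n}\,\Psi(W^n_2(t)/\sqrt{n})$ with the "ideal" linear bound coming from Lemma \ref{imp-fcn}(i): since $0\le \Psi(w_2)\le \frac{\mu_3}{\mu_2}w_2$ and $\Psi$ is Lipschitz with constant $\frac{\mu_3}{\mu_2}$, one has $\sqrt{n}\,\Psi(W^n_2(t)/\sqrt{n}) \le \frac{\mu_3}{\mu_2}W^n_2(t) = \frac{\mu_3}{\mu_2}\big(\frac{Q^n_2(t)+Q^n_3(t)}{\mu^n_3}\big)$. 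Substituting, $W^n_1(t) - \sqrt{n}\Psi(W^n_2(t)/\sqrt{n}) \ge \frac{Q^n_1(t)}{\mu^n_1} + \frac{Q^n_2(t)}{\mu^n_2} - \frac{\mu_3}{\mu_2\mu^n_3}\big(Q^n_2(t)+Q^n_3(t)\big)$. Now I would add and subtract the term $\frac{\mu^n_2}{\mu^n_1\mu^n_3}\big(Q^n_3(t) - \frac{\mu^n_2}{\mu^n_1}Q^n_1(t)\big)\cdot(\text{appropriate factor})$ so as to produce a combination of the form $\big(1 - \frac{\mu_3\mu^n_2}{\mu^n_3\mu_2}\big)\cdot(\cdots)$ on the $Q^n_1$ and $Q^n_2$ coefficients — these are exactly the $\big|1 - \frac{\mu_3\mu^n_2}{\mu^n_3\mu_2}\big|$-weighted error terms appearing in \eqref{eq:eq141}, which vanish as $n\to\infty$ by Assumption \ref{rates-c}.

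The crucial step is then to control the remaining "ideal" combination using the structure of the control policy in Definition \ref{propolicy}. The key observation is that under the proposed policy, $Q^n_3(t)$ and the "signed difference" $Q^n_3(t) - \frac{\mu^n_2}{\mu^n_1}Q^n_1(t)$ are kept below levels of order $C^n$ and $L^n$ respectively whenever the server is not in the idling regime; and in the idling regime the last line of the policy is active precisely when $W^n_1(s) - \sqrt{n}\Psi(W^n_2(s)/\sqrt{n}) < g_0$, which prevents this quantity from overshooting downward by more than a controlled amount in one arrival/service step. More precisely, I would argue that the process $t\mapsto W^n_1(t) - \sqrt{n}\Psi(W^n_2(t)/\sqrt{n})$ can only decrease through a departure from Buffer 1 (Server 1 serving Class 1) or through an arrival to Buffer 3 via a Class 2 service completion at Server 1; in the first case the drop per event is $O(1/\mu^n_1)$ and such a departure is not triggered in the idling sub-regime when the quantity is below $g_0$, and Class 1 is served in the non-idling regimes only when $Q^n_1$ is large (of order $C^n - L^n$), giving the $-C_1(C^n - L^n + 2)$ term; in the second case, serving Class 2 increases $Q^n_3$ hence $W^n_2$, and since $\Psi$ is nondecreasing this can only decrease our quantity, but this happens only when the relevant threshold and positivity conditions on $Q^n_2, W^n_1 - \sqrt{n}\Psi$ hold, again yielding a bounded per-event change of $O(1/\mu^n_3)$ and a net bound involving $g_0$. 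Tracking these two mechanisms, together with the initial condition $W^n(0)=0$ and the bound $\Psi(0)=0$, produces the constant $-C_2$ and closes the estimate.

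The main obstacle I anticipate is the bookkeeping in the last paragraph: making rigorous the claim that $W^n_1(t) - \sqrt{n}\Psi(W^n_2(t)/\sqrt{n})$ cannot fall far below $g_0$ under the policy. Because $\Psi$ is only Lipschitz (not $C^1$, and with no lower bound on its slope), one cannot differentiate; instead one must do a careful jump-by-jump analysis of the three event types (Class 1 service, Class 2 service, Class 3 service at Server 2) and verify for each event type and each branch of the policy in Definition \ref{propolicy} either that the quantity does not decrease, or that it decreases by at most a controlled $O(1/\min_j \mu^n_j)$ amount and only when it is still above some threshold of order $g_0$ minus the accumulated error terms. The nonmonotonicity introduced by the coupling of $W^n_1$ and $W^n_2$ through $\Psi(W^n_2(\cdot))$ — a Class 2 service simultaneously decreases $Q^n_2$'s contribution to $W^n_1$ (good) but increases $W^n_2$ hence potentially $\Psi(W^n_2)$ (which subtracts, also effectively good since $\Psi$ is nondecreasing so this term only helps) — needs to be disentangled carefully so that one genuinely gets a lower bound rather than an inequality in the wrong direction; the signs all work out, but spelling out why requires the Lipschitz bound $\Psi(w_2)-\Psi(w_2')\le\frac{\mu_3}{\mu_2}(w_2-w_2')$ applied in the correct direction for each jump.
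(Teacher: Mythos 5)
Your opening move (the Lipschitz/linear bounds $0\le\Psi(w_2)\le\frac{\mu_3}{\mu_2}w_2$ from Lemma~\ref{imp-fcn}) and your identification of the three drivers of the estimate (the rate-discrepancy error terms, the threshold $C^n-L^n$, and the idling condition tied to $g_0$) are the right ingredients, and the first step matches the paper's treatment of the regime $Q^n_3(t)-\frac{\mu^n_2}{\mu^n_1}Q^n_1(t)<L^n$. But I think there is a genuine gap in the execution for the other regime.

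First, your direct chain $W^n_1-\sqrt n\,\Psi(W^n_2/\sqrt n)\ge \frac{Q^n_1}{\mu^n_1}+\frac{Q^n_2}{\mu^n_2}-\frac{\mu_3}{\mu_2\mu^n_3}(Q^n_2+Q^n_3)$ is only useful if you can then control $Q^n_3$ by $\frac{\mu^n_2}{\mu^n_1}Q^n_1 + O(L^n)$. That inequality holds precisely on the intervals $[\tau^n_{2k-2},\tau^n_{2k-1})$ where the policy keeps $Q^n_3-\frac{\mu^n_2}{\mu^n_1}Q^n_1<L^n$; it fails on $[\tau^n_{2k-1},\tau^n_{2k})$, where $Q^n_3$ can be much larger, and then the linear bound on $\Psi$ throws away far too much. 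You never make this case split explicit, but the paper's proof is structured entirely around it (the stopping times $\tau^n_k$ and, inside the second regime, $\eta^{n,k}_l$ and $\zeta^{n,k,l}_m$), and without it the first-paragraph estimate does not give a global bound.

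Second, and more seriously, your plan to close the second regime by a jump-by-jump argument -- ``the drop per event is $O(1/\mu^n_1)$ and such events occur only under specific conditions'' -- does not obviously produce the term $-C_1(C^n-L^n+2)$. The quantity $W^n_1(t)-\sqrt n\,\Psi(W^n_2(t)/\sqrt n)$ can fall by an amount of order $(C^n-L^n)/\mu^n_1$ over an interval of continuous Class~1 service: the number of jumps there is not $O(1)$, it is whatever is needed to drain $Q^n_1$ from roughly $\frac{\mu^n_1}{\mu^n_2}(C^n-L^n+2)$ down toward $0$. Bounding ``per-event drop times number of events'' would give $O(nt)$, not $O(C^n-L^n)$. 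What the paper actually does is an interval-by-interval chaining: on each $[\eta^{n,k}_{2l-1},\eta^{n,k}_{2l})$ it writes $\Delta^n(t)$ in terms of $\Delta^n(\eta^{n,k}_{2l-1})$ plus increments and uses $Q^n_1(t)-Q^n_1(\eta^{n,k}_{2l-1})\ge -Q^n_1(\eta^{n,k}_{2l-1})$, then bounds $Q^n_1(\eta^{n,k}_{2l-1})$ by the explicit threshold $\frac{\mu^n_1}{\mu^n_2}(C^n-L^n+2)+1$ coming from \eqref{no-effect}; that single substitution, not a cumulative per-jump count, is the source of $-C_1(C^n-L^n+2)$. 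Your outline does not contain this step, and I don't see how the jump-by-jump accounting recovers it.

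Two smaller inaccuracies: (i) a Class~2 service completion moves one job from Buffer~2 to Buffer~3 and hence leaves $W^n_2=(Q^n_2+Q^n_3)/\mu^n_3$ unchanged -- it is Class~2 \emph{arrivals} that raise $W^n_2$ and thus $\Psi(W^n_2)$, so the mechanism you name for the ``second case'' is the wrong one; (ii) the constant $C_2$ absorbs, among other things, the jump discontinuity of the target quantity at the stopping times (the $-C_0$ terms in the paper), not the initial condition $W^n(0)=0$. Neither of these is fatal, but they indicate the bookkeeping you flag as the main obstacle has not been resolved, and the overall strategy as stated would not deliver the claimed bound without reorganizing along the paper's interval decomposition.
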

The following asymptotic property of the sequence $\{T^n\}$ will play a key role. Recall $\bar T^*$ defined in \eqref{T-star}
and the fluid scaled processes $\bar T^n$ introduced in Section \ref{scaled}.
\begin{lemma}\label{implem}
As $n\to \infty$, 
$\bar{T}^n \Rightarrow \bar{T}^*$  and $\bar Q^n \Rightarrow 0$.
\end{lemma}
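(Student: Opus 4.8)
The plan is a standard fluid-limit argument for the controlled network, in which the only non-routine ingredients are the Lipschitz bound $\Psi(w_2)\le(\mu_3/\mu_2)w_2$ of Lemma~\ref{imp-fcn}(i) and the last conclusion of Corollary~\ref{impcor}. Write $\mathrm e(t)=t$. First I would establish $C$-tightness: by \eqref{T-lip} each $\bar T^n_j$ is Lipschitz with constant $1$ and vanishes at $0$, so $\{\bar T^n\}$ is $C$-tight, hence so are $\bar I^n_1=\mathrm e-\bar T^n_1-\bar T^n_2$ and $\bar I^n_2=\mathrm e-\bar T^n_3$, and, using the functional law of large numbers $\bar A^n\Rightarrow(\lambda_1\mathrm e,\lambda_2\mathrm e)'$, $\bar S^n\Rightarrow(\mu_1\mathrm e,\mu_2\mathrm e,\mu_3\mathrm e)'$ for the renewal processes, so are $\bar Q^n$ and $\bar W^n$ via \eqref{queue-length} and \eqref{workload}. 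Fixing an arbitrary subsequence, passing to a further subsequence, and using the Skorokhod representation, I would assume $(\bar T^n,\bar A^n,\bar S^n,\bar Q^n,\bar W^n,\bar I^n)\to(\bar T,\lambda\mathrm e,\mu\mathrm e,\bar Q,\bar W,\bar I)$ a.s., uniformly on compacts, with all limit coordinates Lipschitz. Letting $n\to\infty$ in \eqref{queue-length}, \eqref{workload} and in the definitions of $I^n_1,I^n_2$ then gives $\bar Q_i=\lambda_i\mathrm e-\mu_i\bar T_i$ ($i=1,2$), $\bar Q_3=\mu_2\bar T_2-\mu_3\bar T_3$, with $\bar Q_i\ge0$, each $\bar I_k$ nondecreasing and $\bar I_k(0)=0$; and multiplying \eqref{scaled-workload} by $n^{-1/2}$, together with $\sup_{s\le t}|\hat X^n(s)|=O_P(1)$ (functional CLT for renewal processes and $\bar T^n_i(s)\le s$), gives $\bar W^n_k-\bar I^n_k\Rightarrow0$, hence in the limit $\bar W_1=\bar I_1$ and $\bar W_2=\bar I_2$.

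Next I would handle Server~2. Since Server~2 works at unit rate whenever Buffer~3 is nonempty, the measure $d\bar I^n_2$ is carried by $\{s:\bar Q^n_3(s)=0\}$, so by a routine continuous-approximation-of-the-indicator argument and then $\epsilon\downarrow0$, in the subsequential limit $\int_{[0,\cdot]}\mathbf 1_{\{\bar Q_3(s)>0\}}\,d\bar I_2(s)=0$. By the last conclusion of Corollary~\ref{impcor}, $\int_{[0,\cdot]}\mathbf 1_{\bfB_d(n,s)}\,d\hat I^n_2(s)\Rightarrow0$, and since $\{\bar Q^n_2(s)\ge\epsilon\}\subset\bfB_d(n,s)$ for all large $n$, the same passage to the limit yields $\int_{[0,\cdot]}\mathbf 1_{\{\bar Q_2(s)>0\}}\,d\bar I_2(s)=0$. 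Thus $d\bar I_2$ is carried by $\{\bar Q_2=\bar Q_3=0\}=\{\bar W_2=0\}=\{\bar I_2=0\}$; since $\bar I_2$ is continuous, nondecreasing with $\bar I_2(0)=0$, this forces $\bar I_2\equiv0$, hence $\bar W_2\equiv0$, so that $\bar Q_2\equiv\bar Q_3\equiv0$, $\bar T_2=(\lambda_2/\mu_2)\mathrm e$ and $\bar T_3=\mathrm e$.

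Then I would handle Server~1, the delicate step. By Definition~\ref{propolicy}, Server~1 idles only when Buffers~1 and~2 are both empty (so $W^n_1=0$) or when $W^n_1(ns)<g_0+\sqrt n\,\Psi\bigl(W^n_2(ns)/\sqrt n\bigr)$; by Lemma~\ref{imp-fcn}(i) the right-hand side is at most $g_0+(\mu_3/\mu_2)W^n_2(ns)=g_0+(\mu_3/\mu_2)\,n\,\bar W^n_2(s)$, so at every idling time $s\le t$ we have $\bar W^n_1(s)\le\delta_n(t):=g_0/n+(\mu_3/\mu_2)\sup_{u\le t}\bar W^n_2(u)$. Since $\bar W^n_2\to0$ along this subsequence (by the previous step), $\delta_n(t)\to0$; because $\bar I^n_1$ increases only at idling times of Server~1, $\int_{[0,t]}\mathbf 1_{\{\bar W^n_1(s)>\delta_n(t)\}}\,d\bar I^n_1(s)=0$, which passes to the limit (exactly as for Server~2) to give $\int_{[0,t]}\mathbf 1_{\{\bar W_1(s)>0\}}\,d\bar I_1(s)=0$. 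Hence $d\bar I_1$ is carried by $\{\bar W_1=0\}=\{\bar I_1=0\}$, so $\bar I_1\equiv0$, whence $\bar W_1\equiv0$, $\bar Q_1\equiv0$, and $\bar T_1=\mathrm e-\bar T_2=(\lambda_1/\mu_1)\mathrm e$ by \eqref{htc-1}. Every subsequential limit of $(\bar T^n,\bar Q^n)$ therefore equals $(\bar T^*,0)$ with $\bar T^*$ as in \eqref{T-star}, which proves the claim.

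The main obstacle is precisely the Server~1 step: showing that the idling deliberately built into Definition~\ref{propolicy} is asymptotically negligible at the fluid scale. This works because the Lipschitz bound $\Psi(w_2)\le(\mu_3/\mu_2)w_2$ converts the free-boundary threshold condition $W^n_1-\sqrt n\,\Psi(W^n_2/\sqrt n)<g_0$ into $\bar W^n_1\le\delta_n(t)$ with $\delta_n(t)\to0$, but this bound is only informative once Server~2 has been analysed and $\bar W^n_2\Rightarrow0$ is known — so the order of the two steps is essential. (The first two conclusions of Corollary~\ref{impcor} are not needed for this lemma; they enter the subsequent proof of the cost convergence in Theorem~\ref{main2} through Lemma~\ref{techlem}.)
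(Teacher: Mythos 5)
Your proof is correct but takes a genuinely different route from the paper's. The paper's proof is direct and quantitative: starting from the decompositions \eqref{sko-1} and \eqref{sko-2}, it designs the residual terms so that the characterizing complementarity condition of the one-dimensional Skorohod map $\Gamma$ holds \emph{exactly} for each $n$, giving the representations \eqref{exp1} and \eqref{exp3}; the Lipschitz continuity of $\Gamma$ (and of $\Psi$) then produces the concrete bound \eqref{estimate} on $\sup_{s\le t}\bar W^n_i(s)$ as a sum of terms each tending to $0$ in probability, and $\bar Q^n\Rightarrow0$, $\bar T^n\Rightarrow\bar T^*$ follow from \eqref{scaled-workload}, \eqref{scaled1}, and the FCLT — no subsequences or Skorokhod representations are needed. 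You instead run the classical fluid-limit scheme ($C$-tightness, subsequential a.s.\ limits, identification through complementarity), never invoking $\Gamma$. The two complementarity facts you isolate — $d\bar I_2$ carried by $\{\bar Q_2=\bar Q_3=0\}$ (via Server~2's non-idling property together with the last conclusion of Corollary~\ref{impcor}), and $d\bar I_1$ carried by $\{\bar W_1=0\}$ (via Lemma~\ref{imp-fcn}(i) applied to the threshold test) — are exactly the structural ingredients the paper encodes inside its Skorohod decompositions, so the two proofs rest on the same observations about the policy; yours trades the algebraic decomposition for a softer passage to the limit of indicator integrals, while the paper's avoids subsequence extraction and yields the slightly stronger conclusion $\sup_{s\le t}\bar W^n_i(s)\to0$ in probability for free. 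You also correctly flag the essential ordering (Server~2 before Server~1). One small point: Definition~\ref{propolicy} in principle also lets Server~1 idle when $Q^n_3-\tfrac{\mu^n_2}{\mu^n_1}Q^n_1<L^n$, $Q^n_1=0$, $Q^n_2>0$, $Q^n_3\ge C^n-1$, which is not covered by the two idling scenarios you list; however, this case requires $C^n-1\le Q^n_3<L^n$, impossible once $n\ge\bar n$ (where $C^n-L^n-1\ge1$), so your reduction of the idling set is valid.
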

\begin{proof}  From the second expression in \eqref{scaled-workload}, we have
	 for $t\ge 0,$
\bes
\hatw^n_2(t) = \frac{\hatq^n_2(t)}{\mu^n_3} + \frac{\hatq^n_3(t)}{\mu^n_3} = \frac{\hatx^n_2(t)}{\mu^n_3} + \frac{\hatx^n_3(t)}{\mu^n_3} + \hati^n_2(t),
\ees
and thus 
{\small\be\label{sko-1}\ba
 \frac{\hatq^n_2(t)}{\mu^n_3} 1_{\bfB(n,t)} + \frac{\hatq^n_3(t)}{\mu^n_3} 
& = \frac{\hatx^n_2(t)}{\mu^n_3} + \frac{\hatx^n_3(t)}{\mu^n_3} - \frac{\hatq^n_2(t)}{\mu^n_3} 1_{\bfB(n,t)^c}  + \int_0^t 1_{\bfB(n,s)} d\hati^n_2(s)  +  \int_0^t 1_{\bfB(n,s)^c} d\hati^n_2(s). 
\ea\ee}
Note that the last term on the right hand side of \eqref{sko-1} is equal to $0$ when $t=0$, and is nondecreasing, and increases only when the term on the left hand side of \eqref{sko-1} is $0$. Therefore the left side of \eqref{sko-1} can be represented in terms of the one-dimensional Skorohod map $\Gamma$ (see  Proposition \ref{sp-property} (i)) and we have for $t\ge 0$, 
\be\label{exp1}
\hat{W}^n_2(t)=\Gamma\biggl(\frac{\hat{X}^n_2(\cdot)}{\mu^n_3}+\frac{\hat{X}^n_3(\cdot)}{\mu^n_3}-\frac{\hat{Q}^n_2(\cdot)}{\mu^n_3}1_{\bfB(n,\cdot)^c}+\int_0^\cdot 1_{\bfB(n,s)}d\hat{I}^n_2(s)\biggr)(t)
+\frac{\hat{Q}^n_2(t)}{\mu^n_3}1_{\bfB(n,t)^c}.
\ee
Thus using the Lipschitz continuity property of the Skorohod map (see  Proposition \ref{sp-property} (ii)), we have for $t\ge 0,$
\begin{align*}
 \sup_{0\le s \le t} \bar W^n_2(s) &
\le  \frac{1}{\sqrt{n}}\sup_{0\le s \le t} \Gamma\biggl(\frac{\hat{X}^n_2(\cdot)}{\mu^n_3}+\frac{\hat{X}^n_3(\cdot)}{\mu^n_3}-
\frac{\hat{Q}^n_2(\cdot)}{\mu^n_3}1_{\bfB(n,\cdot)^c}+\int_0^\cdot 1_{\bfB(n,s)}d\hat{I}^n_2(s)\biggr)(s)\\
& \quad + \frac{1}{\sqrt{n}} \sup_{0\le s \le t} \frac{\hat{Q}^n_2(s)}{\mu^n_3}1_{\bfB(n,s)^c} \\
& \le \frac{2}{\sqrt{n}} \sup_{0\le s \le t} \bigg|\frac{\hat{X}^n_2(s)}{\mu^n_3}+\frac{\hat{X}^n_3(s)}{\mu^n_3}-\frac{\hat{Q}^n_2(s)}{\mu^n_3}
1_{\bfB(n,s)^c}+\int_0^s 1_{\bfB(n,u)}d\hat{I}^n_2(u)\bigg| 
 + \frac{1}{\sqrt{n}} \cdot \frac{d l_0\log n}{\mu^n_3\sqrt{n}} \\
& \le  \frac{2}{\sqrt{n}} \sup_{0\le s \le t} \left|\frac{\hat{X}^n_2(s)}{\mu^n_3}+\frac{\hat{X}^n_3(s)}{\mu^n_3}\right| +   \frac{2d l_0\log n}{\mu^n_3{n}} + \frac{2}{\sqrt{n}} \int_0^t 1_{\bfB(n,u)}d\hat{I}^n_2(u) 
 +  \frac{d l_0\log n}{\mu^n_3 {n}}.
\end{align*}
From functional central limit theorem for renewal processes, $\hat A^n$ and $\hat S^n$ converge weakly to Brownian motions.
Combining this with the fact that $\bar T_i^n(t) \le t$ for $n \ge 1$ and $t \ge 0$, we have 
for all $t\ge 0,$
\be
\frac{2}{\sqrt{n}} \sup_{0\le s \le t} \left|\frac{\hat{X}^n_2(s)}{\mu^n_3}+\frac{\hat{X}^n_3(s)}{\mu^n_3}\right| \to 0, \ \mbox{in probability.} \label{eq:eq450}
\ee
Next from Corollary \ref{impcor} we have for $t\ge 0,$
$
\int_0^t 1_{\bfB(n,u)}d\hat{I}^n_2(u) \to 0$ in probability.
Finally, since
$
 \frac{d l_0\log n}{\mu^n_3{n}}  \to 0$ as $n\to \infty$, we have
\be\label{fluid-workload-1}
\sup_{0\le s \le t} \bar W^n_2(s) \to 0, \ \mbox{in probability.}
\ee
Next using the representation for $\hat{W}^n_1$ from \eqref{scaled-workload}, we have
\begin{align*}
&\hatw^n_1(t) -\Psi(\hat{W}^n_2(t))=\frac{\hat{X}^n_1(t)}{\mu^n_1}+\frac{\hat{X}^n_2(t)}{\mu^n_2}+\hat{I}^n_1(t)-\Psi(\hat{W}^n_2(t)),
\end{align*}
which implies 
\begin{align}
 (\hatw^n_1(t)-\Psi(\hat W^n_2(t)))1_{\bfC(n,t)^c } 
& = \frac{\hat{X}^n_1(t)}{\mu^n_1}+\frac{\hat{X}^n_2(t) }{\mu^n_2} - \Psi(\hatw^n_2(t)) \nonumber \\
& \quad -  (\hatw^n_1(t)-\Psi(\hat W^n_2(t)))1_{\bfC(n,t) } +\hat{I}^n_1(t),
\label{sko-2}\end{align}
where $\bfC(n,t) = \{\hatw^n_1(t)-\Psi(\hat W^n_2(t)) < g_0/\sqrt{n}\}$.
Note that the scheduling policy described in Definition \ref{propolicy} is such that $\hat{I}^n_1$ is equal $0$ when $t=0$, is non-decreasing, and increases only if the left hand side of the above equation is $0$. Thus using the characterizing property of the one dimensional Skorohod map we have for $t\ge 0,$
\be\label{exp3}\ba
\hatw^n_1(t) & = \Gamma\left( \frac{\hat{X}^n_1(\cdot)}{\mu^n_1}+\frac{\hat{X}^n_2(\cdot) }{\mu^n_2} - \Psi(\hatw^n_2(\cdot))  -  (\hatw^n_1(\cdot)-\Psi(\hat W^n_2(\cdot)))1_{\bfC(n,\cdot) }\right)(t) \\
& \quad + (\hatw^n_1(t)-\Psi(\hat W^n_2(t)))1_{\bfC(n,t)} + \Psi(\hatw^n_2(t)).
\ea\ee
By the Lipschitz property of $\Gamma$ and $\Psi$ we now have,
\begin{align}\label{estimate}\nonumber
 \sup_{0\le s\le t} \bar W^n_1(s)  & \le \frac{2}{\sqrt{n}}\sup_{0 \leq s \leq t}\left|\frac{\hat{X}^n_1(s)}{\mu^n_1}+\frac{\hat{X}^n_2(s)}{\mu^n_2}\right| + \frac{3}{\sqrt{n}}
\sup_{0 \leq s \leq t}\Psi(\hatw^n_2(s))\nonumber \\
& \quad+ \frac{3}{\sqrt{n}}\sup_{0 \leq s \leq t}\left|(\hatw^n_1(s)-\Psi(\hat W^n_2(s)))
1_{\bfC(n,s) }\right|\nonumber  \\
& \le \frac{2}{\sqrt{n}}\sup_{0 \leq s \leq t}\left|\frac{\hat{X}^n_1(s)}{\mu^n_1}+\frac{\hat{X}^n_2(s)}{\mu^n_2}\right| + \frac{6\mu_3}{\mu_2}\sup_{0 \leq s \leq t}\bar W^n_2(s) + \frac{3g_0}{n}.
\end{align}
As for \eqref{eq:eq450}, we have 
\bes
\frac{2}{\sqrt{n}}\sup_{0 \leq s \leq t}\left|\frac{\hat{X}^n_1(s)}{\mu^n_1}+\frac{\hat{X}^n_2(s)}{\mu^n_2}\right|\rightarrow 0\,\,\,\,\mbox{in probability}.
\ees
Using this along with \eqref{fluid-workload-1}, we now get
\be\label{fluid-workload-2}
\sup_{0\le s \le t} \bar{W}^n_1(s) \rightarrow 0 \ \ \mbox{as $n\to\infty$}.
\ee
From \eqref{fluid-workload-1},\eqref{fluid-workload-2} and \eqref{scaled-workload}, we have $\bar Q^n \Go 0$ as $n\to\infty$. 
Finally, using functional central limit theorem for renewal processes again, we have
$$n^{-1/2}\hat{A}^n_i(\cdot) \Rightarrow 0,\,\,\,\,n^{-1/2}\hat{S}^n_j(\bar{T}^n_j(\cdot))\Rightarrow 0, \,\,\,i=1,2\,\,j=1,2,3.$$ 
Hence, the conclusion follows from \eqref{scaled1} and the fact that $\bar Q^n \Rightarrow 0$.
\end{proof}

The following theorem gives certain uniform integrability properties that will be needed to  prove Theorem \ref{main2}.
\begin{theorem}\label{impthm3}
For $i=1,2$,
\begin{align}\label{uniint}
\limsup_{n \rightarrow \infty}\int_0^{\infty}e^{-\gamma t}\mathbb{E}\biggl[\sup_{0\leq s\leq t}\hat{W}^n_i(s)\biggr]^2dt < \infty\,,
\end{align}
and
\begin{align}\label{uniint1}
\limsup_{T\rightarrow \infty}\limsup_{n \rightarrow \infty}\int_T^{\infty}e^{-\gamma t}\mathbb{E}\biggl[\sup_{0\leq s\leq t}\hat{W}^n_i(s)\biggr]^2dt =0 \,.
\end{align}
\end{theorem}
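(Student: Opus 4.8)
\textbf{Proof proposal for Theorem \ref{impthm3}.}

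The plan is to derive a pathwise bound on $\sup_{0\le s\le t}\hat W^n_i(s)$ in terms of the running suprema of the driving processes $\hat X^n$, the safety-stock term $(C^n-L^n+1)/\sqrt n$, and the exceptional-event indicators appearing in Corollary \ref{impcor}, then square it, take expectations, and integrate against $e^{-\gamma t}\,dt$. The starting point for the $i=2$ component is the Skorohod representation \eqref{exp1} together with the Lipschitz property of $\Gamma$ (Proposition \ref{sp-property}(ii)), which already gives (as in the proof of Lemma \ref{implem}) the estimate
\begin{align*}
\sup_{0\le s\le t}\hat W^n_2(s)\le \frac{2}{\mu^n_3}\sup_{0\le s\le t}\bigl|\hat X^n_2(s)+\hat X^n_3(s)\bigr| + \frac{2d l_0\log n}{\mu^n_3\sqrt n} + 2\int_0^t 1_{\bfB(n,u)}\,d\hat I^n_2(u)\,.
\end{align*}
For $i=1$ one uses \eqref{exp3} and the Lipschitz property of $\Gamma$ and of $\Psi$ (Lemma \ref{imp-fcn}(ii)) to bound $\sup_{0\le s\le t}\hat W^n_1(s)$ by $C\sup_{0\le s\le t}|\hat X^n_1(s)+\hat X^n_2(s)| + C\sup_{0\le s\le t}\hat W^n_2(s) + Cg_0/\sqrt n$, exactly as in \eqref{estimate}. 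Thus it suffices to establish \eqref{uniint} and \eqref{uniint1} with $\sup_{0\le s\le t}\hat W^n_i(s)$ replaced by the three building blocks: $\sup_{0\le s\le t}|\hat X^n_j(s)|$, the (deterministic, $o(1)$) safety-stock terms, and $\int_0^t 1_{\bfB(n,u)}\,d\hat I^n_2(u)$.

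The Brownian terms are handled by a Doob/maximal inequality argument: using the decomposition of $\hat X^n$ into a centered renewal part plus a deterministic drift (recall \eqref{scaled2} and the fact that $\bar T^n_j(t)\le t$), together with the finite exponential-moment Assumption \ref{ass:ass3}, one gets a bound $\mathbb{E}[\sup_{0\le s\le t}|\hat X^n_j(s)|]^2\le C(1+t^2)$ uniform in $n$ --- this follows from the $L^2$ functional CLT bounds for renewal processes, or more directly from the fact that $\hat X^n_j$ is a martingale plus a Lipschitz drift so that $\mathbb{E}[\sup_{0\le s\le t}|\hat X^n_j(s)|^2]\le C\,\mathbb{E}[(\hat X^n_j(t))^2] + Ct^2 \le C(1+t^2)$. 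Since $\int_0^\infty e^{-\gamma t}(1+t^2)\,dt<\infty$ and $\int_T^\infty e^{-\gamma t}(1+t^2)\,dt\to 0$, these contributions satisfy both \eqref{uniint} and \eqref{uniint1}. The safety-stock terms contribute $C(l_0\log n)^2/n\to 0$, which is negligible after integration.

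The main obstacle is the idle-time term $\int_0^t 1_{\bfB(n,u)}\,d\hat I^n_2(u)$: this is where the large-deviation estimate of Theorem \ref{impthm2} enters, and where one must upgrade ``converges to $0$ in probability'' (Corollary \ref{impcor}) to an $L^2$, uniformly-$e^{-\gamma t}$-integrable statement. The plan is: first bound $\int_0^t 1_{\bfB(n,u)}\,d\hat I^n_2(u)\le \hat I^n_2(t)$ and use the crude bound $\hat I^n_2(t)=\sqrt n(t-\bar T^n_3(t))\le \hat W^n_2(t) + |\hat X^n_2(t)+\hat X^n_3(t)|/\mu^n_3$ together with $\hat W^n_2(t) = \hat Q^n_2(t)/\mu^n_3 + \hat Q^n_3(t)/\mu^n_3$ and the deterministic bounds $\hat Q^n_3(t)\le \sup_s\hat Q^n_3(s)$, controlled via Theorem \ref{impthm1}; second, split according to whether the exceptional event $\cle_\kappa(n,t)\cup\{\int 1_{\bfB}d\hat I^n_2\neq 0\}$ occurs. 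On the complement the integral is $0$; on the event, one bounds the integral by $\hat I^n_2(t)\le C\,\hat W^n_2(t) + C|\hat X^n(t)|$ and applies Cauchy--Schwarz with the probabilities from Theorems \ref{impthm1}--\ref{impthm2}, which decay like $\theta_1(nt+1)^2 e^{-\theta_2 nt} + \theta_3(nt+1)^3 n^{-\theta_4(c-1)l_0}$ and the analogue with $\gamma_i$. With $c,l_0$ chosen as in Corollary \ref{impcor} (so $\gamma_4 l_0>3$ and $\theta_4(c-1)l_0$ large), these polynomial-in-$n$ negative powers beat the polynomial-in-$t$ growth of $\mathbb{E}[|\hat W^n_2(t)|^4]^{1/2}\le C(1+t^2)$ (this fourth-moment bound again comes from the martingale structure plus Assumption \ref{ass:ass3}), yielding both \eqref{uniint} and \eqref{uniint1}; the tail statement \eqref{uniint1} follows since each piece's $\int_T^\infty e^{-\gamma t}(\cdots)\,dt$ vanishes as $T\to\infty$. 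Throughout, one must also double-check that the $L^4$ moment bounds on $\hat X^n$ are uniform in $n$, which is exactly the role of the exponential moment Assumption \ref{ass:ass3} via Lemma \ref{LDP-renewal}-type estimates.
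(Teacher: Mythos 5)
Your overall route for $i=2$ (Skorohod representation \eqref{exp1}, Lipschitz property of $\Gamma$, split into a renewal term, a negligible safety-stock term, and the idle-time integral $\int_0^t \mathbf{1}_{\bfB}\,d\hat I^n_2$) and for $i=1$ (reduce to $i=2$ via \eqref{estimate}) matches the structure of the paper's proof, which simply defers to Theorem 4.11 of \cite{AA} for $i=2$ and uses \eqref{estimate1020} for $i=1$. However, there are two concrete gaps in your execution.

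First, your parenthetical claim that the renewal-term bound follows ``more directly from the fact that $\hat X^n_j$ is a martingale plus a Lipschitz drift'' is false under the paper's hypotheses. When inter-arrival and service times are general (not exponential), the centered counting process $N(t)-\nu t$ is not a martingale, so Doob's maximal inequality does not apply to $\hat A^n_k$ or $\hat S^n_j$. This is precisely the point the paper singles out: it replaces the Doob step from \cite{AA} with Lorden's inequality for renewal processes (citing Eq.\ (172) of \cite{bellwill01}). Your earlier vague phrase about ``$L^2$ functional CLT bounds for renewal processes'' gestures in the right direction, but the concrete argument you wrote is wrong.

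Second, your treatment of the idle-time term is circular. You propose to bound $\int_0^t\mathbf{1}_{\bfB}\,d\hat I^n_2 \le \hat I^n_2(t)$ and then $\hat I^n_2(t)\le C\,\hat W^n_2(t) + C|\hat X^n(t)|$, and then control the resulting Cauchy--Schwarz factor by the moment bound $\mathbb{E}[|\hat W^n_2(t)|^4]^{1/2}\le C(1+t^2)$ ``from the martingale structure.'' But $\hat W^n_2$ depends on the control through $\hat I^n_2$, is not a martingale, and a uniform-in-$n$ fourth-moment bound for it is essentially a strengthening of the very statement you are trying to prove; you cannot invoke it as an input. A non-circular substitute is the deterministic bound $\hat I^n_2(t)=\sqrt{n}\,(t-\bar T^n_3(t))\le \sqrt{n}\,t$, which, combined with Cauchy--Schwarz and the probability estimate of Theorem \ref{impthm2}, does close the argument but introduces an extra power of $n$ in the estimate, so one needs the threshold parameter $l_0$ (equivalently $\gamma_4 l_0$) somewhat larger than the ``$\gamma_4 l_0 > 3$'' you quote. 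This is harmless since $l_0$ is a free parameter in Theorem \ref{main2}, but as written your argument does not carry the computation through and relies on an unavailable moment bound.
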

\begin{proof}
	We only prove \eqref{uniint1}. The proof of \eqref{uniint} is similar. For the case $i=2$
	the proof of \eqref{uniint1} is identical to that of Theorem 4.11 of \cite{AA}, with one  modification. 
	Unlike \cite{AA}, here  we consider arrival and service processes that are general renewal processes rather than Poisson processes. Thus one cannot directly apply Doob's maximal inequality  to  bound $\mathbb{E}\bigl\{\displaystyle\sup_{0\leq s \leq t}\big|\hat{A}^n_i(s)\big|\bigr\}^2$ and $\mathbb{E}\bigl\{\displaystyle\sup_{0\leq s \leq t}\big|\hat{S}^n_i(s)\big|\bigr\}^2$. 
 However, these quantities can be bounded using Lorden's inequality  as in \cite{bellwill01} (see Equation (172) therein) and we omit the details. Consider now $i=1$.
	From \eqref{estimate} 
	\begin{align}\label{estimate1020}
	 \sup_{0\le s\le t} \hat W^n_1(s)  &  \le 2\sup_{0 \leq s \leq t}\left|\frac{\hat{X}^n_1(s)}{\mu^n_1}+\frac{\hat{X}^n_2(s)}{\mu^n_2}\right| + \frac{6\mu_3}{\mu_2}\sup_{0 \leq s \leq t}\hat W^n_2(s) + \frac{3g_0}{\sqrt{n}}.
	\end{align}
	
	The result for $i=1$ now follows from the above estimate along with the property 
	\eqref{uniint1} for $\hat W^n_2$.
\end{proof}

We now prove the main result of this section, Theorem \ref{main2}.

\begin{proof}[Proof of Theorem \ref{main2}.] Let $c, \bar l$ be as in Corollary \ref{impcor} and consider the 
sequence $\{T^n\}$ with threshold parameters $l_0 \ge \bar l$, $g_0 > 0$ and $c$ as above.	
 From Lemma \ref{implem} and functional central limit theorem for renewal processes, it follows that
\be\label{BM-conv2}
\hat{X}^n\Rightarrow {X},
\ee
where ${X}$ is as introduced below \eqref{BM-conv}. 
Define processes $\hat Z^n_i$, $i=1,2$ as
\begin{align*}
	\hat Z^n_1(t) &\doteq	\frac{\hat{X}^n_1(t)}{\mu^n_1}+\frac{\hat{X}^n_2(t) }{\mu^n_2} - \Psi(\hatw^n_2(t))  -  (\hatw^n_1(t)-\Psi(\hat W^n_2(t)))1_{\bfC(n,t)},\\
\hat Z^n_2(t) &\doteq \frac{\hat{X}^n_2(t)}{\mu^n_3}+\frac{\hat{X}^n_3(t)}{\mu^n_3}-\frac{\hat{Q}^n_2(t)}{\mu^n_3}1_{\bfB(n,t)^c}+\int_0^t 1_{\bfB(n,s)}d\hat{I}^n_2(s).
\end{align*}
Also let $\cli: D([0,\infty):\RR) \to D([0,\infty):\RR)$ be the identity map.
From \eqref{exp1} and \eqref{exp3}  we have for $t\ge 0,$
\begin{align}
	\hati^n_1(t) & = (\Gamma - \cli)(\hat Z^n_1)(t), \label{eq:eq1043}\\
	\hati^n_2(t) & =  (\Gamma - \cli)(\hat Z_2^n)(t) + \int_0^t1_{\bfB(n,s)}d\hat{I}^n_2(s).\label{eq:eq1043b}
\end{align}
Next, from \eqref{BM-conv}, 
\be\label{main-conv-1}
\frac{\hat{X}^n_2}{\mu^n_3}+\frac{\hat{X}^n_3}{\mu^n_3} \Go \frac{{X}_2}{\mu_3}+\frac{{X}_3}{\mu_3} = B_2, \;\;\; \frac{\hat{X}^n_1}{\mu^n_1}+\frac{\hat{X}^n_2}{\mu^n_2} \Go \frac{{X}_1}{\mu_1}+\frac{{X}_2 }{\mu_2} = B_1.
\ee
Applying \eqref{main-conv-1} and the third convergence statement in Corollary \ref{impcor} to \eqref{eq:eq1043b}, and recalling $(W^*_2, I^*_2)$ defined in \eqref{workload-general-solu} and 
\eqref{optimal-idle}, we see that
\begin{equation}
	\label{eq:eq1101}
	(\hat W^n_2, \hat I^n_2) \Rightarrow (W_2^*, I_2^*).
\end{equation}
Also from Lemma \ref{lower-bound-est}, for some $c_1, c_2\in (0,\infty),$
\begin{align}
& |\hatw^n_1(t)-\Psi(\hat W^n_2(t))|1_{\bfC(n,t) } \nonumber\\
& \le \frac{g_0}{\sqrt{n}} + \frac{\sqrt{n}}{\mu^n_1}  \left|1- \frac{\mu_3\mu^n_2}{\mu^n_3\mu_2}\right| \barq^n_1(t) +  \frac{\sqrt{n}}{\mu^n_2}  \left|1- \frac{\mu_3\mu^n_2}{\mu^n_3\mu_2}\right| (2\barq^n_2(t)+ \frac{1}{n}) + \frac{c_1(C^n-L^n+2)}{\sqrt{n}}+\frac{c_2}{\sqrt{n}}.\label{eq:eq1049}
\end{align}
From Assumption \ref{htc} we have
\[
\sqrt{n}\left|1- \frac{\mu_3\mu^n_2}{\mu^n_3\mu_2}\right|  \to \frac{|\mu_3b_3 - \mu_2b_2|}{\mu_3}, \]
and from Lemma \ref{implem}, $\barq^n \Go 0$.
Using these observations in \eqref{eq:eq1049}
we see that 
\be\label{main-conv-3}
(\hatw^n_1(\cdot)-\Psi(\hat W^n_2(\cdot)))1_{\bfC(n,\cdot) } \Go 0.
\ee
Applying the above result and \eqref{main-conv-1}, \eqref{eq:eq1101} to \eqref{eq:eq1043}, we now get
$(\hat W^n_1, \hat I^n_1) \Rightarrow (W_1^*, I_1^*).$
In fact we have shown $(\hat W^n, \hat I^n) \Rightarrow (W^*, I^*)$ in $D([0,\infty): \RR^4)$.  This proves part (i) of the theorem.
For the second part of the theorem, we observe that from Theorem \ref{impthm3} and first part of this theorem,
\begin{align}\label{conclusion1}
\int_0^{\infty}e^{-\gamma t}\mathbb{E}(\hat{W}^n_i(t))dt \rightarrow \int_0^{\infty}e^{-\gamma t}\mathbb{E}({W}^*_i(t))dt,\,\,\,\,i=1,2.
\end{align}
From \eqref{scaled-workload} we see that
$$\bfA(n,nt) = \left\{\hat{Q}^n_3(t)-\frac{\mu_2^n}{\mu_1^n}\hat{Q}^n_1(t) < \frac{L^n}{\sqrt{n}}\right\}
=  \left\{\mu_3^n\hat{W}^n_2(t)-\mu_2^n\hat{W}^n_1(t) < \frac{L^n}{\sqrt{n}}\right\}.$$
Combining \eqref{conclusion1} with Lemma \ref{techlem} and using the definition of $Q_2^*$ we now have 
exactly as in the proof of 
Theorem 4.2 of \cite{AA},
\begin{equation}
\int_0^{\infty}e^{-\gamma t}\mathbb{E} (\mu_2^n\hat{W}^n_1(t) 1_{\bfA(n,nt)^c}) dt +
\int_0^{\infty}e^{-\gamma t}\mathbb{E} (\mu_3^n\hat{W}^n_2(t) 1_{\bfA(n,nt)}) dt 
\to \int_0^{\infty}e^{-\gamma t}\mathbb{E} (Q^*_2(t)) dt.
\label{conclusion2}
\end{equation}
Finally using \eqref{conclusion1}, \eqref{conclusion2}, \eqref{scaled-workload} and Corollary \ref{impcor} it immediately follows that
\begin{align}
\int_0^{\infty}e^{-\gamma t}\mathbb{E}(\hat{Q}^n_i(t))dt \rightarrow \int_0^{\infty}e^{-\gamma t}\mathbb{E}({Q}^*_i(t))dt, \,\,\,i=1,2,3.
\end{align}
The result now follows from the definitions of $\hat{J}^n(T^n)$ and $J^*(0)$.  
\end{proof}


\section{Proofs of Theorems \ref{impthm1} and \ref{impthm2}, and Lemma \ref{lower-bound-est}}\label{impthms}
We begin with the following large deviations estimate for renewal processes, which will be extensively used in our proofs.
In the form stated below, the result can be found in \cite{bellwill05} (see also \cite{bellwill01}).
\begin{lemma}[Lemma 6.7 of \cite{bellwill05}]\label{LDP-renewal}
Let $\{\eta_i\}_{i=1}^\infty$ be a sequence of independent strictly positive random variables, where $\{\eta_i\}_{i=2}^\infty$ are identically distributed with finite mean $1/\nu$, and $\eta_1$ may have a different distribution from $\eta_i, i\ge 2$. Assume that there is a nonempty open neighborhood $\clo$ of $0\in\RR$ such that  
\be\label{eq:eq240}
\Lambda(l) \doteq \log \EE (e^{l\eta_i}) < \infty \ \mbox{for all $l\in \clo$ and $i\ge 2$.}
\ee
For each $n\in\NN$, let $\nu^n > 0$ be such that $\lim_{n\to\infty}\nu^n = \nu$, and for each $n\in\NN$ and $i=1, 2,3,\ldots$, let 
$
\eta^n_i = \frac{\nu}{\nu^n} \eta_i.
$
Given $\epsilon \in (0, \nu/2)$, let $n(\{\nu^n\}, \epsilon) \in \NN$ be such that when $n\ge n(\{\nu^n\}, \epsilon),$
\begin{align*}
 |\nu^n - \nu|  < \epsilon, \;\;\;\;\;
 \frac{\nu^n}{\nu} \frac{1}{\nu^n + \frac{\epsilon}{2}}  \le \frac{1}{\nu} \frac{1}{1 + \frac{\epsilon}{3\nu}} < \frac{1}{\nu},
\;\;\;\;\;   \frac{1}{\nu} \left( 1+ \frac{\epsilon}{2(\nu^n - \epsilon)} \right) \ge \frac{1}{\nu} \left( 1+ \frac{\epsilon}{2\nu}\right) > \frac{1}{\nu}.
\end{align*}
%
For each $n\in\NN$ and $t\ge 0$, let 
\bes
N^n(t) = \sup\left\{k \ge 0: \sum_{i=1}^k \eta_i^n \le t \right\}.
\ees
Let
\bes
\Lambda^*(x) = \sup_{l\in \RR} (lx - \Lambda(l)), \; x \in \RR
\ees
be the Legendre-Fenchel transform of $\Lambda$.
Let $\vt_1(\nu, \eps) = \Lambda^*\left(\frac{1}{\nu} \frac{1}{1 + \frac{\epsilon}{3\nu}}  \right)$
and $\vt_2(\nu, \eps) = \Lambda^*\left(\frac{1}{\nu} \left( 1+ \frac{\epsilon}{2\nu}\right)\right)$.
Then $\Theta_i(\nu,\eps) > 0$ for $i=1,2$, and for $n\ge n(\{\nu^n\}, \epsilon)$ and $t\ge 2/\epsilon$, 
\be\label{LDP-upper}\ba
\PP(N^n(t) > (\nu^n + \epsilon) t)  \le \exp\left(-[(\nu^n + \epsilon)t -1] \vt_1(\nu,\eps) \right)
 \le \exp\left(-(\nu t -1) \vt_1(\nu,\eps)   \right),
\ea\ee
and for $n \ge n(\{\nu^n\}, \epsilon)$ and $t\ge 0$, 
\be\label{LDP-lower}\ba
\PP(N^n(t) < (\nu^n - \epsilon) t)  &\le \exp\left(-(\nu^n - \epsilon)t \vt_2(\nu,\eps) \right) + \PP\left(\eta^n_1 > \frac{\epsilon}{2\nu^n} t \right)\\
& \le \exp\left(-(\nu  - 2\epsilon) t  \vt_2(\nu,\eps) \right) + \PP\left(\eta^n_1 > \frac{\epsilon}{2\nu^n} t \right),
\ea\ee
Furthermore, if $\eta_1$ has the same distribution as $\eta_i, i\ge 2$, then for each $n\ge 1, t\ge 0$, and $0< p_0\in \clo$ and any $m\in \NN$, 
\be\label{LDP-lower-2}
\PP\left( \max_{i=1, \ldots, m} \eta^n_i > \frac{\epsilon}{2\nu^n} t\right) \le m \exp\left(-\frac{p_0 \epsilon t}{2\nu} \right) \exp(\Lambda(p_0)).
\ee

\end{lemma}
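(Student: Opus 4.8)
The plan is a one-sided Cram\'er/Chernoff argument, organized around the two deterministic identities
\[
\{N^n(t)\ge k\}=\Bigl\{\textstyle\sum_{i=1}^k\eta_i^n\le t\Bigr\},\qquad \{N^n(t)<k\}=\Bigl\{\textstyle\sum_{i=1}^k\eta_i^n>t\Bigr\},\qquad k\in\NN,
\]
together with the scaling relation $\EE(e^{l\eta_i^n})=\EE(e^{(l\nu/\nu^n)\eta_i})=e^{\Lambda(l\nu/\nu^n)}$, valid for $i\ge2$ whenever $l\nu/\nu^n\in\clo$. First I would record the basic properties of $\Lambda$: as the log–moment generating function of a strictly positive variable it is convex on $\clo$, $\Lambda(0)=0$, and $\Lambda'(l)=\EE(\eta_i e^{l\eta_i})/\EE(e^{l\eta_i})>0$ for all $l\in\clo$, with $\Lambda'(0)=1/\nu$. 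Hence $l\mapsto lx-\Lambda(l)$ is concave with derivative $x-1/\nu$ at $0$, so it is strictly positive for small $l$ of the sign of $x-1/\nu$; this yields $\Lambda^*(x)>0$ for every $x\ne1/\nu$ and in particular $\vt_1,\vt_2>0$ (their arguments lying strictly below and strictly above $1/\nu$ since $\epsilon>0$). The same concavity gives $\Lambda^*(x)=\sup_{l\le0}(lx-\Lambda(l))$ for $x\le1/\nu$ and $\Lambda^*(x)=\sup_{l\ge0}(lx-\Lambda(l))$ for $x\ge1/\nu$, which legitimizes the one-sided bounds $\PP(\tfrac1m\sum_{i=1}^m\eta_i\le x)\le e^{-m\Lambda^*(x)}$ for $x\le1/\nu$ and $\PP(\sum_{i=1}^m\eta_i\ge s)\le e^{-ls+m\Lambda(l)}$ for $l\ge0$, and shows $\Lambda(l^*)>0$ at the optimizer $l^*>0$ of $\Lambda^*(x)$ whenever $x>1/\nu$.

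For \eqref{LDP-upper}: let $k$ be the least integer exceeding $(\nu^n+\epsilon)t$. By positivity of $\eta_1^n$, $\{N^n(t)>(\nu^n+\epsilon)t\}=\{\sum_{i=1}^k\eta_i^n\le t\}\subseteq\{\tfrac1m\sum_{i=2}^k\eta_i\le\tfrac{\nu^n t}{\nu m}\}$ with $m\doteq k-1\ge(\nu^n+\epsilon)t-1$. Since $t\ge2/\epsilon$ forces $m\ge(\nu^n+\tfrac\epsilon2)t$, the second of the inequalities defining $n(\{\nu^n\},\epsilon)$ bounds the threshold by $\tfrac1\nu\tfrac1{1+\epsilon/(3\nu)}<1/\nu$, so the one-sided Chernoff bound gives $e^{-m\vt_1}\le e^{-[(\nu^n+\epsilon)t-1]\vt_1}\le e^{-(\nu t-1)\vt_1}$, the last step from $\nu^n>\nu-\epsilon$.

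For \eqref{LDP-lower}: let $k$ be minimal with $k\ge(\nu^n-\epsilon)t$; integrality of $N^n(t)$ gives $\{N^n(t)<(\nu^n-\epsilon)t\}\subseteq\{\sum_{i=1}^k\eta_i^n>t\}$ with $m\doteq k-1\le(\nu^n-\epsilon)t$. Since $\eta_1$ may have a heavier tail than the later $\eta_i$ (its moment generating function need not be finite near $0$), it must be peeled off:
\[
\Bigl\{\textstyle\sum_{i=1}^k\eta_i^n>t\Bigr\}\subseteq\Bigl\{\eta_1^n>\tfrac\epsilon{2\nu^n}t\Bigr\}\cup\Bigl\{\textstyle\sum_{i=2}^k\eta_i>\tfrac1\nu(\nu^n-\tfrac\epsilon2)t\Bigr\},
\]
the first set giving the additive error term. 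For the second I would use $\PP(\sum_{j=1}^m\eta_j>s)\le e^{-l^*s+m\Lambda(l^*)}$ with $s\doteq\tfrac1\nu(\nu^n-\tfrac\epsilon2)t$ and $l^*\ge0$ the optimizer of $l\mapsto lx_2-\Lambda(l)$ at $x_2\doteq\tfrac1\nu(1+\tfrac\epsilon{2\nu})$. Writing $\Lambda(l^*)=l^*x_2-\vt_2$, and using $\Lambda(l^*)>0$ together with $m\le(\nu^n-\epsilon)t$ to replace $m\Lambda(l^*)$ by $(\nu^n-\epsilon)t\,\Lambda(l^*)$, the exponent is at most $l^*\bigl((\nu^n-\epsilon)t\,x_2-s\bigr)-(\nu^n-\epsilon)t\,\vt_2$; since $s=(\nu^n-\epsilon)t\cdot\tfrac1\nu\bigl(1+\tfrac\epsilon{2(\nu^n-\epsilon)}\bigr)\ge(\nu^n-\epsilon)t\,x_2$ by the third defining inequality, this is $\le-(\nu^n-\epsilon)t\,\vt_2\le-(\nu-2\epsilon)t\,\vt_2$, again by $\nu^n>\nu-\epsilon$. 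Finally, \eqref{LDP-lower-2} is a union bound followed by Markov's inequality applied to $e^{p_0\eta_i}$: when $\eta_1$ has the common law,
\[
\PP\Bigl(\max_{1\le i\le m}\eta_i^n>\tfrac\epsilon{2\nu^n}t\Bigr)\le m\,\PP\Bigl(\eta_2>\tfrac{\epsilon t}{2\nu}\Bigr)\le m\,e^{-p_0\epsilon t/(2\nu)}\EE(e^{p_0\eta_2})=m\,e^{-p_0\epsilon t/(2\nu)}e^{\Lambda(p_0)}.
\]

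The only genuinely delicate point is the bookkeeping in \eqref{LDP-lower}: one must peel off the differently distributed $\eta_1$ as a separate event, and then — because the number $m$ of remaining i.i.d.\ summands is strictly \emph{smaller} than $(\nu^n-\epsilon)t$ — one cannot simply invoke $\PP(\tfrac1m\sum\eta_i\ge x)\le e^{-m\Lambda^*(x)}$; the observation $\Lambda(l^*)>0$ (equivalently, the monotonicity of $m\mapsto m\Lambda^*(s/m)$) is what lets one trade the small count for $(\nu^n-\epsilon)t$, after which the three inequalities defining $n(\{\nu^n\},\epsilon)$ bring the exponents into exactly the stated form. Everything in \eqref{LDP-upper} and \eqref{LDP-lower-2} is routine.
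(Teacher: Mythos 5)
The paper does not prove this lemma; it is quoted verbatim from Bell and Williams \cite{bellwill05} (their Lemma 6.7), so there is no in-paper proof to compare against. Your one-sided Chernoff/Cram\'er derivation is correct and complete, and you have isolated the one genuinely non-routine step accurately: in \eqref{LDP-lower} the number $m=k-1$ of i.i.d.\ summands satisfies $m<(\nu^n-\eps)t$, so one cannot invoke $\PP(\sum\eta_i\ge s)\le e^{-m\Lambda^*(s/m)}$ and then simply replace $m$ by $(\nu^n-\eps)t$; instead one must fix the tilt $l^*$ at (or near) the maximizer of $l\mapsto lx_2-\Lambda(l)$ with $x_2=\frac{1}{\nu}(1+\frac{\eps}{2\nu})$, use $\Lambda(l^*)\ge l^*/\nu>0$ (convexity plus $\Lambda'(0)=1/\nu$) to pass from $m\Lambda(l^*)$ to $(\nu^n-\eps)t\,\Lambda(l^*)$, and then let the third inequality in the definition of $n(\{\nu^n\},\eps)$ kill the residual $l^*\bigl((\nu^n-\eps)tx_2-s\bigr)\le0$. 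The peeling of the possibly heavier-tailed $\eta_1^n$ into its own event, the reduction of \eqref{LDP-upper} to the second defining inequality via $m\ge(\nu^n+\eps)t-1\ge(\nu^n+\eps/2)t$ when $t\ge2/\eps$, and the union bound plus exponential Markov giving \eqref{LDP-lower-2} are all handled as they should be. The single point worth tightening is that you speak of ``the optimizer $l^*$'' as if the supremum defining $\Theta_2(\nu,\eps)$ were always attained in the interior of the effective domain of $\Lambda$; when the domain is bounded the supremum may only be approached, but running your computation with a near-optimizer and letting it approach the supremum yields the stated exponent, so this is a presentational rather than a mathematical gap.
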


For $k=1,2$ and $j=1,2,3$, denote by $\Lambda^*_{a,k}$ and $\Lambda^*_{s,j}$ the Legendre-Fenchel transform of $\Lambda_{a,k}$ and $\Lambda_{s,j}$, respectively, where the latter functions are as introduced in Assumption \ref{ass:ass3}.
These transforms along with Lemma \ref{LDP-renewal} applied with sequences $\{v_j(i)/\mu_j\}_{i \in \NN}$,
$\{u_k(i)/\lambda_k\}_{i\in \NN}$, $j=1,2,3$, $k=1,2$, will play a key role in the proofs of Theorems \ref{impthm1} and \ref{impthm2}.

 Let $\{\clg^n_1(t)\}_{t\ge 0}$ be the filtration in Section \ref{sec:schcont} associated with the sequence $\{T^n\}$. We next introduce 
a family of $\{\clg^n_1(t)\}$-stopping times as follows: For $k\in\NN,$
\be\label{stopping-times}\ba
\tau^n_0 & \doteq 0, \\
\tau^n_{2k-1} & \doteq \inf\left\{ t > \tau^n_{2k-2}: Q^n_3(t) -  \frac{\mu^n_2}{\mu^n_1}Q^n_1(t) \ge \bfl^n\right\}, \\
\tau^n_{2k} & \doteq \inf\left\{ t > \tau^n_{2k-1}: Q^n_3(t) -  \frac{\mu^n_2}{\mu^n_1}Q^n_1(t) < \bfl^n, Q^n_3(t) < \bfc^n -1\right\}.
\ea\ee

\subsection{Proof of Theorem \ref{impthm1}}
Throughout this section $\{T^n\}$ will denote 
the sequence of  control policies   in Definition \ref{propolicy} with some choice of threshold parameters.
Note that on $[\tau^n_{2k-2}, \tau^n_{2k-1}), k\in\NN$, $Q^n_3(\cdot)$ starts from below $\bfc^n -1$, and whenever $Q^n_3(\cdot)$ becomes larger than or equal to $\bfc^n -1$, Server 1 stops serving Buffer 2, which causes $Q^n_3(\cdot)$ to decrease. Thus we have 
\bes
Q^n_3(t) \le \bfc^n, \ \mbox{for all $t\in [\tau^n_{2k-2}, \tau^n_{2k-1})$, and $k\in\NN.$} 
\ees
Since $\kappa \ge \frac{c}{c-1}$, $C^n \le \kappa (C^n-L^n+1)$ and so we have for $s\in [\tau^r_{2k-2}, \tau^n_{2k-1}), k\in\NN,$
\be\label{eqn:1}
Q^n_3(s) \bfi_{\bfA(n,s)} \le \kappa(\bfc^n - \bfl^n +1). 
\ee
We also note that for $s\in [\tau^r_{2k-2}, \tau^n_{2k-1}), k\in\NN,$ $Q^n_3(s) - \frac{\mu^n_2}{\mu^n_1}Q^n_1(s) < \bfl^n$. Thus
\be\label{eqn:2}
Q^n_1(s) \bfi_{\bfA(n,s)^c} =0.
\ee
In view of \eqref{eqn:1} and \eqref{eqn:2}, to estimate $\PP(\cle(n,t))$, it suffices to focus on $[\tau^n_{2k-1}, \tau^n_{2k}), k\in\NN$. Note that each $\tau^n_{2k-1}$ corresponds to a up-crossing of $Q^n_3(\cdot) - \frac{\mu^n_2}{\mu^n_1}Q^n_1(\cdot)$ from smaller than $\bfl^n$ to become larger than or equal to $\bfl^n$. Each up-crossing requires at least one service completion of Server 1. Let 
\be\label{number}
\bfk^n = \lfloor n t (\lambda^n_1 + \lambda^n_2+2)\rfloor +1.
\ee 
Then we have that 
\bes\ba
\PP(\tau^n_{2\bfk^n-1} \le n t) & \le \PP(S^n_1(T^n_1(n t)) + S^n_2(T^n_2(n t)) \ge \bfk^n) \\
& \le \PP(A^n_1(n t) + A^n_2(n t) \ge \bfk^n) \\
& \le \PP(A^n_1(n t) + A^n_2(n t) \ge n t (\lambda^n_1 + \lambda^n_2+2)) \\
& \le  \PP(A^n_1(n t)  \ge n t (\lambda^n_1 + 1)) + \PP(A^n_2(n t) \ge n t ( \lambda^n_2+1)).
\ea\ees
From \eqref{LDP-upper} in Lemma \ref{LDP-renewal}, when $n\ge \max\{n(\{\lambda^n_1\},1), n(\{\lambda^n_2\}, 1)\}$ and $nt \ge 2$, we have 
\be\label{LDP-estimate-1}\ba
 \PP(\tau^n_{2\bfk^n-1} \le n t) \le \exp\left\{- (\lambda_1 nt -1) \Theta^{a,1}_1(\lambda_1,1) \right\} + \exp\left\{- (\lambda_2 nt -1) \Theta^{a,2}_1(\lambda_2,1)\right\},
\ea\ee
where for $\eps>0$, $\Theta^{a,i}_1(\lambda_i, \eps)$, $i=1,2$, are defined as $\Theta_1$ in Lemma \ref{LDP-renewal} on replacing $\Lambda^*$
with $\Lambda^*_{a,i}$.
Also,
\begin{align}
  &\PP(\tau^n_{2\bfk^n-1} > n t, \ \cle(n, t)) \nonumber\\
& \le  \sum_{k=1}^{\bfk^n} \PP\left(\sup_{s \in [\tau^n_{2k-1},  \tau^n_{2k}\wedge nt]} Q^n_3(s) \bfi_{\bfA(n,s)} > \kappa(\bfc^n - \bfl^n +1), \tau^n_{2k-1} \le nt \right) \nonumber\\
 &  \quad + \sum_{k=1}^{\bfk^n}  \PP\left( \sup_{s\in [\tau^n_{2k-1}, \tau^n_{2k}\wedge nt]} Q^n_1(s) \bfi_{\bfA(n,s)^c} > \kappa(\bfc^n - \bfl^n +1), \tau^n_{2k-1} \le nt\right) \nonumber\\
 & \le \sum_{k=1}^{\bfk^n} \PP\left(Q^n_3(s) > \kappa(\bfc^n - \bfl^n +1) \mbox{ and }  \bfA(n,s), \mbox{ for some $s \in [\tau^n_{2k-1},  \tau^n_{2k}\wedge nt]$ } \right) \nonumber\\
 &  \quad +  \sum_{k=1}^{\bfk^n} \PP\left(Q^n_1(s) > \kappa(\bfc^n - \bfl^n +1) \mbox{ and } \bfA(n,s)^c,  \mbox{ for some $s \in [\tau^n_{2k-1},  \tau^n_{2k}\wedge nt]$ } \right) \label{eq:eq519}
\end{align}
Next, for each $k\in\NN$, we define a sequence of stopping times within $[\tau^n_{2k-1}, \tau^n_{2k}).$ 
Let 
$$\bfH(n,t) = \left\{Q^n_1(t) \ge \frac{\mu^n_1}{\mu^n_2} (\bfc^n-\bfl^n+2)\right\}, \;\;\; \bfG(n,t) = \{Q^n_3(t) \ge (\bfc^n-1)\}.$$
For $l\in\NN,$ 
\be\label{substopping-times}\ba
\eta^{n,k}_{0} & \doteq \tau^n_{2k-1}, \\
\eta^{n,k}_{2l-1} & \doteq \tau^{n}_{2k} \wedge \inf\left\{t\ge \eta^{n,k}_{2l-2}:  
\left(\bfA(n,t) \cap \bfG(n,t)\right) 
\mbox{or} \ \left(\bfA(n,t)^c \cap \bfH(n,t)\right)\right\},\\
\eta^{n,k}_{2l} & = \tau^{n}_{2k} \wedge \inf\left\{t\ge \eta^{n,k}_{2l-1}:  \bfA(n,t)^c \cap \bfH(n,t)^c\right\}.
\ea\ee
%
Note that 
\begin{align}
  \{\omega: t\in [\eta^{n,k}_{2l-2}, \eta^{n,k}_{2l-1})\} &\subset
 \left\{\bfA(n,t)\cap \bfG(n,t) \right\}^c    \cap \left\{ \bfA(n,t)^c\cap \bfH(n,t)\right\}^c  \cap \left\{ \bfA(n,t)\cap\bfG(n,t)^c\right\}^c \nonumber \\
&\quad =    \bfA(n,t)^c\cap \bfH(n,t)^c. \label{no-effect}
\end{align}
Also, there exists $n_1\in\NN$ such that when $n\ge n_1$, we have $\frac{2\mu_1}{\mu_2}> \frac{\mu^n_1}{\mu^n_2}$. Since $\kappa \ge \frac{4\mu_1}{\mu_2}$, we  have from \eqref{no-effect} that when $n\ge n_1$,
\begin{align*}
\Big\{\sup_{s\in [\eta^{n,k}_{2l-2}, \eta^{n,k}_{2l-1})} Q^n_3(s)1_{\bfA(n,s)} > \kappa (C^n-L^n+1)\Big\} &= \emptyset ,\\
\Big\{\sup_{s\in [\eta^{n,k}_{2l-2}, \eta^{n,k}_{2l-1})} Q^n_1(s)1_{\bfA(n,s)^c} > \kappa (C^n-L^n+1)\Big\} &= \emptyset .
\end{align*}
Thus to  estimate $\PP(\cle(n,t))$, it suffices to consider the intervals $[\eta^{n,k}_{2l-1}, \eta^{n,k}_{2l}), l, k \in \NN$. We now estimate how many such subintervals are within $[\tau^n_{2k-1}, \tau^n_{2k}\wedge nt)$. We observe that each  $\eta^{n,k}_{2l-1}$ corresponds to at least one additional arrival to Buffer 1 or one additional job completion for Buffer 3. Recall $\bfk^n$ defined in \eqref{number}.  As in the proof of \eqref{LDP-estimate-1} we have, for all $n\ge \max\{n(\{\lambda^n_1\},1), n(\{\lambda^n_2\}, 1)\}$ and $nt > 2$, 
\begin{align}
 \PP(\eta^{n,k}_{2\bfk^n-1} \le n t) 
& \le \PP(A^n_1(n t) + S^n_3(T^n_3(nt)) \ge \bfk^n) \nonumber\\
& \le \PP (A^n_1(n t)+A^n_2(nt) \ge \bfk^n) \nonumber\\
& \le  
  \exp\left\{- (\lambda_1 nt -1) \Theta^{a,1}_1(\lambda_1,1) \right\} + \exp\left\{- (\lambda_2 nt -1) \Theta^{a,2}_1(\lambda_2,1)\right\}. \label{LDP-estimate-2}
\end{align}
Thus from \eqref{LDP-estimate-1}, \eqref{eq:eq519} and \eqref{LDP-estimate-2} we have for $n\ge \max\{n(\{\lambda^n_1\},1), n(\{\lambda^n_2\}, 1), n_1\}$ and $nt \ge 2$,
\begin{align*}
\PP(\cle(n,t)) & \le  \sum_{k=1}^{\bfk^n}\sum_{l=1}^{\bfk^n} \PP\left(Q^n_3(s) > \kappa(\bfc^n - \bfl^n +1) \ \mbox{and} \ \bfA(n,s) 
 \mbox{ for some $s \in [\eta^{n,k}_{2l-1},  \eta^{n,k}_{2l}\wedge nt)$ } \right) \\
&+ \sum_{k=1}^{\bfk^n} \sum_{l=1}^{\bfk^n} \PP\left(Q^n_1(s) > \kappa(\bfc^n - \bfl^n +1) \ \mbox{and} \ \bfA(n,s)^c
\mbox{ for some $s \in [\eta^{n,k}_{2l-1},  \eta^{n,k}_{2l}\wedge nt)$ } \right) \\
& + (\bfk^n+1) \left(\exp\left\{- (\lambda_1 nt -1) \Theta^{a,1}_1(\lambda_1,1) \right\} + \exp\left\{- (\lambda_2 nt -1) \Theta^{a,2}_1(\lambda_2,1)\right\}\right).
\end{align*}
Next, on the set
$\{Q^n_3(s) > \kappa(\bfc^n - \bfl^n +1) \ \mbox{and} \ \bfA(n,s)\}$
\begin{align*}
Q^n_1(s) & > \frac{\mu^n_1}{\mu^n_2}(Q^n_3(s) - \bfl^n) > \frac{\mu^n_1}{\mu^n_2}(\kappa(\bfc^n-\bfl^n+1)-\bfl^n) \\
& = \frac{\mu^n_1}{\mu^n_2}\frac{3\kappa}{4}(\bfc^n-\bfl^n+1) + \frac{\mu^n_1}{\mu^n_2}\left(\frac{\kappa}{4}(\bfc^n-\bfl^n+1)-\bfl^n\right) \\
& \ge \frac{\mu^n_1}{\mu^n_2}\frac{3\kappa}{4}(\bfc^n-\bfl^n+1) + \frac{\mu^n_1}{\mu^n_2}\left(\frac{\kappa}{4}\left(c-1\right)-1\right)l_0\log n \\
& > \frac{\mu^n_1}{\mu^n_2}\frac{3\kappa}{4}(\bfc^n-\bfl^n+1), 
\end{align*}
where the last inequality follows on noting that $\kappa > \frac{4}{c-1}$.
Let $n_2\in\NN$ be such that when $n\ge n_2$, we have $\frac{2\mu_1}{3\mu_2} \le \frac{\mu_1^n}{\mu_2^n}$. Letting $\kappa' = \min\{\kappa, \frac{\kappa}{2} \frac{\mu_1}{\mu_2}\}$, we  have that, when $n\ge n_2$,
\bes
Q^n_1(s) > \kappa'(\bfc^n-\bfl^n+1).
\ees
Note that since $\kappa \ge \max\{\frac{2\mu_1}{\mu_2}, 4\}$,  we have that
\be\label{kappa-prime}
\kappa' \ge \frac{2\mu_1}{\mu_2}.
\ee
Since $\kappa \ge \kappa'$, for $n\ge \max\{n(\{\lambda^n_1\},1), n(\{\lambda^n_2\}, 1), n_1, n_2\}$ and $nt \ge 2$,
\be\label{estimate-final-2}\ba
\PP(\cle(n,t)) & \le 2\sum_{k=1}^{k^n} \sum_{l=1}^{k^n} \PP\biggm(Q^n_1(s) > \kappa'(\bfc^n - \bfl^n +1) 
 \mbox{ for some $s \in [\eta^{n,k}_{2l-1},  \eta^{n,k}_{2l}\wedge nt)$ } \biggm)\\
& \quad + (\bfk^n+1)\left(\exp\left\{- (\lambda_1 nt -1) \Theta^{a,1}_1(\lambda_1,1) \right\} + \exp\left\{- (\lambda_2 nt -1) \Theta^{a,2}_1(\lambda_2,1)\right\}\right).
\ea\ee
Next note that
\begin{align}
   &\left\{ \bfA(n,s)^c \cap \bfH(n,s)^c\right\}^c 
 \cap \left\{ \bfA(n,s)\cap \bfG(n,s)^c\right\}^c \nonumber\\
&\quad =  \left\{ \bfA(n,s)\cap \bfG(n,s)\right\} 
  \cup \left\{ \bfH(n,s)\cap \bfG(n,s)\right\} 
\cup  \left\{\bfH(n,s)\cap \bfA(n,s)^c\right\}. \label{effect-1}
\end{align}
From this, we see that for $s\in [\eta^{n,k}_{2l-1}, \eta^{n,k}_{2l})$, $Q^n_1(s) \ge \frac{\mu^n_1}{\mu^n_2}(\bfc^n - \bfl^n -1) > 0$, and Server 1 works on Buffer 1 continuously. 
Also, from \eqref{no-effect}, we have $Q^n_1(\eta^{n,k}_{2l-1}-) < \frac{\mu^n_1}{\mu^n_2}(\bfc^n - \bfl^n +2)$, and so $Q^n_1(\eta^{n,k}_{2l-1}) < \lfloor\frac{\mu^n_1}{\mu^n_2}(\bfc^n - \bfl^n +2)\rfloor+1$.
Using this fact, the probability in \eqref{estimate-final-2}
can be estimated by analyzing a $GI/GI/1$ queue with interarrival times $\tilde u^n_1, u^n_1(k), k=2, 3, \ldots$, service times $\tilde v^n_1, v^n_1(k), k=2, 3, \ldots$, and initial queue length $\lfloor\frac{\mu^n_1}{\mu^n_2}(\bfc^n - \bfl^n +2)\rfloor+1$,
where $\tilde u_1^n$ and $\tilde v^n_1$ are residual interarrival and service times at time $\eta^{n,k}_{2l-1}$ in Buffer 1. Let $\{\clq^n(t)\}_{t\ge 0}$ be this $GI/GI/1$ queue length process and define
\bes
\beta^n = \inf\left\{s > 0: \clq^n(s) < \frac{\mu^n_1}{\mu^n_2}(\bfc^n - \bfl^n -1) \right\}.
\ees
Noting that when $n\ge n_1$, $\frac{2\mu_1}{\mu_2} \ge \frac{\mu^n_1}{\mu^n_2}$ and using \eqref{kappa-prime}, we  see that
\be\label{larger-event}\ba
& \PP\left(Q^n_1(s) > \kappa'(\bfc^n - \bfl^n +1) \ \mbox{for some $s \in [\eta^{n,k}_{2l-1},  \eta^{n,k}_{2l}\wedge nt]$} \right) \\
& \le \PP\left(  \clq^n(s) > \kappa'(\bfc^n - \bfl^n +1) 
\mbox{ for some $s \in [0, \beta^n]$ }\right). 
\ea\ee
Let $\epsilon \in (0, (\mu_1-\lambda_1)/4)$, and define 
\bes
t^n = \frac{\left\lfloor \frac{\mu^n_1}{\mu^n_2}(\bfc^n - \bfl^n -1) -2\right\rfloor}{2(\lambda_1^n + \epsilon)}
\ees
Let $\tilde A^n$ and $\tilde S^n$ be the arrival and service processes of the $GI/GI/1$ queue, and consider
\bes
\clh^n \doteq \left\{ \tilde A^n(t^n) < (\lambda^n_1 + \epsilon) t^n, \ \tilde S^n(t^n) > (\mu^n_1 - \epsilon) t^n \right\}.
\ees
From \eqref{LDP-upper} and \eqref{LDP-lower} in Lemma \ref{LDP-renewal}, for $n \ge \max\{n(\{\lambda^n_1\}, \epsilon), n(\{\mu^n_1\}, \epsilon)\}$, and $t^n \ge 2/\epsilon$, we have 
\bes\ba
 \PP((\clh^n)^c) & \le \PP( \tilde A^n(t^n) \ge (\lambda^n_1 + \epsilon) t^n) + \PP(\tilde S^n(t^n) \le (\mu^n_1 - \epsilon) t^n) \\
 & \le \exp\left(-(\lambda_1 t^n -1) \Theta^{a,1}_1(\lambda_1,\eps) \right) 
 + \exp\left(-(\mu_1  - 2\epsilon) t^n  \Theta^{s,1}_2(\mu_1,\eps) \right) + \PP\left(\tilde v^n_1 > \frac{\epsilon t^n}{2\mu_1^n}  \right),
\ea\ees
where $\Theta^{s,1}_2$ is defined as $\Theta_2$ in Lemma \ref{LDP-renewal} by replacing $\Lambda^*$ with $\Lambda^*_{s,1}$.

Next using \eqref{LDP-upper} and \eqref{LDP-lower-2} in Lemma \ref{LDP-renewal}, 
\be\label{lower-prob-estimate}\ba
\PP\left(\tilde v^n_1 > \frac{\epsilon t^n}{2\mu_1^n}  \right)  
& \le \PP \left(\max_{k=1, \ldots, S^n_1(nt)} v^n_1(k) > \frac{\epsilon t^n}{2\mu_1^n}  \right) \\
& \le \PP \left(S^n_1(nt) > (\mu^n_1 + \epsilon) nt\right)+ \PP \left(\max_{k=1, \ldots, \lfloor (\mu^n_1 + \epsilon) nt \rfloor} v^n_1(k) > \frac{\epsilon t^n}{2\mu_1^n}  \right) \\
& \le \exp\left(-(\mu_1 nt -1) \Theta^{s,1}_1(\mu_1,\eps) \right)  + (\mu^n_1 + \epsilon) nt \exp\left(-\frac{p_0 \epsilon t^n}{2\mu_1} \right) \exp(\Lambda_{s,1}(p_0)),
\ea\ee
where  $\Theta^{s,1}_1$ is defined as $\Theta_1$ in Lemma \ref{LDP-renewal} by replacing $\Lambda^*$ with $\Lambda^*_{s,1}$
and $0 < p_0\in \clo$. Thus for $n \ge \max\{n(\{\lambda^n_1\}, \epsilon), n(\{\mu^n_1\}, \epsilon)\}$, and $t^n \ge 2/\epsilon$, we have 
\be\label{estimate-final-3}\ba
\PP((\clh^n)^c) & \le \exp\left(-(\lambda_1 t^n -1) \Theta^{a,1}_1(\lambda_1,\eps)\right)
+ \exp\left(-(\mu_1  - 2\epsilon) t^n  \Theta^{s,1}_2(\mu_1,\eps) \right) \\
& \quad + \exp\left(-(\mu_1 nt -1) \Theta^{s,1}_1(\mu_1,\eps) \right)  + (\mu^n_1 + \epsilon) nt \exp\left(-\frac{p_0 \epsilon t^n}{2\mu_1} \right) \exp(\Lambda_{s,1}(p_0)).
\ea\ee
Next note that on $\mathcal{H}^n$,
\bes\ba
\clq^n(t^n) & = \clq^n(0) + \tilde A^n(t^n) - \tilde S^n(t^n) \\
& < \frac{\mu^n_1}{\mu^n_2}(\bfc^n - \bfl^n +2)+1 + (\lambda^n_1-\mu^n_1 + 2\epsilon) t^n \\
& = \frac{\mu^n_1}{\mu^n_2}(\bfc^n - \bfl^n +2)+1 - (\mu^n_1-\lambda^n_1 - 2\epsilon)\frac{\left\lfloor \frac{\mu^n_1}{\mu^n_2}(\bfc^n - \bfl^n -1) -2\right\rfloor}{2(\lambda_1^n + \epsilon)}.
\ea\ees
Recall that $\epsilon \in (0, (\mu_1-\lambda_1)/4)$. Thus there exists $n_3\in \NN$ such that when $n\ge n_3$, we have 
\bes\ba
& \clq^n(t^n) - \frac{\mu^n_1}{\mu^n_2}(\bfc^n - \bfl^n -1) \\
& < \frac{3\mu^n_1}{\mu^n_2} + 1 - (\mu^n_1-\lambda^n_1 - 2\epsilon)\frac{\left\lfloor \frac{\mu^n_1}{\mu^n_2}(\bfc^n - \bfl^n -1) -2\right\rfloor}{2(\lambda_1^n + \epsilon)} \\
& < 0. 
\ea\ees
Since $\clq^n(t) \ge \frac{\mu^n_1}{\mu^n_2}(\bfc^n - \bfl^n -1) -1$ for all $t\in [0,\beta^n]$,
we see that for all $n \ge n_3$, on $\mathcal{H}^n$, $t^n \ge \beta^n$.
 Next, on this set, for $s\in [0, \beta^n]$ and $n \ge n_3$, we have 
\bes\ba
\clq^n(s) & = \clq^n(0) + \tilde A^n(s) - \tilde S^n(s) \\
& \le \clq^n(0) + \tilde A^n(s) \\
& < \frac{\mu^n_1}{\mu^n_2}(\bfc^n - \bfl^n +2)+1 + (\lambda^n_1 + \epsilon) t^n \\
& \le  \frac{\mu^n_1}{\mu^n_2}(\bfc^n - \bfl^n +2) + \frac{\mu^n_1}{2\mu^n_2}(\bfc^n - \bfl^n -1)\\
& = \frac{3\mu^n_1}{2\mu^n_2}(\bfc^n - \bfl^n +1).
\ea\ees
Recall from \eqref{kappa-prime} that $\kappa' > \frac{2\mu_1}{\mu_2}$, and so there exists $n_4\ge n_3$ such that when $n\ge n_4$, we have $\kappa' >  \frac{3\mu^n_1}{2\mu^n_2}$, and so for such $n$, on $\mathcal{H}^n$,
\bes
\clq^n(s) < \kappa' (\bfc^n - \bfl^n +1) \ \ \mbox{for all $s\in [0, \beta^n]$}.
\ees
Let $\cli^n =\{\clq^n(s) > \kappa'(\bfc^n - \bfl^n +1) 
\mbox{ for some $s \in [0, \beta^n]$ } \}$.
Then for  $n \ge n_4$, $\PP(\clh^n \cap \cli^n) =0$, and so for $n\ge \max\{n(\{\lambda^n_1\},\epsilon), n(\{\lambda^n_2\}, \epsilon), n_1, n_4\}$, the probability in \eqref{larger-event} is bounded by $\PP((\clh^n)^c).$ Combining this with \eqref{estimate-final-2} and \eqref{estimate-final-3}, we finally have for sufficiently large $n$  and $t\ge 2/n$, 
\be\label{final-est-1}\ba
\PP(\cle(n,t)) & \le (\bfk^n+1)\left(\exp\left\{- (\lambda_1 nt -1) \Theta^{a,1}_1(\lambda_1,1) \right\} + \exp\left\{- (\lambda_2 nt -1) \Theta^{a,2}_1(\lambda_2,1)\right\}\right)\\
& \quad + 2 (\bfk^n)^2 \exp\left(-(\lambda_1 t^n -1) \Theta^{a,1}_1(\lambda_1,\eps) \right) 
 + 2 (\bfk^n)^2\exp\left(-(\mu_1  - 2\epsilon) t^n  \Theta^{s,1}_2(\mu_1,\eps) \right) \\
& \quad + 2 (\bfk^n)^2 \exp\left(-(\mu_1 nt -1) \Theta^{s,1}_1(\mu_1,\eps) \right) 
 + 2 (\bfk^n)^2 (\mu^n_1 + \epsilon) nt \exp\left(-\frac{p_0 \epsilon t^n}{2\mu_1} \right) \exp(\Lambda_{s,1}(p_0)).
\ea\ee
Thus we have shown that, there exist $\theta_1, \theta_2, \theta_3, \theta_4 \in (0,\infty)$ such that for sufficiently large $n$  and $t\ge 2/n$, we have 
\bes
\PP(\cle(n,t)) \le \theta_1 (nt+1)^2 \exp\{-\theta_2 (nt + 1)\} + \theta_3 (nt +1)^3 n^{-\theta_4 (c-1)l_0}. 
\ees
\qed

\subsection{Proof of Theorem \ref{impthm2}}
Throughout this section $\{T^n\}$ will denote 
the sequence of  control policies   in Definition \ref{propolicy} with some choice of threshold parameters.

We now introduce a sequence of $\{\clg^n_1(t)\}_{t\ge 0}$ stopping times, which will be  used in the proof of Theorem \ref{impthm2}. First recall the stopping times $\tau^n_l, l\in \NN_0$ in \eqref{stopping-times}. As in \cite{AA}, for $k\in\NN$, we define a sequence of stopping times within $[\tau^n_{2k-2}, \tau^n_{2k-1})$. For $d, l_0 > 0$, let $\bfD^n = d l_0\log n$. For $m\in\NN,$
\be\label{substopping-timesb}\ba
\tilde\eta^{n,k}_0 & \doteq \tau^n_{2k-2}, \\
\tilde\eta^{n,k}_{2m-1} & \doteq \min\left\{\tau^n_{2k-1}, \inf\{s>\tilde\eta^{n,k}_{2m-2}| Q^n_3(s) \ge \bfc^n-1\} \right\} \\
\tilde\eta^{n,k}_{2m} & \doteq \min\left\{\tau^n_{2k-1}, \inf\{s>\tilde\eta^{n,k}_{2m-1}| Q^n_3(s) < \bfc^n-1\} \right\} \\
\beta^{n,k}_m & \doteq \min\left\{\tilde\eta^{n,k}_{2m-1}, \inf\{s>\tilde\eta^{n,k}_{2m-2}| Q^n_2(s) \ge \frac{\bfD^n}{4}\} \right\}. 
\ea\ee
Recall the multi-parameter filtration $\{\clg^n(a,b): a\in\NN^2, b \in \NN^3\}$ and multi-parameter stopping times introduced in Section \ref{sec:schcont}.
Lemma \ref{multi-2} below is taken from \cite{bellwill01}.
\begin{lemma}[Lemma 7.6 of \cite{bellwill01}]\label{multi-2}
Let $\bft = (\bft_1, \bft_2, \bft_3, \bft_4, \bft_5)'$ be a $\{\clg^n(a, b): a\in \NN^2, b\in\NN^3\}$ multi-parameter stopping time. Then 
\bes
(u^n_1(\bft_1), u^n_2(\bft_2), v^n_1(\bft_3), v^n_2(\bft_4), v^n_3(\bft_5))' \in \clg^n_\bft,
\ees
and on $\{\bft \in \NN^5\}$ the conditional distribution of $\{(u^n_1(\bft_1+k), u^n_2(\bft_2+k), v^n_1(\bft_3+k), v^n_2(\bft_4+k), v^n_3(\bft_5+k))': k\in \NN\}$ given $\clg^n_\bft$ is the same as the distribution of $\{(u^n_1(k), u^n_2(k), v^n_1(k), v^n_2(k),$
$ v^n_3(k))': k\in \NN\}$.
\end{lemma}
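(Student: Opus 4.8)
The plan is to prove this as a discrete-parameter version of the strong Markov property, exploiting that the sequences $\{u^n_k(i)\}_{i\ge 1}$, $k=1,2$, and $\{v^n_j(i)\}_{i\ge 1}$, $j=1,2,3$, are i.i.d.\ and mutually independent. \emph{Measurability first.} Fix $(a,b)\in\NN^2\times\NN^3$. On the event $\{\bft=(a,b)\}$ the vector $(u^n_1(\bft_1),u^n_2(\bft_2),v^n_1(\bft_3),v^n_2(\bft_4),v^n_3(\bft_5))'$ coincides with the deterministic-index vector $(u^n_1(a_1),u^n_2(a_2),v^n_1(b_1),v^n_2(b_2),v^n_3(b_3))'$, which by the definition of $\clg^n(a,b)$ is $\clg^n(a,b)$-measurable. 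Hence for every Borel $D\subset\RR^5$ one has $\{(u^n_1(\bft_1),\ldots,v^n_3(\bft_5))'\in D\}\cap\{\bft=(a,b)\}\in\clg^n(a,b)$; as this holds for every $(a,b)$ (and, on $\{\bft\notin\NN^5\}$, with the usual convention of setting undefined entries to a fixed value), the set $\{(u^n_1(\bft_1),\ldots,v^n_3(\bft_5))'\in D\}$ lies in $\clg^n_\bft$, which is the first assertion.

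\emph{Conditional law.} Write $\Xi_k\doteq(u^n_1(k),u^n_2(k),v^n_1(k),v^n_2(k),v^n_3(k))'$ and, for $(a,b)\in\NN^2\times\NN^3$, $\Xi^{(a,b)}_k\doteq(u^n_1(a_1+k),u^n_2(a_2+k),v^n_1(b_1+k),v^n_2(b_2+k),v^n_3(b_3+k))'$, so that on $\{\bft\in\NN^5\}$ the post-$\bft$ sequence is $\{\Xi^{\bft}_k:k\in\NN\}$. By a monotone class argument it is enough to show that for every $m\in\NN$, every bounded Borel $F\colon(\RR^5)^m\to\RR$, and every $A\in\clg^n_\bft$,
\begin{equation}
\EE\bigl[F(\Xi^{\bft}_1,\ldots,\Xi^{\bft}_m)\,1_{A\cap\{\bft\in\NN^5\}}\bigr]=\EE\bigl[F(\Xi_1,\ldots,\Xi_m)\bigr]\,\PP\bigl(A\cap\{\bft\in\NN^5\}\bigr).
\end{equation}
Decomposing over the countably many values of $\bft$, the left-hand side equals $\sum_{(a,b)}\EE[F(\Xi^{(a,b)}_1,\ldots,\Xi^{(a,b)}_m)\,1_{A\cap\{\bft=(a,b)\}}]$. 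The key point is that, for fixed $(a,b)$, mutual independence of the ten sequences together with the i.i.d.\ property of each implies simultaneously (i) that $\sigma\{\Xi^{(a,b)}_k:k\in\NN\}$ is independent of $\clg^n(a,b)$ --- since each shifted tail $\{u^n_1(a_1+k)\}_{k\ge1}$ is independent of $\sigma\{u^n_1(j):j\le a_1\}$, likewise for the other four coordinates, and all of these families are jointly independent --- and (ii) that $\{\Xi^{(a,b)}_k:k\}$ has the same law as $\{\Xi_k:k\}$, by shift-invariance within each coordinate and preservation of the product structure across coordinates. Since $A\in\clg^n_\bft$ yields $A\cap\{\bft=(a,b)\}\in\clg^n(a,b)$, (i) and (ii) give $\EE[F(\Xi^{(a,b)}_1,\ldots,\Xi^{(a,b)}_m)\,1_{A\cap\{\bft=(a,b)\}}]=\EE[F(\Xi_1,\ldots,\Xi_m)]\,\PP(A\cap\{\bft=(a,b)\})$; summing over $(a,b)$ and using countable additivity proves the display, and the monotone class step extends it to the whole infinite sequence.

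\emph{Main obstacle.} The only genuinely delicate point is that the five coordinate sequences are shifted by five \emph{different} random amounts $\bft_1,\ldots,\bft_5$: the fact that the jointly shifted sequence is at once independent of $\clg^n(a,b)$ and equal in law to the original sequence rests entirely on the standing assumption --- imposed just above the definition of $\clg^n$ --- that the interarrival and service sequences are mutually independent and each i.i.d. Everything else is routine: restricting to $\{\bft\in\NN^5\}$ is forced merely because the shifted variables are undefined where some $\bft_i=\infty$, and passing from finite cylinder functionals to the full sequence is the standard monotone-class argument.
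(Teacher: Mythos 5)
Your proof is correct. The paper does not prove this lemma itself --- it is quoted verbatim as Lemma 7.6 of Bell and Williams (2001) --- but the argument you give (decomposing over the countably many values of $\bft$, using $A\cap\{\bft=(a,b)\}\in\clg^n(a,b)$, and exploiting that the five i.i.d.\ and mutually independent primitive sequences make the shifted tail both independent of $\clg^n(a,b)$ and equal in law to the original, then closing with a monotone class step) is the standard discrete strong-Markov argument used there, so your approach is essentially the same as the source proof.
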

The following lemma follows along the lines of Lemma 7.5 of \cite{bellwill01}. The proof is omitted.
\begin{lemma}\label{multi-1}
For $n, k, m\in \NN$, 
\bes
\bft^{n,k}_m \doteq (A^n_1(\beta^{n,k}_m), A^n_2(\beta^{n,k}_m), S^n_1(\beta^{n,k}_m), S^n_2(\beta^{n,k}_m), S^n_3(\beta^{n,k}_m))'
\ees
is a $\{\clg^n(a, b): a\in \NN^2, b\in\NN^3\}$ multiparameter stopping time. 
\end{lemma}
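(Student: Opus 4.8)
The plan is to verify directly that $\{\bft^{n,k}_m = (a,b)\}\in\clg^n(a,b)$ for every $a\in\NN^2$ and $b\in\NN^3$, following the scheme of \cite[Lemma 7.5]{bellwill01}. I would split the argument into two parts: first, establishing that $\beta^{n,k}_m$ is a stopping time of the continuous-parameter filtration $\{\clg^n_1(t)\}_{t\ge0}$ of Section \ref{sec:schcont}, in the sense that $\{\beta^{n,k}_m\le nt\}\in\clg^n_1(t)$ for every $t\ge0$; and second, converting this into the multiparameter stopping-time assertion.

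For the first part, I would observe that condition (iv) of Section \ref{sec:schcont} --- together with \eqref{queue-length} and the fact that $\sigma^n_0(t)$ is itself a multiparameter stopping time --- gives that $I^n(nt)$, hence $T^n(nt)$, hence the queue-length vector $(Q^n_1,Q^n_2,Q^n_3)(nt)$, is $\clg^n_1(t)$-measurable. Every random time occurring in the recursive definitions \eqref{stopping-times} and \eqref{substopping-timesb} --- the $\tau^n_{2k-2},\tau^n_{2k-1}$, the $\tilde\eta^{n,k}_{2m-1},\tilde\eta^{n,k}_{2m}$, and $\beta^{n,k}_m$ itself --- has the form $\inf\{s>\rho_0:(Q^n_1,Q^n_2,Q^n_3)(s)\in F\}$ for a closed $F\subset\RR^3$ and a previously constructed stopping time $\rho_0$, possibly intersected with another such time. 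Since $(Q^n_1,Q^n_2,Q^n_3)$ is right-continuous, piecewise constant, and changes only at the arrival and service epochs, each of which is a stopping time of $\{\clg^n_1(\cdot)\}$ with only finitely many in any bounded interval, every such infimum is again a stopping time, and finite minima preserve this property. Hence $\beta^{n,k}_m$ is a stopping time of $\{\clg^n_1(t)\}_{t\ge0}$.

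For the second part I would argue exactly as in \cite[Lemma 7.5]{bellwill01} (see also the discussion around \cite[Theorem 5.4]{AA1}). Fix $(a,b)$. On the event $\{\bft^{n,k}_m=(a,b)\}$ the identities $A^n_i(\beta^{n,k}_m)=a_i$ and $S^n_j(\beta^{n,k}_m)=b_j$ locate $\beta^{n,k}_m$ between consecutive renewal epochs, and --- because $\beta^{n,k}_m$ is a hitting time of the piecewise-constant process $(Q^n_1,Q^n_2,Q^n_3)$ --- it in fact occurs at one of those epochs, whose position is a measurable function of the inter-arrival times $u^n_i(\cdot)$ up to index $a_i$ and the service times $v^n_j(\cdot)$ up to index $b_j$ (the actual service counts $S^n_j(T^n_j(\cdot))$ driving the dynamics being dominated by the potential counts $S^n_j(\cdot)=b_j$). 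Combined with the almost-sure strict positivity of all inter-arrival and service times, which disposes of the strict upper-endpoint inequalities in $A^n_i(\cdot)=a_i$ and $S^n_j(\cdot)=b_j$, this yields $\{\bft^{n,k}_m=(a,b)\}\in\clg^n(a,b)$, which is the claim.

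The step I expect to be the main obstacle --- and the reason the paper omits the argument --- is the second one: the measurability bookkeeping reconciling the continuous-time stopping time $\beta^{n,k}_m$ with the discrete multiparameter index, that is, tracking exactly which inter-arrival and service variables are needed to decide both the value of $\beta^{n,k}_m$ and each of the counts $A^n_i(\beta^{n,k}_m)$ and $S^n_j(\beta^{n,k}_m)$, while handling the interplay between the potential service processes $S^n_j$ and the actual ones $S^n_j(T^n_j(\cdot))$ together with the role of the unit offsets in the definition of $\sigma^n_0(t)$. Since this verification is carried out in full in \cite[Lemma 7.5]{bellwill01} and transfers essentially verbatim to the present construction, it is natural to only indicate it; the first part is routine.
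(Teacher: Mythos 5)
The paper does not actually prove this lemma; it only records that the result ``follows along the lines of Lemma 7.5 of \cite{bellwill01}.'' Your two-step strategy --- first verifying that $\beta^{n,k}_m$ is a $\{\clg^n_1(t)\}$-stopping time, then invoking the Bell--Williams argument to pass to the multiparameter statement --- is exactly the intended route, and the first step is fine as outlined.

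The step I would push back on is the phrase ``strict positivity disposes of the strict upper-endpoint inequalities.'' Suppose, on the event $\{\bft^{n,k}_m=(a,b)\}$, the time $\beta^{n,k}_m$ coincides with the $a_1$-th Class~$1$ arrival epoch $\xi^n_1(a_1)$. To decide that $A^n_2(\beta^{n,k}_m)=a_2$ one must know that $\xi^n_2(a_2)\le\xi^n_1(a_1)<\xi^n_2(a_2+1)$, and the right-hand inequality depends on $u^n_2(a_2+1)$, which does not belong to $\clg^n(a,b)$. Strict positivity of $u^n_2(a_2+1)$ gives only $\xi^n_2(a_2+1)>\xi^n_2(a_2)$, not $\xi^n_2(a_2+1)>\xi^n_1(a_1)$; the gap $\xi^n_1(a_1)-\xi^n_2(a_2)$ can be arbitrarily large. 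The same objection applies to each of the service components, and your parenthetical about the actual counts $S^n_j(T^n_j(\cdot))$ being dominated by $S^n_j(\cdot)$ does not resolve it. This is precisely why condition (iv) of Section~\ref{sec:schcont}, and the analogous statements in Bell--Williams, carry the ``$+1$'' offsets: for $\sigma^n_0(t)=(A^n_i(nt)+1,\;S^n_j(T^n_j(nt))+1)$, the event $\{A^n_i(nt)+1=a_i\}=\{\xi^n_i(a_i-1)\le nt<\xi^n_i(a_i)\}$ requires only $u^n_i(1),\dots,u^n_i(a_i)\in\clg^n(a,b)$, and the verification closes. The lemma as printed therefore appears to contain a typo: $\bft^{n,k}_m$ should be $\sigma^n_0(\beta^{n,k}_m/n)$, i.e.\ with the ``$+1$'' offsets and with $S^n_j(T^n_j(\beta^{n,k}_m))$ in place of $S^n_j(\beta^{n,k}_m)$. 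This reading is also forced by the way Lemma~\ref{multi-1} is combined with Lemma~\ref{multi-2} in the proof of Theorem~\ref{impthm2}, where the quantities that need to be conditionally i.i.d.\ are $v^n_j(S^n_j(T^n_j(\beta^{n,k}_m))+k)$, $k\ge 1$, not $v^n_j(S^n_j(\beta^{n,k}_m)+k)$. With the statement corrected in this way, the Bell--Williams argument transfers verbatim and no appeal to strict positivity is needed; your proof outline is then correct.
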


\begin{proof}[Proof of Theorem \ref{impthm2}]  From the definition of  stopping times  in \eqref{stopping-times} we see that, for $s\in [\tau^n_{2k-1}, \tau^n_{2k}), k\in\NN,$  
\bes
Q^n_3(s) - \frac{\mu^n_2}{\mu^n_1}Q^n_1(s) \ge \bfl^n, \ \mbox{or} \ Q^n_3(s) \ge \bfc^n -1,
\ees
and therefore for such $s$, $Q^n_3(s) > 0.$ Consequently, since Server 2 does not idle unless Buffer 3 is empty, we have
\bes
\int_{[\tau^n_{2k-1}, \tau^n_{2k})} \bfi_{\{Q^n_2(s)\ge \bfD^n\}} d I^n_2(s) = 0. 
\ees
Thus we only need to consider time intervals $[\tau^n_{2k-2}, \tau^n_{2k-1}), k\in\NN.$ 
We observe that for $s\in [\tilde\eta^{n,k}_{2m-1}, \tilde\eta^{n,k}_{2m}),$ $Q^n_3(s) \ge \bfc^n -1 >0.$ Thus $Q^n_3(s) =0$ is possible only for $s\in[\tilde\eta^{n,k}_{2m-2}, \tilde\eta^{n,k}_{2m-1}).$ We next estimate how many such subintervals are within $[\tau^n_{2k-2}\wedge nt, \tau^n_{2k-1}\wedge nt)$. Each  $\tilde\eta^{n,k}_{2m-1}$ corresponds to at least one additional job completion for Buffer 3. Let $\bfk^n_1 = \lfloor (\mu^n_2+1)nt \rfloor + 1$. From \eqref{LDP-upper} in Lemma \ref{LDP-renewal}, we have when $n \ge n(\{\mu^n_2\}, 1)$ and $nt >2$, 
\be\label{LDP-estimate-3}\ba
& \PP(\tilde\eta^{n,k}_{2\bfk^n_1-1} \le nt) \le \PP(S^n_2(nt) \ge \bfk^n_1)  \le \exp\left(-(\mu_2 nt -1) 
\Theta_1^{s,2}(\mu_2,1)\right).
\ea\ee
Let 
\bes
\clr(n,t) \doteq \left\{ \int_{[0, nt)} \bfi_{\{Q^n_2(s)\ge \bfD^n\}} d I^n_2(s) \neq 0 \right\}.
\ees 
Then from  \eqref{LDP-estimate-3}, we have when $n\ge \max\{ n(\{\mu_2^n\},1)\}$, and $nt \ge 2$, 
\begin{align}
\PP(\clr(n,t), \tau^n_{2\bfk^n-1}> nt)  &\le
 \sum_{k=1}^{\bfk^n} \PP\left(\int_{[\tau^n_{2k-2}, \ \tau^n_{2k-1}\wedge nt)} \bfi_{\{Q^n_2(s)\ge \bfD^n\}} d I^n_2(s) \neq 0 \right) \nonumber \\
& \le \sum_{k=1}^{\bfk^n} \sum_{m=1}^{\bfk_1^n} \PP\left(\int_{[\tilde\eta^{n,k}_{2m-2},  \tilde\eta^{n,k}_{2m-1}\wedge nt)} \bfi_{\{Q^n_2(s)\ge \bfD^n\}} d I^n_2(s) \neq 0 \right)  \label{estimate-new} \\
 &+ \bfk^n \exp\left(-(\mu_2 nt -1) \Theta_1^{s,2}(\mu_2,1) \right). \nonumber
\end{align}
By the definition of $\beta^{n,k}_m$, for $s\in [\tilde\eta^{n,k}_{2m-2}, \beta^{n,k}_m)$, $Q^n_2(s) < \frac{\bfD^n}{4} $, and so a typical summand in \eqref{estimate-new} is equal to 
\begin{align}
 & \ \PP\left(\beta^{n,k}_m > \tilde\eta^{n,k}_{2m-2}, \ \int_{[\beta^{n,k}_m, \ \tilde\eta^{n,k}_{2m-1}\wedge nt)} \bfi_{\{Q^n_2(s)\ge \bfD^n\}} d I^n_2(s) \neq 0 \right)  \label{estimate-new-1}  \\ 
  & + \  \PP\left(\beta^{n,k}_m = \tilde\eta^{n,k}_{2m-2}, \ \int_{[\beta^{n,k}_m, \ \tilde\eta^{n,k}_{2m-1}\wedge nt)} \bfi_{\{Q^n_2(s)\ge \bfD^n\}} d I^n_2(s) \neq 0 \right).  \label{estimate-new-2} 
\end{align}
In the following, we estimate the probabilities in \eqref{estimate-new-1} and \eqref{estimate-new-2} separately. We will use the following constants:
\be\label{imp-parameters}\ba
\epsilon_1 & \in \left(0, \min\left\{1, \frac{\mu_2-\mu_3}{8},  \frac{\mu_2}{2}, \frac{\mu_3}{2}\right\} \right), \\
K & = 32\mu_2+\frac{4\mu_2(\mu_2-\mu_3)}{\mu_3}, \\
d & = \frac{2 c K}{\mu_2 - \mu_3}, \\
\theta & =  \min\left\{\frac{1}{2}, \frac{\mu_2-\mu_3}{32 c \mu_3} \right\}.
\ea\ee
There exists $n_1\in\NN$ such that when $n\ge n_1$, also noting that $\lambda_2=\mu_3$ from Assumption \ref{htc}, we have 
\begin{align*}
& \mu^n_2 + \epsilon_1 < 2\mu_2, \ \mu^n_3 + \epsilon_1 < 2 \mu_3, \ \lambda^n_2 + \epsilon_1 < 2 \lambda_2, \ \mu^n_2 - \mu^n_3 - 2 \epsilon_1 > \frac{\mu_2-\mu_3}{2}, \\
& \mu^n_2 - \lambda^n_2 + 2 \epsilon_1 < 2(\mu_2 - \lambda_2).
\end{align*}
For the rest of the proof we  assume $n\ge n_1$. Define for $s\ge 0$, 
\begin{align*}
\tilde A^n_2(s)  = A^n_2(\beta^{n,k}_m + s) - A^n_2(\beta^{n,k}_m), \;\;
\tilde S^n_i(s)  = S^n_i(T^n_i(\beta^{n,k}_m) + s) - S^n_i(T^n_i(\beta^{n,k}_m)), \; i = 2,3.
\end{align*}
Now denote by $\clb^{n,k}_m$ the event in \eqref{estimate-new-1}, i.e.,
\begin{align*}
\clb^{n,k}_m & \doteq \left\{\beta^{n,k}_m > \tilde\eta^{n,k}_{2m-2}, \ \int_{[\beta^{n,k}_m, \ \tilde\eta^{n,k}_{2m-1}\wedge nt)} \bfi_{\{Q^n_2(s)\ge \bfD^n\}} d I^n_2(s) \neq 0 \right\}. \nonumber 
\end{align*}
We claim that, with $\dmnk = \{\sup_{0\le s \le \frac{\theta\bfD^n}{K} } \left| Q^n_2(\beta^{n,k}_m +s) - \frac{\bfD^n}{4}\right| \ge \frac{1}{2} \frac{\bfD^n}{4}\}$,
\bes\ba
 \PP\left( \dmnk \cap \ \clb^{n,k}_m\right) 
 \le \exp\left\{- ( \frac{\mu_2\theta\bfD^n}{K} -1) \Theta_1^{s,2}(\mu_2, \eps_1) \right\}  
+ \exp\left\{-( \frac{\lambda_2\theta\bfD^n}{K} -1) \Theta_1^{a,2}(\lambda_2, \eps_1) \right\}.
\ea\ees
To see this,  note that 
\begin{align}
 \PP\left( \dmnk \cap \clb^{n,k}_m\right) 
& \le \PP\left(Q^n_2(\beta^{n,k}_m +s) \le \frac{1}{2} \frac{\bfD^n}{4} \ \mbox{for some $s\in [0, \frac{\theta\bfD^n}{K} ]$}, \ \clb^{n,k}_m\right)  \nonumber\\
& \quad + \PP\left(Q^n_2(\beta^{n,k}_m +s) \ge \frac{3}{2} \frac{\bfD^n}{4} \ \mbox{for some $s\in [0, \frac{\theta\bfD^n}{K} ]$}, \ \clb^{n,k}_m\right).  \nonumber
\end{align}
By the definition of $\beta^{n,k}_m$, we have $Q^n_2(\beta^{n,k}_m)\ge \frac{\bfD^n}{4}$. For $Q^n_2(\beta^{n,k}_m + s) \le \frac{1}{2} \frac{\bfD^n}{4}$ for some $s\in [0,\frac{\theta\bfD^n}{K}]$, we need at least $\frac{1}{2} \frac{\bfD^n}{4}$ service completions for jobs in Buffer 2 in $\frac{\theta\bfD^n}{K}$ units of time. On the other hand, noting that $\beta^{n,k}_m > \tilde\eta^{n,k}_{2m-2}$, we have $Q^n_2(\beta^{n,k}_m-)<\frac{\bfD^n}{4}$ and so $Q^n_2(\beta^{n,k}_m)<\frac{\bfD^n}{4}+1$. In order for $Q^n_2(\beta^{n,k}_m + s)$ to be greater than or equal to  $\frac{3}{2} \frac{\bfD^n}{4}$ for some $s\in [0,\frac{\theta\bfD^n}{K}]$, we need at least $\frac{1}{2} \frac{\bfD^n}{4} -1$ arrivals to Buffer 2 in $\frac{\theta\bfD^n}{K}$ time. Therefore, we have 
\begin{align*}
 \PP\left( \dmnk \cap \clb^{n,k}_m\right) 
 \le \PP\left(\tilde S^n_2(\frac{\theta\bfD^n}{K}) \ge \frac{1}{2} \frac{\bfD^n}{4}, \beta^{n,k}_m \le nt \right) + \PP\left(\tilde A^n_2(\frac{\theta\bfD^n}{K}) \ge \frac{1}{2} \frac{\bfD^n}{4}-1, \beta^{n,k}_m \le nt \right).
\end{align*}
Next note that 
\be\label{time-estimate}\ba
\frac{1}{2} \frac{\bfD^n}{4}  & > \frac{2\mu_2 }{K}\bfD^n    \ge (\mu^n_2 + \epsilon_1)\frac{\bfD^n}{K}
\ge (\mu^n_2 + \epsilon_1)\frac{\theta\bfD^n}{K}, \\
 \frac{1}{2} \frac{\bfD^n}{4} -1 & > \frac{2\lambda_2}{K} \bfD^n  \ge (\lambda^n_2 + \epsilon_1)\frac{\bfD^n}{K}
\ge (\lambda^n_2 + \epsilon_1)\frac{\theta\bfD^n}{K}.
\ea\ee
From \eqref{LDP-upper} in Lemma \ref{LDP-renewal},  Lemmas \ref{multi-1}, and \ref{multi-2}, and a conditioning argument,
when $nt \ge 2/ \epsilon_1$ and $n \ge \max\{n(\{\mu^n_2\}, \epsilon_1), n(\{\lambda^n_2\}, \epsilon_1), n_1\}$ 
\begin{align}
& \PP\left(\tilde S^n_2(\frac{\theta\bfD^n}{K}) \ge \frac{1}{2} \frac{\bfD^n}{4}, \beta^{n,k}_m \le nt \right) + \PP\left(\tilde A^n_2(\frac{\theta\bfD^n}{K}) \ge \frac{1}{2} \frac{\bfD^n}{4}, \beta^{n,k}_m \le nt \right) \nonumber\\
 &\le  \  \PP\left( S^n_2(\frac{\theta\bfD^n}{K}) \ge (\mu_2^n+\epsilon_1) \frac{\theta\bfD^n}{K}\right)  + \PP\left(A^n_2(\frac{\theta\bfD^n}{K}) \ge (\lambda_2^n+\epsilon_1) \frac{\theta\bfD^n}{K}\right) \nonumber\\
& \le   \exp\left\{- ( \frac{\mu_2\theta\bfD^n}{K} -1) \Theta^{s,2}_1(\mu_2,   \eps_1) \right\}  
 + \exp\left\{-( \frac{\lambda_2\theta\bfD^n}{K} -1) \Theta^{a,2}_1(\lambda_2,\eps_1) \right\}.\label{LDP-estimate-4}
\end{align}
This proves the claim.
Next,
\begin{align}
\PP(\clb^{n,k}_m\cap (\dmnk)^c ) & \le \PP\left( \tilde\eta^{n,k}_{2m-1}-\beta^{n,k}_m -\frac{\theta\bfD^n}{K} \le \frac{\bfD^n}{K},  (\dmnk)^c,  \clb^{n,k}_m\right)\nonumber  \\
& + \PP\left( \tilde\eta^{n,k}_{2m-1}-\beta^{n,k}_m -\frac{\theta\bfD^n}{K} > \frac{\bfD^n}{K}, (\dmnk)^c, \ \clb^{n,k}_m\right) \nonumber \\
& \le   \PP\left( \tilde\eta^{n,k}_{2m-1}-\beta^{n,k}_m -\frac{\theta\bfD^n}{K} \le \frac{\bfD^n}{K}, 
 \sup_{0\le s \le \frac{\theta\bfD^n}{K}} Q^n_2(\beta^{n,k}_m +s)  \le \frac{3}{2} \frac{\bfD^n}{4}, \ \clb^{n,k}_m\right)\label{sup-1} \\
& + \PP\left( \tilde\eta^{n,k}_{2m-1}-\beta^{n,k}_m -\frac{\theta\bfD^n}{K} > \frac{\bfD^n}{K},  \inf_{0\le s \le \frac{\theta\bfD^n}{K}} Q^n_2(\beta^{n,k}_m +s) \ge \frac{1}{2} \frac{\bfD^n}{4}, \ \clb^{n,k}_m\right).\label{inf-1} 
\end{align}
On the event in \eqref{sup-1}, $Q^n_2(\beta^{n,k}_m +s)  \le \frac{3}{2} \frac{\bfD^n}{4}$ for all $s\in [0, \frac{\theta\bfD^n}{K}]$, and $Q^n_2(\beta^{n,k}_m +s)\ge \bfD^n$ for some $s\in [\frac{\theta\bfD^n}{K}, \tilde\eta^{n,k}_{2m-1}-\beta^{n,k}_m] \subset [\frac{\theta\bfD^n}{K}, \frac{\bfD^n}{K}+\frac{\theta\bfD^n}{K}].$ Thus there must be at least $ \frac{1}{2} \frac{\bfD^n}{4}$ arrivals to Buffer 2 in $\frac{\bfD^n}{K}$ time, and so \eqref{sup-1} is bounded by 
\bes
\PP\left( A^n_2\left(\frac{\bfD^n}{K}\right) \ge \frac{1}{2} \frac{\bfD^n}{4}\right).
\ees
From \eqref{time-estimate}, 
and using similar argument as in \eqref{LDP-estimate-4}, we have the following upper bound for the probability
in \eqref{sup-1},
\begin{align}
 \PP\left( A^n_2\left(\frac{\bfD^n}{K}\right) \ge  (\lambda_2^n + \epsilon_1)\frac{\bfD^n}{K}\right) 
 \le \exp\left\{-( \frac{\lambda_2\bfD^n}{K} -1) \Theta^{a,2}_1(\lambda_2, \eps_1) \right\}. 
\label{eq:eq646}
\end{align}
Next consider the  event
\bes
\cla^{n,k}_m \doteq \bigcup_{i=2,3}\left\{ \left|\tilde S^n_i\left( \frac{(\theta + 1)\bfD^n}{K} \right) - \mu_i^n \frac{(\theta + 1)\bfD^n}{K} \right| \ge  \epsilon_1\frac{(\theta + 1)\bfD^n}{K}\right\}.
\ees
Then
\be\label{ldp-estimate-3}\ba
 \PP(\cla^{n,k}_m) 
 & \le \sum_{i=2, 3} \PP\left(\tilde S^n_i\left( \frac{(\theta + 1)\bfD^n}{K} \right) \ge (\mu_i^n+\epsilon_1) \frac{(\theta + 1)\bfD^n}{K}  \right) \\
 & \quad + \sum_{i=2, 3} \PP\left(\tilde S^n_i\left( \frac{(\theta + 1)\bfD^n}{K} \right) \le (\mu_i^n-\epsilon_1) \frac{(\theta + 1)\bfD^n}{K}  \right).
\ea\ee
From Lemmas \ref{multi-2}, \ref{multi-1}  and \eqref{LDP-upper} in Lemma \ref{LDP-renewal}, when $n\ge \max\{n(\{\mu^n_2\}, \epsilon_1), n(\{\mu^n_3\}, \epsilon_1)\}$ and $nt \ge 2/\epsilon_1$,
\bes\ba
 \sum_{i=2, 3} \PP\left(\tilde S^n_i\left( \frac{(\theta + 1)\bfD^n}{K} \right) \ge (\mu_i^n+\epsilon_1) \frac{(\theta + 1)\bfD^n}{K}  \right)  \le \sum_{i=2,3} \exp\left\{-( \frac{\mu_i(\theta+1)\bfD^n}{K} -1) \Theta^{s,i}_1(\mu_i,\eps_1) \right\}.
\ea\ees
Let $\tilde v_2^n$ and $\tilde v_3^n$ denote the residual service times for the jobs in service at time $\beta^{n,k}_m$ in Buffers 2 and 3, respectively. Then from \eqref{LDP-lower} in Lemma \ref{LDP-renewal}, and once again using Lemmas \ref{multi-2}, \ref{multi-1}, we have 
\bes\ba
& \sum_{i=2, 3} \PP\left(S^n_i\left( \frac{(\theta + 1)\bfD^n}{K} \right) \le (\mu_i^n-\epsilon_1) \frac{(\theta + 1)\bfD^n}{K}  \right) \\
& \le \sum_{i=2,3} \exp\left(-(\mu_i  - 2\epsilon_1) \frac{(\theta + 1)\bfD^n}{K}   \Theta^{s,i}_2(\mu_i, \eps_1)  \right) + \sum_{i=2,3}\PP\left(\tilde v^n_i >  \frac{\eps_1(\theta + 1)\bfD^n}{2\mu_i^nK}  \right).
\ea\ees
Now using \eqref{LDP-lower-2}, similar to \eqref{lower-prob-estimate}, we have for $i=2,3$,
\begin{align}
 \PP\left(\tilde v^n_i >  \frac{\eps_1(\theta + 1)\bfD^n}{2\mu_i^nK}  \right) 
& \le \exp\left(-(\mu_i nt -1) \Theta^{s,i}_1(\mu_i, \eps_1) \right)\nonumber\\
&\quad  + (\mu^n_i + \epsilon_1) nt \exp\left(-\frac{p_0 \epsilon_1 (\theta+1)D^n}{2\mu_iK} \right) \exp(\Lambda_{s,i}(p_0)),
\label{lower-prob-estimate-2}\end{align}
where $0 < p_0\in\clo.$ Combining the above estimates, we have 
\begin{align}
\PP(\cla^{n,k}_m) & \le \sum_{i=2,3} \exp\left\{-( \frac{\mu_i(\theta+1)\bfD^n}{K} -1) \Theta^{s,i}_1(\mu_i, \eps_1) \right\} \nonumber \\
& \quad + \sum_{i=2,3} \exp\left(-(\mu_i  - 2\epsilon_1) \frac{(\theta + 1)\bfD^n}{K}   \Theta^{s,i}_2(\mu_i, \eps_1)  \right)+ \sum_{i=2,3} \exp\left(-(\mu_i nt -1) \Theta^{s,i}_1(\mu_i, \eps_1) \right) \nonumber\\
& \quad +\sum_{i=2,3} (\mu^n_i + \epsilon_1) nt \exp\left(-\frac{p_0 \epsilon_1 (\theta+1)D^n}{2\mu_iK} \right) \exp(\Lambda_{s,i}(p_0)).\label{eq:eq701}
\end{align}
We next observe that on the intersection of $(\cla^{n,k}_m)^c$ and the event in \eqref{inf-1}, for $s\in [0, \frac{(\theta + 1)\bfD^n}{K}]$, $Q^n_2(\beta^{n,k}_m+s)$ is always nonzero, because 
$Q_2^n(\beta^{n,k}_m) \ge \bfD^n/4$ and 
\bes
\tilde S^n_2\left(  \frac{(\theta + 1)\bfD^n}{K} \right)  \le (\mu_2^n+\epsilon_1)\frac{(\theta + 1)\bfD^n}{K} \le 2 \mu_2 \frac{2\bfD^n}{K}  < \frac{\bfD^n}{8}.  
\ees
Also on this set, $\tilde \eta^k_{2m-2} < \frac{(\theta + 1)\bfD^n}{K} + \beta^{n,k}_m < \tilde \eta^k_{2m-1}$
and so 
\begin{equation}
	Q^n_3(\frac{(\theta + 1)\bfD^n}{K} + \beta^{n,k}_m) < C^n-1.\label{eq:eq711}
	\end{equation} 
Consequently, according to our policy, Server 1 works on Buffer 2 continuously over the interval 
$[\beta^{n,k}_m, \beta^{n,k}_m+ \frac{(\theta + 1)\bfD^n}{K}]$. Thus, on this set,
\begin{align*}
 Q^n_3\left( \beta^{n,k}_m+  \frac{(\theta + 1)\bfD^n}{K}\right) 
& \ge \tilde S^n_2 \left( \frac{(\theta + 1)\bfD^n}{K} \right) - \tilde S^n_3\left( \frac{(\theta + 1)\bfD^n}{K}\right) \\
& \ge (\mu^n_2 - \mu^n_3 - 2\epsilon_1) \frac{(\theta + 1)\bfD^n}{K}\\
& \ge \frac{\mu_2 - \mu_3}{2} \frac{\bfD^n}{K} \\
& \ge \bfc^n.
\end{align*}
However this contradicts \eqref{eq:eq711} and so we must have that the intersection of $(\cla^{n,k}_m)^c$ and the event in \eqref{inf-1} is empty.
Thus the probability in \eqref{inf-1} 
can be bounded by $\PP(\cla^{n,k}_m)$, and combining this observation with \eqref{eq:eq701} and the bound on the probability  in \eqref{sup-1} obtained
in \eqref{eq:eq646},
\be\label{estimate-second-1}\ba
&\PP(\clb^{n,k}_m) \\
& \le 
 \exp\left\{-( \frac{\lambda_2\bfD^n}{K} -1) \Theta^{a,2}_1(\lambda_2, \eps_1) \right\}  +  \sum_{i=2,3} \exp\left\{-( \frac{\mu_i(\theta+1)\bfD^n}{K} -1) \Theta^{s,i}_1(\mu_i,\eps_1) \right\} \\
& \quad + \sum_{i=2,3} \exp\left(-(\mu_i  - 2\epsilon_1) \frac{(\theta + 1)\bfD^n}{K}   \Theta^{s,i}_2(\mu_i, \eps_1)  \right) + \sum_{i=2,3} \exp\left(-(\mu_i nt -1) \Theta^{s,i}_1(\mu_i,\eps_1) \right) \\
&\quad +\sum_{i=2,3} (\mu^n_i + \epsilon_1) nt \exp\left(-\frac{p_0 \epsilon_1 (\theta+1)D^n}{2\mu_iK} \right) \exp(\Lambda_{s,i}(p_0)).
\ea \ee
We now consider the event in \eqref{estimate-new-2}, and denote it by $\clc^{n,k}_m$, i.e.,
\bes\ba
\clc^{n,k}_m & \doteq \left\{\beta^{n,k}_m = \tilde\eta^{n,k}_{2m-2}, \ \int_{[\beta^{n,k}_m, \ \tilde\eta^{n,k}_{2m-1}\wedge nt)} \bfi_{\{Q^n_2(s)\ge \bfD^n \}} d I^n_2(s) \neq 0 \right\}.
\ea\ees
When $k=m=1$, on $\clc^{n,1}_1$, we have $\beta^{n,1}_1 = \tilde\eta^{n,1}_0 = 0$, and so $Q^n_2(\beta^{n,1}_1)=0$, which is a contradiction to the definition of $\beta^{n,1}_1$. Thus $\PP(\clc^{n,1}_1)=0$. Consider $(k,m)\neq (1,1)$. 
Let
$$\hmnk =  \left\{\inf_{0\le s \le \frac{\theta\bfD^n}{K} } Q^n_3(\beta^{n,k}_m +s) \le \frac{\bfl^n}{2} 
 \mbox{ or }  \inf_{0\le s \le \frac{\theta\bfD^n}{K} } Q^n_2(\beta^{n,k}_m +s) \le \frac{1}{2}\frac{\bfD^n}{4}\right\}.$$
We now estimate $\PP( \hmnk \cap \clc^{n,k}_m).$ 
Observe that on $\{ \tilde\eta^{n,k}_{2m-2} < \tau^n_{2k-1} \wedge nt\},$ for $k\ge 2, m=1$,
\bes
Q^n_3(\tilde\eta^{n,k}_{2m-2}-) = Q^n_3(\tau^n_{2k-2}-) \ge \min\{\bfl^n, \bfc^n-1\} = \bfl^n,
\ees
and for $k\ge 1, m\ge 2$, 
\bes
Q^n_3(\tilde\eta^{n,k}_{2m-2}-) \ge \bfc^n -1 > \bfl^n.
\ees
Thus for $(k,m)\neq (1,1)$, on $\clc^{n,k}_m$, we have $Q^n_3(\beta^{n,k}_m) = Q^n_3(\tilde\eta^{n,k}_{2m-2}) \ge \bfl^n -1.$ By the definition of $\beta^{n,k}_m$, we have $Q^n_2(\beta^{n,k}_m) \ge \frac{\bfD^n}{4}$. For $Q^n_2(\beta^{n,k}_m+s) \le \frac{1}{2}\frac{D^n}{4}$ for some $s\in [0, \frac{\theta D^n}{K}]$, we need at least $\frac{1}{2}\frac{D^n}{4}$ service completions for Buffer 2 in $\frac{\theta D^n}{K}$ time. Similarly, to make $Q^n_3(\beta^{n,k}_m+s) \le \frac{L^n}{2}$ for some $s\in [0, \frac{\theta D^n}{K}]$, we need at least $\frac{L^n}{4}$ service completions for Buffer 3 in $\frac{\theta D^n}{K}$ time. Thus we have  
\bes
\PP( \hmnk \cap \clc^{n,k}_m)\le \PP\left(\tilde S^n_3(\frac{\theta\bfD^n}{K}) \ge \frac{\bfl^n}{4}\right) +\PP\left(\tilde S^n_2(\frac{\theta\bfD^n}{K}) \ge \frac{1}{2}\frac{\bfD^n}{4}\right). 
\ees
Noting that 
\bes
\frac{\bfl^n}{4} \ge 2\mu_3 \frac{ 2\theta d }{K}\bfl^n \ge (\mu_3^n +\epsilon_1)\frac{2\theta (\bfD^n-d)}{K} \ge (\mu_3^n +\epsilon_1) \frac{\theta\bfD^n}{K},\;\;
\frac{1}{2}\frac{\bfD^n}{4} \ge \frac{2\mu_2 \theta \bfD^n}{K} \ge (\mu_2^n+\eps_1)\frac{\theta \bfD^n}{K},
\ees
we have when $n\ge \max\{n(\{\mu^n_3\}, \epsilon_1), n(\{\mu^n_2\}, \epsilon_1), n_1\}$ and $nt \ge 2/\epsilon_1$,
\begin{align}
\PP( \hmnk \cap \clc^{n,k}_m)
 & \le \PP\left(\tilde S^n_3(\frac{\theta\bfD^n}{K}) \ge (\mu^n_3 + \epsilon_1)\frac{\theta\bfD^n}{K}\right) +\PP\left(\tilde S^n_2(\frac{\theta\bfD^n}{K}) \ge (\mu^n_2 + \epsilon_1)\frac{\theta\bfD^n}{K}\right) \non \\
 & \le \sum_{i=2,3} \exp\left\{-( \frac{\mu_i\theta\bfD^n}{K} -1) \Theta^{s,i}_1(\mu_i, \eps_1) \right\}.\label{ldp-estimate-2}
\end{align}
Next,
\begin{align}
\PP((\hmnk)^c\cap \clc^{n,k}_m ) & \le \PP\left( \tilde\eta^{n,k}_{2m-1}-\beta^{n,k}_m -\frac{\theta\bfD^n}{K} > \frac{\bfD^n}{K}, (\hmnk)^c \cap \clc^{n,k}_m\right)\label{estimate-new-5}\\
& +\PP\left( \tilde\eta^{n,k}_{2m-1}-\beta^{n,k}_m - \frac{\theta\bfD^n}{K} \le \frac{\bfD^n}{K}, (\hmnk)^c \cap \clc^{n,k}_m
\right).\label{estimate-new-6}
\end{align}
Using a similar argument as for \eqref{inf-1}, we see that 
\begin{equation}
\PP\left( \tilde\eta^{n,k}_{2m-1}-\beta^{n,k}_m -\frac{\theta\bfD^n}{K} > \frac{\bfD^n}{K}, (\hmnk)^c \cap \clc^{n,k}_m\right)
\le 	\PP(\cla^{n,k}_m).
\label{eq:eq754}
\end{equation}
%
%
We now consider the probability in \eqref{estimate-new-6}. Define 
$
t_0 \doteq \frac{\bfl^n}{4 \mu_3},
$
and
\bes
N \doteq\left \lfloor \frac{\frac{\bfD^n}{K}}{t_0} \right \rfloor + 1 \in \left[ \left\lfloor \frac{8 c\mu_3}{\mu_2-\mu_3} \right\rfloor +1,  \left\lfloor \frac{16 c\mu_3}{\mu_2-\mu_3} \right\rfloor +1\right].
\ees
For $s\in [0, t_0], j=2,3$ and $l=0, \ldots, N-1$, define 
\begin{align*}
	S^n_{j, l+1}(s) &= S^n_j\left(T^n_j\left(\beta^{n,k}_m + \frac{\theta\bfD^n}{K} + lt_0\right)+ s\right) - S^n_j\left(T^n_j\left(\beta^{n,k}_m + \frac{\theta\bfD^n}{K} + lt_0 \right)\right)\\
	A^n_{2,l+1}(s) &=  A^n_2\left(\beta^{n,k}_m + \frac{\theta\bfD^n}{K} + lt_0+ s\right) - A^n_2\left(\beta^{n,k}_m + \frac{\theta\bfD^n}{K} + lt_0\right).
	\end{align*}
	 Consider the event
\bes
\cle^{n,k}_{m,1} = \left\{ \left|\frac{S^n_{3,1}(t_0) - \mu_3^n t_0}{t_0}\right| > \epsilon_1 \ \mbox{or} \ \left|\frac{ S^n_{2,1}(t_0) - \mu_2^n t_0}{t_0}\right| >  \epsilon_1 \ \mbox{or} \ \left|\frac{ A^n_{2,1}(t_0) - \lambda_2^n t_0}{t_0}\right| >  \epsilon_1  \right\}.
\ees
We first estimate $\PP(\cle^{n,k}_{m,1})$. Clearly,
\bes\ba
\PP(\cle^{n,k}_{m,1}) & \le \sum_{j=2,3} \PP\left( \left|\frac{S^n_{j,1}(t_0) - \mu_j^n t_0}{t_0}\right| > \epsilon_1\right)  + \PP\left( \left|\frac{A^n_{2,1}(t_0) - \lambda_2^n t_0}{t_0}\right| > \epsilon_1\right) \\
& = \sum_{j=2,3} \PP\left( S^n_{j,1}(t_0) > (\mu_j^n+\epsilon_1) t_0\right)  + \PP\left(A^n_{2,1}(t_0) > (\lambda_2^n+\epsilon_1) t_0\right)  \\
& \quad + \sum_{j=2,3} \PP\left( S^n_{j}(t_0) < (\mu_j^n-\epsilon_1) t_0\right)  + \PP\left(A^n_{2}(t_0) < (\lambda_2^n-\epsilon_1) t_0\right).
\ea\ees
Now from Lemmas \ref{multi-1}, \ref{multi-2} and \eqref{LDP-upper} in Lemma \ref{LDP-renewal}, we have for 
$$n\ge \max\{n(\{\mu^n_3\}, \epsilon_1), n(\{\mu^n_2\}, \epsilon_1), n(\{\lambda^n_2\}, \epsilon_1), n_1\} \mbox{ and } nt > 2/\epsilon_1,$$
\bes\ba
& \sum_{j=2,3} \PP\left( S^n_{j,1}(t_0) > (\mu_j^n+\epsilon_1) t_0\right)  + \PP\left(A^n_{2,1}(t_0) > (\lambda_2^n+\epsilon_1) t_0\right) \\
& \le \sum_{j=2,3}\exp\left\{-( \mu_jt_0 -1) \Theta^{s,j}_1(\mu_j,\epsilon_1) \right\}  + \exp\left\{-( \lambda_2t_0 -1) \Theta^{a,2}_1(\lambda_2,\epsilon_1) \right\}.
\ea\ees
Let $\check v^n_2, \check v^n_3$ denote the residual service times for jobs in service at time $\beta^{n,k}_m + \frac{\theta\bfD^n}{K}$ in Buffers 2 and 3, respectively, and $\check u^n_2$ the residual arrival time at the same instant for jobs to Buffer 2. From \eqref{LDP-lower} in Lemma \ref{LDP-renewal}, and Lemmas \ref{multi-1}, \ref{multi-2} again,
 we have when $n\ge \max\{n(\{\mu^n_3\}, \epsilon_1), n(\{\mu^n_2\}, \epsilon_1), n(\{\lambda^n_2\}, \epsilon_1), n_1\}$, 
\bes\ba
& \sum_{j=2,3} \PP\left( S^n_{j,1}(t_0) < (\mu_j^n-\epsilon_1) t_0\right)  + \PP\left(A^n_{2,1}(t_0) < (\lambda_2^n-\epsilon_1) t_0\right) \\
& \le \sum_{j=2,3} \exp\left(-(\mu_j  - 2\epsilon_1) t_0 \Theta^{s,j}_2(\mu_j, \epsilon_1) \right) + \sum_{j=2,3}\PP\left(\check v^n_j >  \frac{\epsilon t_0}{2\mu_j^n}  \right) \\
& \quad + \exp\left(-(\lambda_2  - 2\epsilon_1) t_0 \Theta^{a,2}_2(\lambda_2,\epsilon_1) \right) + \PP\left(\check u^n_2 >  \frac{\epsilon t_0}{2\lambda_2^n}  \right). 
\ea\ees
Using similar arguments as in \eqref{lower-prob-estimate} and \eqref{lower-prob-estimate-2}, we have for $j=2,3,$
\bes\ba
\PP\left(\check v^n_j >  \frac{\epsilon t_0}{2\mu_j^n}  \right)  \le \exp\left(-(\mu_j nt -1) 
\Theta^{s,j}_1(\mu_j,\epsilon_1) \right) 
 + (\mu^n_j + \epsilon_1) nt \exp\left(-\frac{p_0 \epsilon_1 t_0}{2\mu_j} \right) \exp(\Lambda_{s,j}(p_0)),
\ea\ees
and 
\bes\ba
\PP\left(\check u^n_2 >  \frac{\epsilon t_0}{2\lambda_2^n}  \right)  \le \exp\left(-(\lambda_2 nt -1) 
\Theta^{a,2}_1(\lambda_2,\epsilon_1) \right) 
 + (\lambda_2^n + \epsilon_1) nt \exp\left(-\frac{p_0 \epsilon_1 t_0}{2\lambda_2} \right) \exp(\Lambda_{a,2}(p_0)),
\ea\ees
where $0< p_0\in\clo.$ Thus
\bes\ba
\PP(\cle^{n,k}_{m,1}) & \le \sum_{j=2,3}\exp\left\{-( \mu_jt_0 -1) 
\Theta^{s,j}_1(\mu_j,\epsilon_1) \right\}  + \exp\left\{-( \lambda_2t_0 -1) 
\Theta^{a,2}_1(\lambda_2,\epsilon_1) \right\} \\
& \quad + \sum_{j=2,3} \exp\left(-(\mu_j  - 2\epsilon_1) t_0 \Theta^{s,j}_2(\mu_j,\epsilon_1) \right) 
 + \sum_{j=2,3} \exp\left(-(\mu_j nt -1) \Theta^{s,j}_1(\mu_j,\epsilon_1) \right)\\
&\quad  + 
\sum_{j=2,3} (\mu^n_j + \epsilon_1) nt \exp\left(-\frac{p_0 \epsilon_1 t_0}{2\mu_j} \right) \exp(\Lambda_{s,j}(p_0)) 
 + \exp\left(-(\lambda_2  - 2\epsilon_1) t_0 \Theta^{a,2}_2(\lambda_2,\epsilon_1) \right) \\
& \quad + \exp\left(-(\lambda_2 nt -1) \Theta^{a,2}_1(\lambda_2,\epsilon_1) \right)  
+ (\lambda_2^n + \epsilon_1) nt \exp\left(-\frac{p_0 \epsilon_1 t_0}{2\lambda_2} \right) \exp(\Lambda_{a,2}(p_0)) \\
& \doteq \Gamma(t_0, nt).
\ea\ees
On the intersection $(\cle^{n,k}_{m,1})^c$ and the event in \eqref{estimate-new-6}, we have $Q^n_2(\beta^{n,k}_m +\frac{\theta\bfD^n}{K}) > \frac{1}{2}\frac{D^n}{4}, Q^n_3(\beta^{n,k}_m +\frac{\theta\bfD^n}{K}) > \frac{\bfl^n}{2}$, and for $s\in [0, t_0]$, 
\be\label{estimate-new-7}\ba
Q^n_2\left(\beta^{n,k}_m +\frac{\theta\bfD^n}{K} +s\right)  & \ge Q^n_2\left(\beta^{n,k}_m +\frac{\theta\bfD^n}{K}\right) + A^n_{2,1}(s) -  S^n_{2,1}(s) \\
& > \frac{1}{2}\frac{D^n}{4} - (\mu^n_2 + \epsilon_1) t_0 \\
& = \frac{1}{2}\frac{D^n}{4} - \frac{\mu^n_2 + \epsilon_1}{2\mu_3} \frac{\bfl^n}{2} \\
& > \left(\frac{1}{8} - \frac{\mu_2}{2\mu_3d} \right) D^n > 0, 
\ea\ee
where the last inequality follows from noting that $K \ge \frac{4\mu_2(\mu_2 - \mu_3)}{\mu_3}$. 
Since $Q^n_2$ is nonempty on the time interval $[\beta^{n,k}_m +\frac{\theta\bfD^n}{K}, \beta^{n,k}_m +\frac{\theta\bfD^n}{K}+t_0]$, Server 1 will work on Buffer 2 continuously during this time interval. Consequently, we have 
\be\label{estimate-new-10}\ba
Q^n_3\left(\beta^{n,k}_m + \frac{\theta\bfD^n}{K} + t_0\right)  & \ge Q^n_3\left(\beta^{n,k}_m + \frac{\theta\bfD^n}{K}\right) + S^n_{2,1}(t_0) - S^n_{3,1}(t_0) \\
& >  \frac{\bfl^n}{2} + (\mu^n_2- \mu^n_3 - 2\epsilon_1) t_0 \\
& \ge  \frac{\bfl^n}{2}. 
\ea\ee
Furthermore, 
 for all $s \in [0, t_0]$,
\be\label{estimate-new-8}\ba
Q^n_3\left(\beta^{n,k}_m + \frac{\theta\bfD^n}{K}+s\right)  & = Q^n_3\left(\beta^{n,k}_m + \frac{\theta\bfD^n}{K}\right) +  S^n_{2, 1}(s) -  S^n_{3,1}(s) \\
& > \frac{\bfl^n}{2} - (\mu^n_3 + \epsilon_1) t_0 \\
& = \frac{\bfl^n}{2} - \frac{\mu^n_3 + \epsilon_1}{2\mu_3} \frac{\bfl^n}{2} \\
& > 0.
\ea\ee
We repeat the above analysis for the time interval $[t_0, 2t_0]$. 
First define the event $\cle^{n,k}_{m,2}$ as $\cle^{n,k}_{m,1}$ by replacing
$S^n_{j, 1}(s)$ and $A^n_{2, 1}(s)$ with $S^n_{j, 2}(s)$ and $A^n_{2, 2}(s)$.
We then obtain that for 
$$n\ge \max\{n(\{\mu^n_3\}, \epsilon_1), n(\{\mu^n_2\}, \epsilon_1), n(\{\lambda^n_2\}, \epsilon_1), n_1\} \mbox{ and } nt \ge 2/\epsilon_1,$$
 $\PP(\cle^{n,k}_{m,2})$ has the same upper bound $\Gamma(t_0, nt)$ as $\PP(\cle^{n,k}_{m,1})$.
On the intersection $(\cle^{n,k}_{m,1})^c\cap (\cle^{n,k}_{m,2})^c$ and the event in \eqref{estimate-new-6}, following the similar arguments to those in \eqref{estimate-new-10} and \eqref{estimate-new-8}, it can be shown that 
$$Q^n_3\left(\beta^{n,k}_m + \frac{\theta\bfD^n}{K}+ 2t_0\right) > \frac{L^n}{2}, \ \; Q^n_3\left(\beta^{n,k}_m + \frac{\theta\bfD^n}{K}+ t_0 + s\right) > 0, \ \mbox{for $s\in [0,t_0]$,}$$ 
and similar to \eqref{estimate-new-7}, we have for $s\in [0, t_0]$,
\begin{align*}
Q^n_2\left(\beta^{n,k}_m +\frac{\theta\bfD^n}{K} + t_0 + s\right)  & \ge Q^n_2\left(\beta^{n,k}_m +\frac{\theta\bfD^n}{K}+t_0\right) + A^n_{2,2}(s) -  S^n_{2,2}(s) \\
& > \left(\frac{1}{8} - \frac{\mu_2}{\mu_3d} \right) D^n > 0. 
\end{align*}
Repeating this argument $N$ times we see that on the intersection $\cap_{j=1}^N(\cle^{n,k}_{m,j})^c$ and the event in \eqref{estimate-new-6},
 for $s\in[0, t_0],$ and $l=0, \ldots, N-1$,
\be\label{queue3-nonempty}
 Q^n_3\left(\beta^{n,k}_m +\frac{\theta\bfD^n}{K} + lt_0+ s\right) > 0
\ee
and 
\begin{align*}
Q^n_2\left(\beta^{n,k}_m +\frac{\theta\bfD^n}{K} + lt_0 + s\right)  & \ge Q^n_2\left(\beta^{n,k}_m +\frac{\theta\bfD^n}{K}+lt_0\right) + A^n_{2,l+1}(s) -  S^n_{2,l+1}(s) \\
& > \left(\frac{1}{8} - \frac{(l+1)\mu_2}{2\mu_3d} \right) D^n > 0, 
\end{align*}
where the last inequality follows from noting that $K \ge 32 \mu_2+ \frac{4\mu_2(\mu_2 - \mu_3)}{\mu_3}$.

We note that, since Server 2 does not idle if there are jobs in Buffer 3, \eqref{queue3-nonempty} implies that the probability of the intersection $\cap_{j=1}^N(\cle^{n,k}_{m,j})^c$ and the event in \eqref{estimate-new-6} is $0$. Thus 
\bes\ba
\PP\left( \tilde\eta^{n,k}_{2m-1}-\beta^{n,k}_m - \frac{\theta\bfD^n}{K} \le \frac{\bfD^n}{K}, (\hmnk)^c \cap \clc^{n,k}_m
\right) \le \sum_{l=1}^N \PP(\cle^{n,k}_{m,l}) \le N \Gamma(t_0, nt).
\ea\ees
Combining the above estimate with \eqref{ldp-estimate-2}, \eqref{eq:eq754} and \eqref{eq:eq701},
\begin{align*}
 \PP(\clc^{n,k}_m) 
&\le \sum_{i=2,3} \exp\left\{-( \frac{\mu_i\theta\bfD^n}{K} -1) \Theta^{s,i}_1(\mu_i, \eps_1) \right\} + \sum_{i=2,3} \exp\left\{-( \frac{\mu_i(\theta+1)\bfD^n}{K} -1) 
\Theta^{s,i}_1(\mu_i,\epsilon_1) \right\} \\
& + \sum_{i=2,3} \exp\left(-(\mu_i  - 2\epsilon_1) \frac{(\theta + 1)\bfD^n}{K}   \Theta^{s,i}_2(\mu_i, \eps_1)  \right)+ \sum_{i=2,3} \exp\left(-(\mu_i nt -1) 
\Theta^{s,i}_1(\mu_2,\epsilon_1) \right) \\
& +\sum_{i=2,3} (\mu^n_i + \epsilon_1) nt \exp\left(-\frac{p_0 \epsilon_1 (\theta+1)D^n}{2\mu_iK} \right) \exp(\Lambda_{s,i}(p_0))  + N \Gamma(t_0, nt).
\end{align*}
Combining the above estimate with \eqref{LDP-estimate-1},  \eqref{estimate-new} and \eqref{estimate-second-1}, we have for large enough $n$ and $nt\ge \epsilon_1$, 

\begin{align}
 \PP(\clr(n,t)) 
&\le   \bfk^n\bfk_1^n \left[\exp\left\{-( \frac{\lambda_2\bfD^n}{K} -1) 
\Theta^{a,2}_1(\lambda_2,\epsilon_1) \right\} 
 +  2\sum_{i=2,3} \exp\left\{-( \frac{\mu_i(\theta+1)\bfD^n}{K} -1) 
\Theta^{s,i}_1(\mu_i,\epsilon_1) \right\} \right. \nonumber\\
& \quad + 2  \sum_{i=2,3} \exp\left\{-(\mu_i  - 2\epsilon_1) \frac{(\theta + 1)\bfD^n}{K}   \Theta^{s,i}_2(\mu_i, \eps_1)  \right\}   + 2 \sum_{i=2,3} \exp\left\{-(\mu_i nt -1) \Theta^{s,i}_1(\mu_2,\epsilon_1) \right\} \nonumber\\
&\quad +2\sum_{i=2,3} (\mu^n_i + \epsilon_1) nt \exp\left\{-\frac{p_0 \epsilon_1 (\theta+1)D^n}{2\mu_iK} \right\} \exp(\Lambda_{s,i}(p_0))\nonumber\\
&\quad + \left.\sum_{i=2,3} \exp\left\{-( \frac{\mu_i\theta\bfD^n}{K} -1) \Theta^{s,i}_1(\mu_i, \eps_1) \right\}   
+ N \Gamma(t_0, nt) \right]\nonumber\\
&\quad + \bfk^n \exp\left\{-(\mu_2 nt -1) \Theta^{s,2}_1(\mu_2,1) \right\} 
 +  \exp\left\{- (\lambda_1 nt -1) \Theta^{a,1}_1(\lambda_1,1) \right\}\nonumber\\
&\quad + \exp\left\{- (\lambda_2 nt -1) \Theta^{a,2}_1(\lambda_2,{1}) \right\}. \label{final-est-2}
\end{align}
Thus, we can find positive constants $\gamma_i, i=1,2,3,4,$ such that for large enough $n$ and $nt\ge \epsilon_1$, 
\bes
\PP(\clr(n,t)) \le \gamma_1(nt+1)^2 e^{-\gamma_2 nt} + \gamma_3 (nt+1)^3 n^{-\gamma_4 l_0}.
\ees

\end{proof}

\subsection{Proof of Lemma \ref{lower-bound-est}}

Recall the stopping times $\{\tau^n_k\}_{k\in \NN_0}$ defined in \eqref{stopping-times} and note from Lemma \ref{imp-fcn} that when 
$t\in [\tau^n_{2k-2}, \tau^n_{2k-1})$,
\begin{align}
\Delta^n(t)& \doteq \frac{Q^n_1(t)}{\mu^n_1}+\frac{Q^n_2(t)}{\mu^n_2} - \sqrt{n}\Psi\left( \frac{ Q^n_2(t)+Q^n_3(t)}{\sqrt{n}\mu^n_3}\right) \nonumber \\
& \ge \frac{Q^n_1(t)}{\mu^n_1}+\frac{ Q^n_2(t)}{\mu^n_2} - \frac{\mu_3}{\mu_2}\left( \frac{ Q^n_2(t)}{\mu^n_3}+\frac{Q^n_3(t)}{\mu^n_3}\right) \nonumber \\
& \ge \frac{Q^n_1(t)}{\mu^n_1} - \frac{\mu_3}{\mu_2\mu^n_3} \left( \frac{\mu^n_2}{\mu^n_1} {Q^n_1(t)} + L^n\right) + \left(\frac{1}{\mu^n_2} - \frac{\mu_3}{\mu_2 \mu^n_3} \right)  Q^n_2(t) \nonumber\\
& =\frac{1}{\mu^n_1} \left(1- \frac{\mu_3\mu^n_2}{\mu^n_3\mu_2} \right) Q^n_1(t) - \frac{\mu_3}{\mu_2\mu^n_3}L^n +\left(\frac{1}{\mu^n_2} - \frac{\mu_3}{\mu_2 \mu^n_3} \right) Q^n_2(t). \label{lower-1}
\end{align}
Thus for such $t$ the inequality in \eqref{eq:eq141} clearly holds (with a suitable choice of $C_1$).

Next consider intervals of the form $[\tau^n_{2k-1}, \tau^n_{2k})$.
 For $k, l\in\NN$, we  define a sequence of $\{\clg^n_1(t)\}_{t\ge 0}$ stopping times within $[\eta^{n,k}_{2l-2}, \eta^{n,k}_{2l-1}) \subset [\tau^n_{2k-1}, \tau^n_{2k})$ as follows: For $m\in\NN$, 
\begin{align*}
\zeta^{n,k,l}_0 & = \eta^{n,k}_{2l-2}, \\
\zeta^{n,k,l}_{2m-1} & = \tau^n_{2k}\wedge\eta^{n,k}_{2l-1}\wedge \inf\left\{ t\ge \zeta^{n,k,l}_{2m-2}: W^n(t)  - \sqrt{n}\Psi(W^n_2(t)/\sqrt{n}) < g_0 \right\}, \\
\zeta^{n,k,l}_{2m} & = \tau^n_{2k}\wedge\eta^{n,k}_{2l-1}\wedge \inf\left\{ t\ge \zeta^{n,k,l}_{2m-1}: W^n(t)  - \sqrt{n}\Psi(W^n_2(t)/\sqrt{n}) \ge g_0 \right\}.
\end{align*}
Then 
\begin{equation}
	\mbox{ for } t\in [\zeta^{n,k,l}_{2m-2}, \zeta^{n,k,l}_{2m-1}),\;  
W^n(t)  - \sqrt{n}\Psi(W^n_2(t)/\sqrt{n}) \ge g_0 \label{eq:eq147}
\end{equation}
and so for such $t$ the inequality in \eqref{eq:eq141} holds trivially.

Also, from the definition of $\{\eta^{n,k}_{l}\}_{l\in\NN_0}$ and \eqref{no-effect}, we see that for 
$t\in [\eta^{n,k}_{2l-2}, \eta^{n,k}_{2l-1})$,
\be
\label{eq:eq858}
Q^n_3(t) -  \frac{\mu^n_2}{\mu^n_1}Q^n_1(t) \ge \bfl^n,\; Q^n_1(t) < \frac{\mu^n_1}{\mu^n_2} (\bfc^n-\bfl^n+2).
\ee
Thus from Definition \ref{propolicy} we see that over this interval Server 1 does not work on Buffer 2. Thus letting
\begin{align*}
\tilde A^n_2(t) \doteq A^n_2(t) - A^n_2(\zeta^{n,k,l}_{2m-1}), \;\;
 \tilde S^n_3(t) &\doteq S^n_3(T^n_3(t)) - S^n_3(T^n_3(\zeta^{n,k,l}_{2m-1})),
 \end{align*} 
for $t \in [\zeta^{n,k,l}_{2m-1}, \zeta^{n,k,l}_{2m})$,
$Q_2^n(t) = Q_2^n(\zeta^{n,k,l}_{2m-1}) + \tilde A^n_2(t)$ and
$Q_3^n(t) = Q_3^n(\zeta^{n,k,l}_{2m-1}) - \tilde S^n_3(t)$.
 Using the nondecreasing and Lipschitz continuity property of $\Psi$, we now have for $t\in [\zeta^{n,k,l}_{2m-1}, \zeta^{n,k,l}_{2m})$,
\begin{align}
\Delta^n(t) & = \frac{Q^n_1(t)}{\mu^n_1}+\frac{Q^n_2(t)}{\mu^n_2} -\sqrt{n} \Psi\left( \frac{ Q^n_2(t)+Q^n_3(t)}{\sqrt{n}\mu^n_3}\right) \nonumber\\
& \ge  \frac{Q^n_2(\zeta^{n,k,l}_{2m-1})+ \tilde A^n_2(t) }{\mu^n_2} -\sqrt{n}\Psi\left( \frac{Q^n_2(\zeta^{n,k,l}_{2m-1})+ \tilde A^n_2(t)+Q^n_3(\zeta^{n,k,l}_{2m-1})- \tilde  S^n_3(t) }{\sqrt{n}\mu^n_3}\right)\nonumber\\
& = \frac{ \tilde A^n_2(t) }{\mu^n_2} - \frac{Q^n_1(\zeta^{n,k,l}_{2m-1})}{\mu^n_1} + \left[\frac{Q^n_1(\zeta^{n,k,l}_{2m-1})}{\mu^n_1} + \frac{Q^n_2(\zeta^{n,k,l}_{2m-1}) }{\mu^n_2} -\sqrt{n} \Psi\left( \frac{Q^n_2(\zeta^{n,k,l}_{2m-1})+Q^n_3(\zeta^{n,k,l}_{2m-1}) }{\sqrt{n}\mu^n_3}\right)\right] \nonumber\\
& \quad  -\sqrt{n}\left[ \Psi\left( \frac{Q^n_2(\zeta^{n,k,l}_{2m-1})+ \tilde A^n_2(t)+Q^n_3(\zeta^{n,k,l}_{2m-1})- \tilde  S^n_3(t) }{\sqrt{n}\mu^n_3}\right)- \Psi\left( \frac{Q^n_2(\zeta^{n,k,l}_{2m-1})+Q^n_3(\zeta^{n,k,l}_{2m-1}) }{\sqrt{n}\mu^n_3}\right) \right]\nonumber\\
& \ge  \frac{ \tilde A^n_2(t) }{\mu^n_2} - \frac{Q^n_1(\zeta^{n,k,l}_{2m-1})}{\mu^n_1}+ \Delta^n(\zeta^{n,k,l}_{2m-1})  - \frac{\mu_3}{\mu^n_3\mu_2} (\tilde A^n_2(t)-\tilde  S^n_3(t))^+. \nonumber 
\end{align}
Using the Lipschitz property of $\Psi$ again we see that there exists $C_0>0$ (depending only on $\Psi$ and the rate
parameters) such that 
\begin{align*}
\Delta^n(\zeta^{n,k,l}_{2m-1}) \ge \Delta^n(\zeta^{n,k,l}_{2m-1}-) - C_0 \ge g_0 - C_0,
\end{align*}
where the last inequality is from \eqref{eq:eq147}.
Furthermore, for all $t\in [\zeta^{n,k,l}_{2m-1}, \zeta^{n,k,l}_{2m})$, $\tilde A^n_2(t) \le Q^n_2(t)$
and (from \eqref{eq:eq858})
$Q^n_1(t) \le \frac{\mu^n_1}{\mu^n_2}(C^n-L^n+2)$.
Thus we have for such $t$ (in fact for all $t\in [\eta^{n,k}_{2l-2}, \eta^{n,k}_{2l-1})$),
\begin{align}
\Delta^n(t)  \ge  - \left|\frac{1}{\mu^n_2}-\frac{\mu_3}{\mu^n_3\mu_2}\right|Q^n_2(t)  -  \frac{1}{\mu^n_2} (\bfc^n-\bfl^n+2) + g_0-C_0. \label{lower-2} 
\end{align}
We have therefore shown that \eqref{eq:eq141} holds for all $t \in [\eta^{n,k}_{2l-2}, \eta^{n,k}_{2l-1})$.

We finally consider the case when $t\in [\eta^{n,k}_{2l-1}, \eta^{n,k}_{2l})$(with a suitable choice of $C_1, C_2$). 
As noted below \eqref{effect-1}, over this interval Server 1 works on Buffer 1 continuously, and 
since, from the definition of $\tau^n_{2k-1}, \tau^n_{2k}$, over this interval $Q_3^n(t) \ge L^n-1$, Server 2 works on Buffer 3 continuously. Let 
\begin{align*}
 \check A^n_2(t)  \doteq A^n_2(t) - A^n_2(\eta^{n,k}_{2l-1}), \;\; \check S^n_3(t)  \doteq S^n_3(t-\eta^{n,k}_{2l-1} + T_3^n(\eta^{n,k}_{2l-1})) - S^n_3(T_3^n(\eta^{n,k}_{2l-1})).
\end{align*}
 Thus again from the monotonicity and Lipschitz continuity property of $\Psi$, we have for $t\in  [\eta^{n,k}_{2l-1}, \eta^{n,k}_{2l}),$
\begin{align*}
\Delta^n(t) & = \frac{Q^n_1(t)}{\mu^n_1}+\frac{Q^n_2(t)}{\mu^n_2} - \sqrt{n}\Psi\left( \frac{ Q^n_2(t)+Q^n_3(t)}{\sqrt{n}\mu^n_3}\right) \\
& =  \frac{Q^n_1(\eta^{n,k}_{2l-1})+ [Q^n_1(t) - Q^n_1(\eta^{n,k}_{2l-1}) ]}{\mu^n_1}+ \frac{Q^n_2(\eta^{n,k}_{2l-1})+ \check A^n_2(t) }{\mu^n_2}  \\
& \quad -\sqrt{n}\Psi\left( \frac{Q^n_2(\eta^{n,k}_{2l-1})+ \check A^n_2(t)+Q^n_3(\eta^{n,k}_{2l-1})- \check S^n_3(t) }{\sqrt{n}\mu^n_3}\right) \\
&= \frac{Q^n_1(t) - Q^n_1(\eta^{n,k}_{2l-1}) }{\mu^n_1}+ \frac{ \check A^n_2(t) }{\mu^n_2} + \left[\frac{Q^n_1(\eta^{n,k}_{2l-1})}{\mu^n_1} + \frac{Q^n_2(\eta^{n,k}_{2l-1}) }{\mu^n_2} - \sqrt{n}\Psi\left( \frac{Q^n_2(\eta^{n,k}_{2l-1})+Q^n_3(\eta^{n,k}_{2l-1}) }{\sqrt{n}\mu^n_3}\right)\right] \\
& \quad  -\left[ \sqrt{n}\Psi\left( \frac{Q^n_2(\eta^{n,k}_{2l-1})+ \check A^n_2(t)+Q^n_3(\eta^{n,k}_{2l-1})- \check S^n_3(t) }{\sqrt{n}\mu^n_3}\right) - \sqrt{n}\Psi\left( \frac{Q^n_2(\eta^{n,k}_{2l-1})+Q^n_3(\eta^{n,k}_{2l-1}) }{\sqrt{n}\mu^n_3}\right)  \right]\\
& \ge   \frac{Q^n_1(t) - Q^n_1(\eta^{n,k}_{2l-1}) }{\mu^n_1}+ \frac{ \check A^n_2(t) }{\mu^n_2} + \Delta^n(\eta^{n,k}_{2l-1}) - \frac{\mu_3}{\mu^n_3\mu_2} (\check A^n_2(t)-\check  S^n_3(t))^+.
\end{align*}
Also, from \eqref{lower-2}, 
\begin{align*}
 \Delta^n(\eta^{n,k}_{2l-1})  \ge \Delta^n(\eta^{n,k}_{2l-1}-) -C_0  \ge  - \left|\frac{1}{\mu^n_2}-\frac{\mu_3}{\mu^n_3\mu_2}\right|Q^n_2(\eta^{n,k}_{2l-1}-)  -  \frac{1}{\mu^n_2} (\bfc^n-\bfl^n+2) + g_0-2C_0,
\end{align*}
and using \eqref{no-effect}
$$
Q^n_1(\eta^{n,k}_{2l-1})  \le Q^n_1(\eta^{n,k}_{2l-1}-) + 1 < \frac{\mu^n_1}{\mu^n_2}(C^n-L^n+2)+1.$$
Furthermore, for all $t\in [\eta^{n,k}_{2l-1}, \eta^{n,k}_{2l})$,
$$\check A^n_2(t)  \le Q^n_2(t), \mbox{ and }
Q^n_2(\eta^{n,k}_{2l-1}-)  \le Q^n_2(\eta^{n,k}_{2l-1}) + 1 \le Q^n_2(t) + 1.
$$
Using the above estimates, we have for $t\in  [\eta^{n,k}_{2l-1}, \eta^{n,k}_{2l}),$
\begin{align}
 \Delta^n(t) 
&\ge 
- \frac{1}{\mu^n_2}(C^n-L^n+2)-1 -  \left|\frac{1}{\mu^n_2}- \frac{\mu_3}{\mu^n_3\mu_2}\right|Q^n_2(t) + \Delta^n(\eta^{n,k}_{2l-1}-)  - C_0 \nonumber \\
& \ge - \frac{2}{\mu^n_2}(C^n-L^n+2)-1 -  \left|\frac{1}{\mu^n_2}- \frac{\mu_3}{\mu^n_3\mu_2}\right| (2Q^n_2(t)+ 1) +g_0 - 2 C_0. \label{lower-3} 
\end{align}
Thus for such $t$, \eqref{eq:eq141} holds with suitable choice of $C_1, C_2$ as well. The result follows. 

\qed

\setcounter{equation}{0}
\appendix
\numberwithin{equation}{section}

\section{One-dimensional Skorohod map}\label{SP}

We recall below the definition and basic properties of the $1$-dimensional Skorohod map \cite{skorohod61}. 
 Recall $\mathcal{D}_1=\{x\in \mathcal{D}([0,\infty): \RR): x(0)\geq 0\}$. 

\begin{definition}[One-dimensional Skorohod Problem (SP)]\label{sp-def}
Let $x\in \mathcal{D}_1$. A pair $(z,y)\in\mathcal{D}([0,\infty): \RR_+)\times \mathcal{D}([0,\infty): \RR_+)$ is a solution of the Skorohod problem for $x$ if the following hold.
\begin{itemize}
\item[\rm (i)] For all $t\geq 0, z(t)=x(t)+y(t)\geq 0$.
\item[\rm (ii)] $y$ satisfies the following: (a) $y(0)=0$, (b) $y$ is nondecreasing, and (c) $y$ increases only when $z=0$, that is, 
$\int_{[0,\infty)} z(t)dy(t) = 0.$
\end{itemize}
\end{definition}
The following proposition summarizes some well known properties of the $1$-dimensional SP (see \cite{chen91} for a proof).

\begin{proposition}\label{sp-property}\hfill
\begin{itemize}
\item[\rm (i)] Let $x\in \mathcal{D}_1$. Then there exists a unique solution $(z,y)\in\mathcal{D}([0,\infty): \RR_+)\times \mathcal{D}([0,\infty): \RR_+)$ of the SP for $x$, which is given as follows:
\bes
y(t) = - \inf_{0\leq s\leq t} (x(s) \wedge 0), \;  \; z(t) = x(t) - \inf_{0\leq s\leq t} (x(s)\wedge 0), \; t \ge 0.
\ees
We write $z = \Gamma(x)$, and refer to the map $\Gamma: \mathcal{D}_1 \to \mathcal{D}([0,\infty): \RR_+)$ as the Skorohod map. Let $\cli: \mathcal{D}([0,\infty): \RR) \to \mathcal{D}([0,\infty): \RR)$ be the identity functional. Then $y = (\Gamma - \cli)(x).$
\item[\rm (ii)] The  Skorohod map $\Gamma$ is Lipschitz continuous in the following sense: For all $t\geq 0$ and $x_1, x_2\in \mathcal{D}_1$, 
$$\sup_{0\leq s\leq t}|\Gamma(x_1)(s)-\Gamma(x_2)(s)|\leq 2 \sup_{0\leq s\leq t}|x_1(s)-x_2(s)|.$$
\item[\rm (iii)] Fix $x\in \mathcal{D}_1$. Let $(z,y)\in\mathcal{D}([0,\infty): \RR_+)\times \mathcal{D}([0,\infty): \RR_+)$ be such that 
\begin{itemize}
\item[\rm (a)] $z(t)=x(t)+y(t)\geq 0, \; t\geq 0$,
\item[\rm (b)] $y$ is nondecreasing with $y(0)=0$.
\end{itemize}
Then  $z(t)\geq \Gamma(x)(t), \; t\geq 0$.
\end{itemize}
\end{proposition}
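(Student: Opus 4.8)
The plan is to prove the three parts in the order (iii) $\Rightarrow$ uniqueness in (i) $\Rightarrow$ existence in (i) $\Rightarrow$ (ii), since the comparison property (iii) is the easiest to get and immediately delivers half of the uniqueness argument.

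\textbf{Part (iii) and uniqueness.} Suppose $(z,y)$ satisfies (a)--(b) of (iii). From $z(s)=x(s)+y(s)\ge 0$ I get $y(s)\ge -x(s)$, and combined with $y(s)\ge y(0)=0$ this gives $y(s)\ge -(x(s)\wedge 0)$ for every $s$. Since $y$ is nondecreasing, $y(t)\ge \sup_{0\le s\le t}\bigl(-(x(s)\wedge 0)\bigr)=-\inf_{0\le s\le t}(x(s)\wedge 0)$, hence $z(t)=x(t)+y(t)\ge x(t)-\inf_{0\le s\le t}(x(s)\wedge 0)=\Gamma(x)(t)$, which is (iii). For uniqueness in (i), any solution $(\tilde z,\tilde y)$ of the SP satisfies this same lower bound $\tilde y(t)\ge -\inf_{0\le s\le t}(x(s)\wedge 0)=:\bar y(t)$; for the reverse inequality I would argue that if $\tilde y(t_0)>\bar y(t_0)$ for some $t_0$, then on the interval after the last time $\tilde y$ meets the envelope $\bar y$ one has $\tilde y(s)>\bar y(s)\ge -x(s)$, so $\tilde z(s)>0$ there, and the complementarity condition forces $\tilde y$ to be constant on that interval, contradicting the strict excess. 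Thus $\tilde y=\bar y$ and $\tilde z=x+\bar y$.

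\textbf{Part (i), existence, and Part (ii).} For existence I would simply verify that the explicit pair $y(t)=-\inf_{0\le s\le t}(x(s)\wedge 0)$, $z(t)=x(t)+y(t)$ solves the SP: $y(0)=0$ because $x(0)\ge 0$; $y$ is nondecreasing and RCLL; $z(t)\ge x(t)-(x(t)\wedge 0)=(x(t))^+\ge 0$; and the complementarity $\int_{[0,\infty)}z\,dy=0$ holds because $dy$ is carried by the times at which $\inf_{0\le s\le t}(x(s)\wedge 0)$ strictly decreases, and at any such time $x(t)=\inf_{0\le s\le t}(x(s)\wedge 0)$, whence $z(t)=0$; the identity $y=(\Gamma-\cli)(x)$ is then immediate. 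For the Lipschitz bound (ii), put $z_i=\Gamma(x_i)$, $y_i=(\Gamma-\cli)(x_i)$, $\delta=\sup_{0\le s\le t}|x_1(s)-x_2(s)|$, fix $s\in[0,t]$ and assume w.l.o.g.\ $z_1(s)\ge z_2(s)$. If $y_1(s)\le y_2(s)$ then $z_1(s)-z_2(s)=(x_1(s)-x_2(s))+(y_1(s)-y_2(s))\le\delta$. Otherwise $y_1(s)>y_2(s)\ge 0$; letting $u\le s$ be the last time $z_1$ vanishes (so that $y_1$ is constant on $[u,s]$ and $y_1(s)=y_1(u)=-x_1(u)$, while $y_2(s)\ge y_2(u)\ge -x_2(u)$), I obtain $z_1(s)-z_2(s)\le (x_1(s)-x_1(u))+(x_2(u)-x_2(s))\le 2\delta$. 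Supremizing over $s\in[0,t]$ gives the stated estimate with constant $2$.

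\textbf{Main obstacle.} There is no deep difficulty in this classical statement; the only points needing care are the RCLL bookkeeping — justifying that the measure $dy$ for the explicit solution is supported on the zero set of $z$ even in the presence of jumps, and that the ``last zero'' $u$ of $z_1$ used in (ii) is actually attained (one must exclude $y_1$ increasing on an interval ending at $s$ without $z_1$ ever touching $0$, and handle jumps of $y_1$). These are routine but slightly technical, so in a write-up I would either include the short measure-theoretic argument or defer to \cite{skorohod61, chen91}.
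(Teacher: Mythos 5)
The paper does not actually prove this proposition; it states it as a summary of well-known facts and refers the reader to Chen and Mandelbaum \cite{chen91} for the proof. So there is no in-text argument to compare against. Your reconstruction is the standard one and, modulo the RCLL bookkeeping you already flag, it is correct: part (iii) follows directly from $y\ge 0$, $y\ge -x$, and monotonicity of $y$; uniqueness combines the lower bound from (iii) with the complementarity condition $\int z\,dy=0$ applied on a maximal interval where the putative solution strictly exceeds the explicit one; existence is a direct check that the explicit $y$ has its Stieltjes measure carried on $\{z=0\}$; and the Lipschitz constant $2$ drops out of the comparison at the last zero of $z_1$ (or, more carefully, a point where the running infimum defining $y_1$ is attained, possibly as a left limit). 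One small point worth tightening in a full write-up of the uniqueness step: when you take $u$ to be the last time $\tilde y$ meets $\bar y$ before $t_0$, you should observe that if $\tilde y(u)>\bar y(u)$ with $\tilde y(u-)=\bar y(u-)$, then $\tilde z(u)=x(u)+\tilde y(u)>x(u)+\bar y(u)\ge 0$, so complementarity forbids a jump of $\tilde y$ at $u$ and forces $\tilde y(u)=\tilde y(u-)=\bar y(u-)\le\bar y(u)$, a contradiction — i.e., the supremum is in fact attained. With that remark the argument is complete and matches the standard proof one would find in the cited reference.
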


\section*{Acknowledgements}

This work  has been supported in part by the National Science Foundation(DMS-1016441, DMS-1305120) and the Army Research
 Office (W911NF-10-1-0158, W911NF-14-1-0331).

\providecommand{\bysame}{\leavevmode\hbox to3em{\hrulefill}\thinspace}
\providecommand{\MR}{\relax\ifhmode\unskip\space\fi MR }
\providecommand{\MRhref}[2]{%
  \href{http://www.ams.org/mathscinet-getitem?mr=#1}{#2}
}
\providecommand{\href}[2]{#2}


\skp

{\sc

\bigskip\noi
Amarjit Budhiraja\\
Department of Statistics and Operations Research\\
University of North Carolina\\
Chapel Hill, NC 27599, USA\\
email: budhiraj@email.unc.edu

\skp

\noi
Xin Liu\\
Department of Mathematical Sciences\\
    Clemson University\\
Clemson, SC 29634, USA\\
email:    xliu9@clemson.edu.

\skp

\noi
Subhamay Saha\\
Department of Electrical Engineering\\
Technion - Israel Institute of Technology\\
Haifa 32000, Israel\\
email: subhamay@tx.technion.ac.il

}

\end{document}